\newcommand{\ud}{\textrm{d}}
\newcommand{\Uad}{\mathcal{U}_{\textrm{ad}}}
\newtheorem{definition}{Definition}[section]
\newtheorem{lemma}{Lemma}[section]
\newtheorem{remark}{Remark}[section]
\newtheorem{theorem}{Theorem}[section]
\definecolor{darkgreen}{RGB}{50, 200, 50}
\newcommand{\rev}[1]{{\color{black}#1\color{black}}} 
\newcommand{\reva}[1]{{\color{black}#1\color{black}}} 
\newcommand{\revb}[1]{{\color{black}#1\color{black}}} 
\newcommand{\revc}[1]{{\color{black}#1\color{black}}} 
\newcommand{\norev}[1]{{\color{black}#1}}
\newcommand{\mb}[1]{\boldsymbol{#1}}
\newcommand{\p}{\partial}
\newcommand{\timeavgflux}[1]{{#1}^{\text{LW}}}
\newcommand{\iniguessflux}[1]{{#1}^{\text{IG}}}
\title{Bound Preserving Lax-Wendroff Flux Reconstruction Method for Special Relativistic Hydrodynamics}
\author{
Sujoy Basak \orcidlink{0009-0009-0612-6361}\thanks{Corresponding author}\\
Department of Mathematics\\
Indian Institute of Technology Delhi\\
New Delhi -- 110016, India\\
\texttt{sujoybasak42@gmail.com} \\
\And
Arpit Babbar \orcidlink{0000-0002-9453-370X} \\
Centre for Applicable Mathematics\\
Tata Institute of Fundamental Research\\
Bangalore -- 560065, India\\
\texttt{arpit@tifrbng.res.in} \\
\And
Harish Kumar \orcidlink{0000-0003-4746-2336}\\
Department of Mathematics\\
Indian Institute of Technology Delhi\\
New Delhi -- 110016, India\\
\texttt{hkumar@iitd.ac.in} \\
\And
Praveen Chandrashekar \orcidlink{0000-0003-1903-4107}\\
Centre for Applicable Mathematics\\
Tata Institute of Fundamental Research\\
Bangalore -- 560065, India\\
\texttt{praveen@math.tifrbng.res.in}
}
\begin{document}
\maketitle
\begin{abstract}
Lax-Wendroff flux reconstruction (LWFR) schemes have high order of accuracy in both space and time despite having a single internal time step. Here, we design a Jacobian-free LWFR type scheme to solve the special relativistic hydrodynamics equations \revc{on Cartesian grids}. We then blend the scheme with a first-order finite volume scheme to control the oscillations near discontinuities. We also use a scaling limiter to preserve the physical admissibility of the solution after ensuring the scheme is admissible in means. A particular focus is given to designing a discontinuity indicator model to detect the local non-smoothness in the solution of the highly non-linear relativistic hydrodynamics equations. Finally, we present \rev{numerical results for a wide range of test cases to show the robustness and efficiency of the proposed scheme}.
\end{abstract}
\keywords{Relativistic hydrodynamics \and Lax-Wendroff flux reconstruction \and Admissibility preservation}
\section{Introduction}
The primary concern of this paper is to design a high-order, accurate, stable numerical method for special relativistic hydrodynamics (RHD) equations in one and two dimensions with the ideal equation of state. This system of equations belongs to the class of hyperbolic conservation laws. Often in high energy physics and astrophysics, for the investigation of astrophysical phenomena, one should take into account the behavior of the fluid when it approaches speed close to that of light or the effects of strong gravitational potentials, and hence, the relativistic description is necessary. Some examples are the formation of black holes, X-ray binaries, gamma-ray bursts, super-luminal jets, and active galactic nuclei~\cite{begelman1984theory,bottcher2012relativistic,mirabel1999sources,zensus1997parsec}.

Due to the relativistic influences, the non-linearity of the RHD equations is much stronger in the form of the Lorentz factor compared to that of the non-relativistic case, making it extremely difficult to treat analytically. Hence, we need to rely on numerical simulations for understanding the physical mechanisms of RHD equations in various applications. Some popular numerical schemes for hyperbolic partial differential equations are based on the Godunov approach, where the numerical fluxes on the cell interfaces are approximated by solving a Riemann problem and are then used in finite volume or discontinuous Galerkin (DG) methods. For the time update, suitable Runge-Kutta methods are used.
 
One of the first attempts to solve the RHD equations numerically was done by Wilson using an explicit finite difference method with the artificial viscosity technique for shock capturing in 1972~\cite{wilson1972numerical}, but it is not very accurate for cases having a value of Lorentz factor greater than 2~\cite{centrella1984planar}.
Following that, the numerical study of RHD equations did not attract much attention until the 1990s when various modern methods for capturing the shocks were developed with an approximate or exact Riemann solver~\cite{marti1991numerical,marti1994analytical,BALSARA1994,dai1997iterative,ibanez1999riemann}. Besides those, many higher-order methods are also designed, for example the piece-wise parabolic reconstruction method~\cite{marti1996extension,aloy1999genesis,mignone2005piecewise}, ENO (essentially non-oscillatory) and WENO (weighted ENO) methods~\cite{dolezal1995relativistic,del2002efficient,tchekhovskoy2007wham}, the discontinuous Galerkin method~\cite{radice2011discontinuous}, the RKDG (Runge-Kutta DG) method with WENO limiter~\cite{zhao2013runge}. In~\cite{font2008numerical}, the authors have conducted an examination of numerous numerical methods, analyzing how well they perform across a diverse set of problems.

For the solution of the RHD equations to be physically meaningful, we need the density and pressure to be positive and the absolute value of the velocity to be less than unity (assuming the speed of light to be unity), and this type of solution is said to be admissible. However, most of the above existing methods do not preserve the admissibility of the solution, even though they have been used to solve RHD equations in many cases successfully. Consequently, these methods can fail when used to solve RHD equations with low pressure or \rev{low} density or having fluid velocity near unity or with strong discontinuity, because the moment the solution gets out of the admissibility region, the Jacobian matrix of the flux function \revb{has imaginary} eigenvalues resulting in the ill-posedness of the problem. Often, this type of nonphysical solution is recalculated using very small CFL numbers and more diffusive methods until the solution becomes physically meaningful~\cite{zhang2006ram,hughes2002three}. This approach lacks scientific validity to some extent, thereby showing the importance of advanced high-order accurate numerical methods that produce solutions consistent with the fundamental physical constraints. In the last decade, there has been significant development in the bound-preserving numerical schemes for the RHD equations. Several high-order admissibility preserving schemes have been developed in~\cite{wu2015high,wu2016physical,qin2016bound,wu2017design} by analyzing the properties of the admissible set. Most of these schemes are based on the idea of using a scaling limiter on the solution polynomial evolved in the next time step or the flux-corrected limiting method. The primary technical challenge that arises in these methods is the non-availability of an explicit expression for primitive variables and flux function in terms of conservative variables, requiring us to solve a non-linear equation~\cite{schneider1993new,bhoriya2020entropy}.

Apart from the admissibility criteria, we also need to ensure the efficiency of the numerical schemes. Usually, for time updates when Runge-Kutta methods are used, each internal time step involves a transfer of data between different cores (in parallel implementation), which is the critical bottleneck for the performance, especially in the modern computational architecture involving GPUs. In~\cite{LW1}, the authors have described the Lax-Wendroff method, which requires only a single step to evolve in time for a second-order scheme, unlike the Runge-Kutta methods. Although the original method is unstable for the non-linear systems, in~\cite{LWDG1, LWDG2}, the authors have proposed a new scheme with arbitrary order of accuracy by discretizing the space using DG method and discretizing the time using the Lax-Wendroff method (typically denoted as LWDG). In~\cite{lou2020flux}, the authors have combined the Lax-Wendroff method with the flux reconstruction method, initially proposed in~\cite{huynh2007flux} by creating a continuous estimate of the flux utilizing numerical flux at the boundaries of cells and using a correction function. The quadrature-free computationally efficient flux reconstruction method is ideal for use in modern vector processors~\cite{vincent2016towards,lopez2014verification,vandenhoeck2019implicit}. However, the method described in~\cite{lou2020flux} is computationally expensive because it uses the traditional way of computing the flux derivatives by the chain rule of differentiation~\cite{burger2017approximate}. To overcome this issue, recently in~\cite{BABBAR2022111423}, the authors have given a Jacobian-free modified version of the LWFR method using the approximate Lax-Wendroff procedure~\cite{zorio2017approximate},
which requires significantly less number of computations.

The aim of this paper is to design a high-order LWFR method with an effective discontinuity indicator model for the RHD equations, which preserves the admissibility of the solution while minimizing the loss of accuracy.
In Section~\ref{sec: RHD}, we discuss some properties of the RHD equations that are necessary for the implementation of the scheme. Following the work in~\cite{BABBAR2022111423}, we design a higher-order LWFR method for the RHD equations in Section~\ref{sec:LWFR}. In Section~\ref{sec: oscillation_control}, we discuss about blending the scheme with a first-order finite volume scheme to control the oscillations near discontinuities with the help of a discontinuity indicator model designed in Section~\ref{sec:discontinuity_indicator}. In Section~\ref{sec: admissibility}, we discuss the admissibility region of the RHD equations and present a detailed process of making our scheme admissibility preserving following~\cite{babbar2024admissibility} and changing the admissibility region according to the framework.
We verify the efficiency and robustness of the scheme with several numerical simulations in one and two dimensions in Section~\ref{sec: Numerical_verifications}. Lastly, we summarise our work in Section~\ref{sec: summary}.

\section{Properties of the relativistic hydrodynamics equations} \label{sec: RHD}
In the context of an ideal fluid, one can express the $d$-dimensional (in space) relativistic hydrodynamics equations in the laboratory frame of reference as a set of conservation laws governing mass density, momentum density, and total energy density, which take the following form~\cite{anile2005relativistic,landau1987see,synge1965relativity}:
\begin{align}\label{RHD_equation}
    \frac{\p \mb{u}}{\p t}+\sum_{i=1}^d\frac{\p \mb{f}_i(\mb{u})}{\p x_i}=0,
\end{align}
where $\mb{u}$ is the vector of conserved variables and $\mb{f}_i$ is the flux vector in $x_i$ direction. Specifically, these vectors are defined by,
\begin{align}
\begin{split}
    \mb{u}&=(D, m_1,m_2,\dots,m_d, E)^\top,\\
    \mb{f}_i(\mb{u})&=(Dv_i, m_1v_i+p\delta_{1,i}, m_2v_i+p\delta_{2,i},\dots, m_dv_i+p\delta_{d,i}, m_i)^\top,\ \ \ i=1,\dots, d,
\end{split}
\end{align}
where the laboratory quantities, $D=\rho\Gamma$ is the relativistic density, $\mb{m}=(m_1,m_2,\dots,m_d)^\top$ is the momentum density vector and $E$ is the energy density given by $E={\rho h\Gamma^2}-p$. Here the primitive variables are the rest-mass density $\rho$, kinetic pressure $p$ and the fluid velocity $\mb{v}=(v_1, \dots, v_d)^\top$. The conservative variable $\mb{m}$ is expressed in terms of the primitive variables as $\mb{m}=\rho h \mb{v}\Gamma^2$. \revb{We adopt a scaling of the unknowns in which the speed of light becomes one;} then the Lorentz factor is given by $\Gamma=\frac{1}{\sqrt{1-|\mb{v}|^2}}$. The above equations can be closed using the equation of state for an ideal gas,
\begin{equation}\label{eq: eos}
    h=1+\frac{\gamma}{\gamma-1}\frac{p}{\rho},
\end{equation}
where $h$, $\gamma$ are the specific enthalpy and the specific heat ratio (also called adiabatic index) respectively, and under the compressibility assumptions we take $\gamma \in (1,2]$~\cite{cissoko1992detonation}. \revb{For the solution of the RHD equations to be physically meaningful, we need  the density and pressure to be positive and the magnitude of fluid velocity to be less than one, the speed of light; thus the solution $\mb{u}$ should belong to the admissible set,
\begin{equation}
\Uad =\{\mb{u}=(D,\mb{m},E)^\top: \rho (\mb{u}) >0, p(\mb{u})>0, |\mb{v}(\mb{u})|<1\}.
\end{equation}
Then we have the following result regarding hyperbolicity of the RHD equations.
\begin{lemma}
The system of RHD equations is hyperbolic when $\mb{u}$ belongs to the set $\Uad$. Thus, it has real eigenvalues with a complete set of corresponding eigenvectors~\cite{anile2005relativistic,ryu2006equation}.
\end{lemma}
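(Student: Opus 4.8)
The plan is to establish hyperbolicity by showing that, for every unit direction $\mb{n}=(n_1,\dots,n_d)^\top$, the pencil matrix $A(\mb{n})=\sum_{i=1}^d n_i\,\p\mb{f}_i/\p\mb{u}$ has only real eigenvalues and a complete set of eigenvectors on $\Uad$. Working directly in the conserved variables $\mb{u}$ is awkward, since $\mb{f}_i$ depends on $\mb{u}$ only implicitly through the primitive variables $\mb{w}=(\rho,\mb{v},p)^\top$, so first I would pass to primitive variables. Writing $M=\p\mb{u}/\p\mb{w}$ and $B(\mb{n})=\sum_i n_i\,\p\mb{f}_i/\p\mb{w}$, the chain rule gives $A(\mb{n})=B(\mb{n})M^{-1}$, and since $M^{-1}(BM^{-1})M=M^{-1}B$, the eigenvalues of $A(\mb{n})$ coincide with those of $M^{-1}B(\mb{n})$, i.e.\ with the characteristic speeds of the quasilinear primitive-variable form. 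The first substantive step is therefore to check that $M$ is nonsingular on $\Uad$: the map $\mb{w}\mapsto\mb{u}$ is smooth, and using $\rho>0$, $p>0$ and the finiteness of $\Gamma$ (guaranteed by $|\mb{v}|<1$) one verifies $\det M\neq 0$, so the change of variables is legitimate throughout $\Uad$.

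Next I would compute the characteristic speeds. By the rotational invariance of the RHD system it suffices to treat the direction $\mb{n}=\mb{e}_1$, so I analyze $M^{-1}\,\p\mb{f}_1/\p\mb{w}$. The eigenvalues are the standard relativistic wave speeds: a material eigenvalue $\lambda_0=v_1$ of multiplicity $d$ (entropy and transverse-velocity modes) together with the two acoustic speeds
\begin{equation*}
\lambda_\pm=\frac{v_1(1-c_s^2)\pm c_s\sqrt{(1-|\mb{v}|^2)\bigl(1-|\mb{v}|^2 c_s^2-v_1^2(1-c_s^2)\bigr)}}{1-|\mb{v}|^2 c_s^2},
\end{equation*}
where $c_s$ is the sound speed. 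Reality of $\lambda_\pm$ reduces to nonnegativity of the radicand, which holds whenever $c_s^2<1$ and $|\mb{v}|<1$; in that case the denominator $1-|\mb{v}|^2c_s^2$ is strictly positive as well, and in fact the radicand is bounded below by $1-|\mb{v}|^2>0$.

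It thus remains to confirm that the sound speed is subluminal. For the ideal-gas equation of state \eqref{eq: eos} the relativistic sound speed satisfies $c_s^2=\gamma p/(\rho h)$, and using $\rho h=\rho+\frac{\gamma}{\gamma-1}p$ the inequality $c_s^2<1$ is equivalent to $\gamma\,\frac{\gamma-2}{\gamma-1}\,p<\rho$. Since $\gamma\in(1,2]$ the left-hand side is nonpositive while $\rho>0$, so $c_s^2<1$ holds on all of $\Uad$; this is precisely the role played by the compressibility restriction $\gamma\le 2$. Combined with $|\mb{v}|<1$, this yields real eigenvalues everywhere on $\Uad$.

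Finally, I would verify completeness of the eigenvectors. The acoustic fields $\lambda_\pm$ are simple and their eigenvectors are immediate, so the only delicate point — and the main obstacle — is the degenerate material eigenvalue $\lambda_0=v_1$ of multiplicity $d$: I must exhibit $d$ linearly independent eigenvectors, so that its geometric multiplicity matches its algebraic multiplicity. Concretely one constructs one eigenvector from the entropy/density mode (varying $\rho$ at fixed $p$ and $\mb{v}$) and $d-1$ from the transverse velocity components, then pulls these back through $M$ to the conserved variables and checks their independence. The edge cases where $\lambda_0$ coincides with $\lambda_\pm$ occur only on a lower-dimensional set and can be handled by continuity, completing the diagonalizability argument and hence establishing hyperbolicity on $\Uad$.
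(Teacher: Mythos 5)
The paper itself gives no argument for this lemma: it is stated as a known result and deferred to~\cite{anile2005relativistic,ryu2006equation}, so your proposal has to be measured against the standard proof in those references, which it essentially reconstructs — pass to primitive variables via $M=\p\mb{u}/\p\mb{w}$, use the similarity of $A(\mb{n})=B(\mb{n})M^{-1}$ with $M^{-1}B(\mb{n})$, exhibit the material and acoustic wave speeds, and reduce reality to $c_s^2<1$, which for the ideal equation of state~\eqref{eq: eos} with $\gamma\in(1,2]$ follows exactly as you argue (equivalently, the paper's formula~\eqref{eq: sound_speed1} gives $c_s^2<\gamma-1\le 1$ at once). Those steps are sound. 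One small slip: the radicand $R=(1-|\mb{v}|^2)\bigl(1-|\mb{v}|^2c_s^2-v_1^2(1-c_s^2)\bigr)$ satisfies $R\ge(1-|\mb{v}|^2)^2$, not $R\ge 1-|\mb{v}|^2$ (the latter quantity is in fact an \emph{upper} bound for $R$); strict positivity still holds, which is all you need.

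The genuine problem is your last step. Appealing to continuity cannot give completeness of eigenvectors at points where $\lambda_0=v_1$ would coincide with $\lambda_\pm$: diagonalizability is not a closed condition — Jordan blocks arise precisely as limits of diagonalizable matrices — so knowing the matrix has a full eigenvector set off a lower-dimensional exceptional set says nothing about what happens on it. Fortunately the exceptional set is empty on $\Uad$, but that must be proved, not waved away: $\lambda_\pm=v_1$ forces $\pm c_s\sqrt{R}=v_1c_s^2(1-|\mb{v}|^2)$, and squaring and simplifying yields $(1-v_1^2)(1-|\mb{v}|^2c_s^2)=0$, which is impossible when $|\mb{v}|<1$ and $c_s<1$. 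With that computation inserted, $v_1$ has algebraic multiplicity exactly $d$ everywhere on $\Uad$, the acoustic eigenvalues are simple, and your construction of $d$ independent eigenvectors for the material mode (one entropy mode, $d-1$ transverse shear modes), pulled back through the nonsingular $M$, finishes the proof; the continuity sentence should simply be deleted. The explicit eigenvector lists you would need to write down are the ones in~\cite{ryu2006equation,bhoriya2020entropy}, as the paper notes immediately after the lemma.
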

The expressions of the eigenvalues and eigenvectors are available in~\cite{ryu2006equation,bhoriya2020entropy}.

Given a vector $\mb{u}$, it is expensive to check if it belongs to $\Uad$ since we have to first convert it to primitive variables. Moreover, some numerical methods that enforce admissibility make use of the fact that the functions $\rho(\mb{u})$, $p(\mb{u})$ are concave, but this is not the case for pressure in the RHD model. In~\cite{wu2015high}, an equivalent characterization of the admissible solution set is given in terms of the set,
\begin{equation} \label{eq:uad.defn}
\Uad' = \{\mb{u}=(D,\mb{m},E)^\top: D >0, q(\mb{u}):= E-\sqrt{D^2 + |\mb{m}|^2}>0\}
\end{equation}
and it is shown that,
\begin{lemma} \label{equivalent_admissibility_region}
The sets $\Uad$ and $\Uad'$ are equal. Moreover, both sets are convex.
\end{lemma}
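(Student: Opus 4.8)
The plan is to prove the set equality by a double inclusion and then to read off convexity directly from the explicit description of $\Uad'$. First I would establish $\Uad \subseteq \Uad'$ by direct computation. Writing $\rho h = \rho + \tfrac{\gamma}{\gamma-1}p$ from the equation of state~\eqref{eq: eos} and using the identity $\Gamma^2(1-|\mb{v}|^2)=1$, one checks that $D=\rho\Gamma>0$ is immediate and that
\begin{equation*}
E^2 - D^2 - |\mb{m}|^2 = \Gamma^2\!\left[2\Big(\tfrac{\gamma}{\gamma-1}-1\Big)\rho p + \tfrac{\gamma}{\gamma-1}\Big(\tfrac{\gamma}{\gamma-1}-2\Big)p^2\right] + p^2 .
\end{equation*}
Since $\gamma\in(1,2]$ gives $\tfrac{\gamma}{\gamma-1}\ge 2$, every bracketed term is nonnegative and the trailing $p^2$ is positive, so $E^2>D^2+|\mb{m}|^2$; combined with $E=\rho h\Gamma^2-p\ge \rho+(\tfrac{\gamma}{\gamma-1}-1)p>0$ this yields $q(\mb{u})=E-\sqrt{D^2+|\mb{m}|^2}>0$, i.e.\ $\mb{u}\in\Uad'$.

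For the reverse inclusion $\Uad'\subseteq\Uad$, I would fix $\mb{u}$ with $D>0$ and $q(\mb{u})>0$ and explicitly recover admissible primitive variables. From $\mb{m}=(E+p)\mb{v}$ one gets $\mb{v}=\mb{m}/(E+p)$, and since $q>0$ forces $E>\sqrt{D^2+|\mb{m}|^2}\ge|\mb{m}|$, the bound $|\mb{v}|=|\mb{m}|/(E+p)<1$ holds automatically for every $p>0$; the recovered $\rho = D\sqrt{(E+p)^2-|\mb{m}|^2}/(E+p)$ is then positive as well, and by construction $D=\rho\Gamma$ and $\mb{m}=\rho h\Gamma^2\mb{v}$. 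It therefore remains only to produce a positive pressure. Substituting these relations into $E+p=\rho h\Gamma^2$ reduces the problem to a single scalar equation $\phi(W)=0$ in the variable $W:=E+p$, where
\begin{equation*}
\phi(W) = \frac{D}{\sqrt{W^2-|\mb{m}|^2}} + \frac{\tfrac{\gamma}{\gamma-1}\,(W-E)\,W}{W^2-|\mb{m}|^2} - 1 .
\end{equation*}
The condition $q>0$ is equivalent to $\sqrt{E^2-|\mb{m}|^2}>D$, which makes $\phi(E)<0$ (the second term vanishes at $W=E$), while $\phi(W)\to\tfrac{\gamma}{\gamma-1}-1>0$ as $W\to\infty$. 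By the intermediate value theorem there is a root $W^\ast>E$, hence a pressure $p=W^\ast-E>0$, and the associated triple $(\rho,p,\mb{v})$ is admissible, giving $\mb{u}\in\Uad$.

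Finally, convexity is read off from the equivalent description: $\sqrt{D^2+|\mb{m}|^2}$ is the Euclidean norm of $(D,\mb{m})$ and hence a convex function of $\mb{u}$, so $q(\mb{u})=E-\sqrt{D^2+|\mb{m}|^2}$ is concave (a linear functional minus a convex function) and its superlevel set $\{q>0\}$ is convex; intersecting with the half-space $\{D>0\}$ preserves convexity, and by the equality just proved $\Uad=\Uad'$ is convex. I expect the reverse inclusion to be the main obstacle: the delicate point is guaranteeing that the recovered pressure is \emph{strictly positive}, which rests entirely on the equivalence $q>0\iff\sqrt{E^2-|\mb{m}|^2}>D$ used to sign $\phi(E)$. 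If one additionally wants the primitive map $\mb{u}\mapsto(\rho,p,\mb{v})$ to be single-valued, uniqueness of the root $W^\ast$ must be argued from the monotonicity of $\phi$, whose derivative mixes a negative and a positive term and so requires a somewhat fiddly sign analysis; this step, while standard, is where the real work lies.
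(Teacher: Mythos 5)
Your proof is correct, but it differs from the paper in an unusual way: the paper's entire proof of this lemma is the citation ``See~\cite{wu2015high},'' so you have supplied the argument that the paper omits. Checking your steps: the forward inclusion rests on the identity $E^2-D^2-|\mb{m}|^2=\Gamma^2\big[2\big(\tfrac{\gamma}{\gamma-1}-1\big)\rho p+\tfrac{\gamma}{\gamma-1}\big(\tfrac{\gamma}{\gamma-1}-2\big)p^2\big]+p^2$, which is algebraically right (expand $(\rho h)^2-2\rho h p-\rho^2$ with $\rho h=\rho+\tfrac{\gamma}{\gamma-1}p$ and use $\Gamma^2(1-|\mb{v}|^2)=1$), and it makes explicit where the compressibility assumption $\gamma\in(1,2]$, i.e.\ $\tfrac{\gamma}{\gamma-1}\ge 2$, enters; together with $E\ge\rho+\tfrac{1}{\gamma-1}p>0$ this yields $q>0$. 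The reverse inclusion by the intermediate value theorem is also sound: $q>0$ and $D>0$ give $E>|\mb{m}|$, so $\phi$ is continuous on $[E,\infty)$, $\phi(E)=D/\sqrt{E^2-|\mb{m}|^2}-1<0$ precisely because $E^2-|\mb{m}|^2>D^2$, and $\phi(W)\to\tfrac{1}{\gamma-1}>0$ as $W\to\infty$, so a root $W^\ast>E$ exists and the resulting $(\rho,p,\mb{v})$ is admissible and maps back exactly to $(D,\mb{m},E)$. Your convexity argument is the same fact the paper isolates as Lemma~\ref{q_concave} (concavity of $q$, proved there via the Minkowski inequality); phrasing it as a linear functional minus a Euclidean norm is equivalent and arguably cleaner. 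One clarification on your closing worry: uniqueness of the root $W^\ast$ is \emph{not} needed for the set equality --- for $\Uad'\subseteq\Uad$ it suffices that \emph{some} admissible recovery exists, which your argument provides; uniqueness only matters for the well-definedness of the paper's notation $\rho(\mb{u})$, $p(\mb{u})$, $\mb{v}(\mb{u})$, and that standard monotonicity fact is part of what the cited reference~\cite{wu2015high} establishes. So the step you identify as ``where the real work lies'' is dispensable for this lemma. As for what each approach buys: the paper's citation is economical and defers to an established result, while your self-contained proof exposes the mechanism, in particular the precise role of $q>0$ in signing $\phi(E)$ and of the bound on $\gamma$ in the forward inclusion.
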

\revc{\begin{proof}
See \cite{wu2015high}.
\end{proof}}

 This new admissible set has constraints that are directly computable from the conservative quantities; it does not require converting to primitive variables which makes it more efficient. Moreover, as we show below, the function $q(\mb{u})$ is concave which makes it easy to enforce the admissibility of the element means of the solution in the numerical scheme. Specifically, it helps in proving the admissibility of the low-order evolutions in Step 4 in Section~\ref{sec: admissibility} (see proof of Theorem~\ref{admissibility_theorem_fc}). Once we have the admissibility of the element means of the solution, concavity of the admissibility constraints again plays an important role in applying the scaling limiter from~\cite{zhang2010maximum}, the key idea for this is that we have a scheme which gives admissible element means of the solution, $\bar{\mb{u}}$; if the solution at some point $\mb{u}_{ij}$ in the element is not admissible, i.e., $q(\mb{u}_{ij}) \le 0$ or $D(\mb{u}_{ij}) \le 0$, then we push it closer to $\bar{\mb{u}}$, i.e., reset $\mb{u}_{ij} \leftarrow (1-\theta) \bar{\mb{u}} + \theta \mb{u}_{ij}$ for some $\theta \in [0,1]$. Since $q$ is concave, $q((1-\theta) \bar{\mb{u}} + \theta \mb{u}_{ij}) \ge (1-\theta) q(\bar{\mb{u}}) + \theta q(\mb{u}_{ij})$ and since $q(\bar{\mb{u}}) > 0$, the right hand side can be made non-negative by choosing $\theta$ small enough. A similar argument can be used for $D$ as well, since $D$ can also be considered as a concave function of the conservative variables.
 }
\begin{lemma} \label{q_concave}
    The function $q(\mb{u})$ is concave.
\end{lemma}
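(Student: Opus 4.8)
The plan is to exploit the explicit structure
\begin{equation*}
q(\mb{u}) = E - \sqrt{D^2 + |\mb{m}|^2},
\end{equation*}
in which the first term is linear in $\mb{u}$ (hence simultaneously convex and concave) and the second is the negative of the Euclidean norm of the vector $z := (D, \mb{m})^\top \in \re^{d+1}$. Since the sum of a linear function and a concave function is concave, it suffices to show that $g(\mb{u}) := \sqrt{D^2 + |\mb{m}|^2}$ is convex; then $-g$ is concave and $q = E - g$ is concave.

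First I would observe that $g$ depends on $\mb{u}$ only through the linear map $\mb{u} \mapsto z = (D,\mb{m})^\top$ (which simply discards the $E$-coordinate), and that $g = \|z\|_2$. Convexity of any norm follows directly from absolute homogeneity and the triangle inequality: for $z_1, z_2 \in \re^{d+1}$ and $\theta \in [0,1]$,
\begin{equation*}
\|\theta z_1 + (1-\theta) z_2\| \le \|\theta z_1\| + \|(1-\theta) z_2\| = \theta \|z_1\| + (1-\theta)\|z_2\|.
\end{equation*}
Because composing a convex function with an affine map preserves convexity, $g$ is convex on all of $\re^{d+2}$, and therefore $q$ is concave. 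The chief virtue of this route is that it needs no differentiability and handles the point $z=0$ with no special treatment.

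Alternatively, on the open set $\{z \ne 0\}$ (which contains $\Uad'$, since there $D>0$) I would compute the Hessian of $r := \|z\|$ directly, obtaining $\tfrac{1}{r}\left(I - \tfrac{z z^\top}{r^2}\right)$, namely $\tfrac{1}{r}$ times the orthogonal projection onto the complement of $z$. This matrix is positive semidefinite, so $r$ is convex and the Hessian of $q$ (the negative of this block, padded with zeros in the $E$-direction) is negative semidefinite, again giving concavity.

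I do not anticipate a genuine obstacle: the only delicate point is the non-smoothness of the norm at $z = 0$, which the norm-based argument sidesteps completely and which in any case lies outside the admissible region of interest.
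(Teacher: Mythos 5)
Your proposal is correct and is essentially the same argument as the paper's: the paper's key step is the Minkowski inequality applied to the Euclidean norm of $(D,\mb{m})$, which is exactly the triangle-inequality convexity of the norm that you invoke, with $E$ entering linearly in both proofs. You merely package it in convex-analysis language (norm convexity, affine precomposition, linear plus concave), while the paper writes out the same chain of inequalities explicitly.
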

\begin{proof}
    Let, $\lambda\in [0,1]$ and $\mb{u}_a=(D_a, \mb{m}_a, E_a)$, $\mb{u}_b=(D_b, \mb{m}_b, E_b)$ be two points in $\Uad$.

    Now introducing the notation $(\cdot)_\lambda$ to denote $\lambda (\cdot)_a + (1-\lambda) (\cdot)_b$ we have,
    \begin{align*}
    q(\lambda \mb{u}_a + (1-\lambda) \mb{u}_b)
    &= E_\lambda - \sqrt{D_\lambda^2+|\mb{m}_\lambda|^2}\\
    &= E_\lambda - \sqrt{\big(\lambda D_a + (1-\lambda) D_b\big)^2 + \sum\limits_{i=1}^d \big[\lambda (m_i)_a + (1-\lambda) (m_i)_b]^2}\\
    &\geq E_\lambda - \lambda \sqrt{D_a^2+\sum\limits_{i=1}^d (m_i)_a^2}-(1-\lambda)\sqrt{D_b^2+\sum\limits_{i=1}^d (m_i)_b^2}\ , \hspace{1cm} \text{by Minkowski inequality}\\
    &= \lambda \Bigg(E_a- \sqrt{D_a^2+\sum\limits_{i=1}^d (m_i)_a^2}\Bigg) + (1-\lambda) \Bigg(E_b- \sqrt{D_b^2+\sum\limits_{i=1}^d (m_i)_a^2}\Bigg)\\
    &= \lambda q(\mb{u}_a) + (1-\lambda) q(\mb{u}_b).
    \end{align*}
    Hence, $q(\mb{u})$ is a concave function.
\end{proof}

\section{Lax-Wendroff flux-reconstruction scheme}\label{sec:LWFR}
Let us present the numerical scheme for two-dimensional RHD equations, which can easily be reduced to one dimension and generalized to higher dimensions. Without loss of generality, consider a computational domain $\Omega = [x_a,x_b]\times [y_a,y_b]$ that has been discretized into disjoint elements $\Omega_{pq} = [x_{p-\frac{1}{2}}, x_{p+\frac{1}{2}}]\times [y_{q-\frac{1}{2}}, y_{q+\frac{1}{2}}]$ and let the length of the intervals be given by $\Delta x_p=x_{p+\frac{1}{2}}-x_{p-\frac{1}{2}}$ and $\Delta y_q=y_{q+\frac{1}{2}}-y_{q-\frac{1}{2}}$.

We take the reference element to be $[0,1]\times [0,1]$, and the corresponding function to map a physical element to the reference element is given by,
\[
(x,y) \to (\xi, \eta) =\bigg(\frac{x-x_{p-\frac{1}{2}}}{\Delta x_p}, \frac{y-y_{q-\frac{1}{2}}}{\Delta y_q}\bigg). 
\]
To approximate the solution inside each element with a degree-$N$ polynomial in each direction, we take $(N+1)\times (N+1)$ solution points inside the reference element, $(\xi_i,\eta_j)$ where $i,j=0,1,\dots,N$ with $0\leq \xi_0< \xi_1 \cdots < \xi_N\leq 1$ and $0\leq \eta_0< \eta_1 \cdots < \eta_N\leq 1$. We will take these points as the Gauss-Legendre nodes, which give exact results in the quadrature rule for any polynomial of degree at most $2N+1$ with the corresponding Gauss-Legendre weights.

The solution inside an element will be given in reference coordinates as,
\[
\mb{u}_h (\xi,\eta, t) = \sum^N_{i,j=0} \mb{u}^e_{ij}(t) \ell_i(\xi)\ell_j(\eta),
\]
where $\ell_i$'s are the Lagrange polynomials of degree $N$ and $\mb{u}^e_{ij}$'s are the unknowns we need to find, called the degrees of freedom. They are the solution values at the solution points inside an element $\Omega_e=\Omega_{pq}$.

By formally expanding the solution, $\mb{u}^e(t)$ around $t=t_n$ using Taylor's series and using the conservation law (\ref{RHD_equation}) for dimension $d=2$ we get,
\begin{align}\label{eq: Taylor_exp_sol}
    (\mb{u}^e)^{n+1}= (\mb{u}^e)^n - \Delta t \bigg[\frac{\p \mb{F}\big((\mb{u}^e)^n\big)}{\p x} +\frac{\p \mb{G}\big((\mb{u}^e)^n\big)}{\p y}\bigg] + O\big(\Delta t^{N+2}\big),
\end{align}
where
\begin{equation}\label{timeavgflux}
    \mb{F}(\mb{u}) = \sum^{N}_{m=0}\frac{\Delta t^m}{(m+1)!}\frac{\p^m \mb{f}(\mb{u})}{\p t^m},\ \ \ \ \mb{G}(\mb{u}) = \sum^{N}_{m=0}\frac{\Delta t^m}{(m+1)!}\frac{\p^m \mb{g}(\mb{u})}{\p t^m}.
\end{equation}
Here \rev{and in the rest of the paper}, we have represented the fluxes $\mb{f}_1, \mb{f}_2$ as $\mb{f},\mb{g}$ for the sake of notational simplicity. The quantities $\mb{F}(\mb{u})$ and $\mb{G}(\mb{u})$ are called the time average fluxes, as they can be expressed as the time averages of the truncated Taylor's series expansions of the original fluxes in the time interval $[t^n,t^{n+1}]$. Reconstructing the time average fluxes by continuous functions $\mb{F}_h, \mb{G}_h$ in their respective space variables and truncating the equation~\eqref{eq: Taylor_exp_sol}, we get the single stage update of LWFR scheme at the solution points as,
\begin{equation}\label{LWupdate}
    (\mb{u}^e_{ij})^{n+1} = (\mb{u}^e_{ij})^n - \frac{\Delta t}{\Delta x_p}\frac{d \mb{F}_h}{d \xi}(\xi_i, \eta_j) - \frac{\Delta t}{\Delta y_q}\frac{d \mb{G}_h}{d \eta}(\xi_i,\eta_i), \ \ 0\leq i,j\leq N,
\end{equation}
where
\begin{align}\label{LWupdate.fluxes}
\begin{split}
\mb{F}_h(\xi,\eta_j) &= \Big[\mb{F}_{p-\frac{1}{2}}(\eta_j) - \mb{F}_h^\delta(0,\eta_j) \Big] \revb{c_L(\xi)} + \mb{F}_h^\delta(\xi,\eta_j) + \Big[\mb{F}_{p+\frac{1}{2}}(\eta_j) - \mb{F}_h^\delta(1,\eta_j) \Big] \revb{c_R(\xi)},\\
\mb{G}_h(\xi_i,\eta) &= \Big[\mb{G}_{q-\frac{1}{2}}(\xi_i) - \mb{G}_h^\delta(\xi_i,0) \Big] \revb{c_L(\eta)} + \mb{G}_h^\delta(\xi_i,\eta) + \Big[\mb{G}_{q+\frac{1}{2}}(\xi_i) - \mb{G}_h^\delta(\xi_i,1) \Big] \revb{c_R(\eta)}.
\end{split}
\end{align}
In the above expressions, $\mb{F}_h^\delta$ and $\mb{G}_h^\delta$ are local estimates of the time average fluxes $\mb{F}$ and $\mb{G}$ which may be discontinuous at element boundaries, and are given by, 
\[
\mb{F}_h^\delta(\xi,\eta_j) = \sum_{i=0}^N \mb{F}_{ij} \ell_i(\xi)
\qquad \textrm{and} \qquad \mb{G}_h^\delta(\xi_i,\eta)=\sum_{j=0}^N \mb{G}_{ij} \ell_j(\eta),
\]
where the approximations of the time average fluxes at the solution points, $\mb{F}_{ij} \approx \mb{F}(\xi_i,\eta_j)$ and $\mb{G}_{ij}\approx \mb{G}(\xi_i,\eta_j)$ for $i,j=1,2,\dots,N$ are found from the approximate Lax-Wendroff procedure as will be explained in Section \ref{subsec: Time_avg_flux}. Again, $\mb{F}_{p\pm\frac{1}{2}}$, $\mb{G}_{q\pm\frac{1}{2}}$ are some numerical fluxes which approximate the fluxes $\mb{F}$, $\mb{G}$ at faces $p\pm\frac{1}{2}$ and $q\pm\frac{1}{2}$ respectively. 

The correction functions \revb{$c_L,c_R$} play an important role in making the time average fluxes continuous; they must satisfy, 
\[
\revb{c_L(0)}=1, \qquad \revb{c_L(1)}=0, \qquad \revb{c_R(0)}=0, \qquad \revb{c_R(1)}=1
\]
for the continuity of the flux across the whole domain, and again, it should have a minimal effect inside the element far from the boundaries. The choice of the correction function can affect the accuracy and stability of the scheme~\cite{huynh2007flux, vincent2011new}. Here we take the Radau correction function, which is a polynomial of degree $N+1$, which belongs to the family of the correction functions studied in~\cite{vincent2011new}. This correction function, when used with the Gauss-Legendre nodes, makes the flux reconstruction scheme equivalent to the nodal DG scheme with the Gauss-Legendre nodes as the solution points and quadrature points. This correction function is given by,
\begin{equation}
\revb{c_L(\nu)} = \frac{(-1)^N}{2}[P_N(2\nu -1) - P_{N+1}(2\nu - 1)],\
\ \ \ \revb{c_R(\nu)} = \frac{1}{2}[P_N(2\nu -1) + P_{N+1}(2\nu - 1)],
\end{equation}
where $P_N:[-1,1]\to \mathbb{R}$ is the degree-$N$ Legendre polynomial.

\subsection{Time average flux at the solution points} \label{subsec: Time_avg_flux}
Computing time average fluxes from (\ref{timeavgflux}), by replacing the temporal derivatives with the spatial derivatives directly using the chain rule would require a large number of algebraic computations, and so following~\cite{zorio2017approximate, BABBAR2022111423}, we first approximate the temporal derivatives with some finite difference formulae.

From (\ref{timeavgflux}), we get the following expression for the time average flux inside each element for the $2$-order scheme ($N=1$),
\begin{align*}
\mb{F}(\xi_i, \eta_j)&=\mb{f}(\mb{u}_{ij})+\frac{\Delta t}{2}\frac{\p \mb{f}(\mb{u}_{ij})}{\p t}\\
&\approx \mb{f}(\mb{u}_{ij}) + \frac{\Delta t}{2}\frac{\mb{f}\big(\mb{u}(\xi_i,\eta_j, t+\Delta t)\big)-\mb{f}\big(\mb{u}(\xi_i,\eta_j,t-\Delta t)\big)}{2 \Delta t}\\
&\approx \mb{f}(\mb{u}_{ij}) + \frac{\mb{f}\big(\mb{u}(\xi_i,\eta_j, t) + \mb{u}_t(\xi_i,\eta_j, t) \Delta t\big)-\mb{f}\big(\mb{u}(\xi_i,\eta_j, t) - \mb{u}_t(\xi_i,\eta_j, t) \Delta t\big)}{4}\\
&= \mb{f}(\mb{u}_{ij}) + \frac{1}{4}\bigg[\mb{f}\big(\mb{u}_{ij}+\Delta t (\mb{u}_{ij})_t\big)- \mb{f}\big(\mb{u}_{ij}-\Delta t (\mb{u}_{ij})_t\big)\bigg]
\end{align*}
where $(\mb{u}_{ij})_t$ can be computed as
\rev{
\[
(\mb{u}_{ij})_t=-\frac{1}{\Delta x_p}\frac{\p \mb{f}\big(\mb{u}\big)}{\p \xi}(\xi_i,\eta_j)-\frac{1}{\Delta y_q}\frac{\p \mb{g}\big(\mb{u}\big)}{\p \eta}(\xi_i,\eta_j).
\]
}
Introducing the notations, 
\revb{
\begin{equation}
\mb{f}^{(m)} = \Delta t^m \frac{\p^m \mb{f}}{\p t^m},\qquad \mb{u}^{(m)} = \Delta t^m \frac{\p^m \mb{u}}{\p t^m}, \qquad m=1,2,\ldots
\end{equation}
}
\rev{the approximation of the time average flux, $\mb{F}$ at solution point $(\xi_i, \eta_j)$ can be expressed in a simplified form as,}
\[
\mb{F}_{ij}=\mb{f}_{ij}+\frac{1}{2}\mb{f}_{ij}^{(1)},
\]
with
\begin{equation}
    \mb{u}_{ij}^{(1)} = -\frac{\Delta t}{\Delta x_p} \mb{f}_{ij,\xi} - \frac{\Delta t}{\Delta y_q} \mb{g}_{ij,\eta}, \qquad
    \mb{f}_{ij}^{(1)} = \frac{1}{2}\Big[\mb{f}\Big(\mb{u}_{ij}+\mb{u}_{ij}^{(1)}\Big)-\mb{f}\Big(\mb{u}_{ij}-\mb{u}_{ij}^{(1)}\Big) \Big]
\end{equation}
Here $\mb{f}_{ij,\xi},\ \mb{g}_{ij,\eta}$ are found by taking the derivatives of the polynomial approximations of the fluxes $\mb{f}(\mb{u}),\ \mb{g}(\mb{u})$ \rev{with respect to $\xi,\ \eta$ respectively at the solution point $(\xi_i,\eta_j)$}.

Similarly, defining $\mb{g}^{(m)} = \Delta t^m \frac{\p^m \mb{g}}{\p t^m}$ \rev{and taking $\mb{G}_{ij}\approx \mb{G}(\xi_i,\eta_j)$} we get,
\begin{equation}
\mb{G}_{ij}=\mb{g}_{ij}+\frac{1}{2}\mb{g}_{ij}^{(1)} \qquad \textrm{with} \qquad
    \mb{g}_{ij}^{(1)} = \frac{1}{2}\Big[\mb{g}\Big(\mb{u}_{ij}+\mb{u}_{ij}^{(1)}\Big)-\mb{g}\Big(\mb{u}_{ij}-\mb{u}_{ij}^{(1)}\Big) \Big].
\end{equation}
 Now, doing the same process and applying the finite difference formulae according to the order of the scheme, we get the approximations of the time average fluxes for the higher-order schemes as follows:
 \begin{equation}
    \mb{F}_{ij}=\sum\limits_{r=1}^{N} \frac{1}{(r+1)!}\mb{f}_{ij}^{(r)}, \qquad
    \mb{G}_{ij}=\sum\limits_{r=1}^{N} \frac{1}{(r+1)!}\mb{g}_{ij}^{(r)},
\end{equation}
for the scheme of order $N+1$. The terms in these expressions are calculated according to the order of the scheme.
 
For the $3$-order scheme ($N=2$),
\begin{align}
\begin{split}
    \mb{f}_{ij}^{(1)} &= \frac{1}{2}\left[\mb{f}\left(\sum\limits_{k=0}^1 \frac{1}{k!}\mb{u}_{ij}^{(k)}\right) - \mb{f}\left(\sum\limits_{k=0}^1 \frac{(-1)^2}{k!}\mb{u}_{ij}^{(k)}\right) \right],\\
    \mb{g}_{ij}^{(1)} &= \frac{1}{2}\left[\mb{g}\left(\sum\limits_{k=0}^1 \frac{1}{k!}\mb{u}_{ij}^{(k)}\right) - \mb{g}\left(\sum\limits_{k=0}^1 \frac{(-1)^2}{k!}\mb{u}_{ij}^{(k)}\right) \right],\\
    \mb{f}_{ij}^{(2)} &= \mb{f}\left(\sum\limits_{k=0}^2 \frac{1}{k!}\mb{u}_{ij}^{(k)}\right) - 2 \mb{f}(\mb{u}_{ij}) + \mb{f}\left(\sum\limits_{k=0}^2 \frac{(-1)^2}{k!}\mb{u}_{ij}^{(k)}\right),\\
    \mb{g}_{ij}^{(2)} &= \mb{g}\left(\sum\limits_{k=0}^2 \frac{1}{k!}\mb{u}_{ij}^{(k)}\right) - 2 \mb{g}(\mb{u}_{ij}) + \mb{g}\left(\sum\limits_{k=0}^2 \frac{(-1)^2}{k!}\mb{u}_{ij}^{(k)}\right),
\end{split}
\end{align}
where
\begin{align*}
    \mb{u}_{ij}^{(m)} = -\frac{\Delta t}{\Delta x_p} \mb{f}_{ij,\xi}^{(m-1)} - \frac{\Delta t}{\Delta y_q} \mb{g}_{ij,\eta}^{(m-1)},\ \ \ \ \text{for}\ m=1,2.
\end{align*}

Similarly, for the $4$-order scheme ($N=3$),
\begin{align}
\begin{split}
    \mb{f}_{ij}^{(1)} &= \frac{1}{12}\left[-\mb{f}\left(\sum\limits_{k=0}^1 \frac{2^k}{k!}\mb{u}_{ij}^{(k)}\right) + 8\mb{f}\left(\sum\limits_{k=0}^1 \frac{1}{k!}\mb{u}_{ij}^{(k)}\right) - 8\mb{f}\left(\sum\limits_{k=0}^1 \frac{(-1)^k}{k!}\mb{u}_{ij}^{(k)}\right) + \mb{f}\left(\sum\limits_{k=0}^1 \frac{(-2)^k}{k!}\mb{u}_{ij}^{(k)}\right)\right],\\
    \mb{g}_{ij}^{(1)} &= \frac{1}{12}\left[-\mb{g}\left(\sum\limits_{k=0}^1 \frac{2^k}{k!}\mb{u}_{ij}^{(k)}\right) + 8\mb{g}\left(\sum\limits_{k=0}^1 \frac{1}{k!}\mb{u}_{ij}^{(k)}\right) - 8\mb{g}\left(\sum\limits_{k=0}^1 \frac{(-1)^k}{k!}\mb{u}_{ij}^{(k)}\right) + \mb{g}\left(\sum\limits_{k=0}^1 \frac{(-2)^k}{k!}\mb{u}_{ij}^{(k)}\right)\right],\\
    \mb{f}_{ij}^{(2)} &= \mb{f}\left(\sum\limits_{k=0}^2 \frac{1}{k!}\mb{u}_{ij}^{(k)}\right) - 2 \mb{f}(\mb{u}_{ij}) + \mb{f}\left(\sum\limits_{k=0}^2 \frac{(-1)^2}{k!}\mb{u}_{ij}^{(k)}\right),\\
    \mb{g}_{ij}^{(2)} &= \mb{g}\left(\sum\limits_{k=0}^2 \frac{1}{k!}\mb{u}_{ij}^{(k)}\right) - 2 \mb{g}(\mb{u}_{ij}) + \mb{g}\left(\sum\limits_{k=0}^2 \frac{(-1)^2}{k!}\mb{u}_{ij}^{(k)}\right),\\
    \mb{f}_{ij}^{(3)} &= \frac{1}{2}\left[\mb{f}\left(\sum\limits_{k=0}^3 \frac{2^k}{k!}\mb{u}_{ij}^{(k)}\right) - 2\mb{f}\left(\sum\limits_{k=0}^3 \frac{1}{k!}\mb{u}_{ij}^{(k)}\right) + 2\mb{f}\left(\sum\limits_{k=0}^3 \frac{(-1)^k}{k!}\mb{u}_{ij}^{(k)}\right) - \mb{f}\left(\sum\limits_{k=0}^3 \frac{(-2)^k}{k!}\mb{u}_{ij}^{(k)}\right) \right],\\
    \mb{g}_{ij}^{(3)} &= \frac{1}{2}\left[\mb{g}\left(\sum\limits_{k=0}^3 \frac{2^k}{k!}\mb{u}_{ij}^{(k)}\right) - 2\mb{g}\left(\sum\limits_{k=0}^3 \frac{1}{k!}\mb{u}_{ij}^{(k)}\right) + 2\mb{g}\left(\sum\limits_{k=0}^3 \frac{(-1)^k}{k!}\mb{u}_{ij}^{(k)}\right) - \mb{g}\left(\sum\limits_{k=0}^3 \frac{(-2)^k}{k!}\mb{u}_{ij}^{(k)}\right) \right],
\end{split}
\end{align}
where
\begin{align*}
    \mb{u}_{ij}^{(m)} = -\frac{\Delta t}{\Delta x_p} \mb{f}_{ij,\xi}^{(m-1)} - \frac{\Delta t}{\Delta y_q} \mb{g}_{ij,\eta}^{(m-1)},\ \ \ \ \text{for}\ m=1,2,3.
\end{align*}

For the $5$-order scheme ($N=4$),
\begin{align}
\begin{split}
    \mb{f}_{ij}^{(1)} &= \frac{1}{12}\left[-\mb{f}\left(\sum\limits_{k=0}^1 \frac{2^k}{k!}\mb{u}_{ij}^{(k)}\right) + 8\mb{f}\left(\sum\limits_{k=0}^1 \frac{1}{k!}\mb{u}_{ij}^{(k)}\right) - 8\mb{f}\left(\sum\limits_{k=0}^1 \frac{(-1)^k}{k!}\mb{u}_{ij}^{(k)}\right) + \mb{f}\left(\sum\limits_{k=0}^1 \frac{(-2)^k}{k!}\mb{u}_{ij}^{(k)}\right)\right],\\
    \mb{g}_{ij}^{(1)} &= \frac{1}{12}\left[-\mb{g}\left(\sum\limits_{k=0}^1 \frac{2^k}{k!}\mb{u}_{ij}^{(k)}\right) + 8\mb{g}\left(\sum\limits_{k=0}^1 \frac{1}{k!}\mb{u}_{ij}^{(k)}\right) - 8\mb{g}\left(\sum\limits_{k=0}^1 \frac{(-1)^k}{k!}\mb{u}_{ij}^{(k)}\right) + \mb{g}\left(\sum\limits_{k=0}^1 \frac{(-2)^k}{k!}\mb{u}_{ij}^{(k)}\right)\right],\\
    \mb{f}_{ij}^{(2)} &= \frac{1}{12}\Bigg[-\mb{f}\left(\sum\limits_{k=0}^2 \frac{2^k}{k!}\mb{u}_{ij}^{(k)}\right) + 16\mb{f}\left(\sum\limits_{k=0}^2 \frac{1}{k!}\mb{u}_{ij}^{(k)}\right) - 30\mb{f}(\mb{u}_{ij}) + 16\mb{f}\left(\sum\limits_{k=0}^2 \frac{(-1)^k}{k!}\mb{u}_{ij}^{(k)}\right) \\
    &\hspace{30em} - \mb{f}\left(\sum\limits_{k=0}^2 \frac{(-2)^k}{k!}\mb{u}_{ij}^{(k)}\right) \Bigg],\\
    \mb{g}_{ij}^{(2)} &= \frac{1}{12}\Bigg[-\mb{g}\left(\sum\limits_{k=0}^2 \frac{2^k}{k!}\mb{u}_{ij}^{(k)}\right) + 16\mb{g}\left(\sum\limits_{k=0}^2 \frac{1}{k!}\mb{u}_{ij}^{(k)}\right) - 30\mb{g}(\mb{u}_{ij}) + 16\mb{g}\left(\sum\limits_{k=0}^2 \frac{(-1)^k}{k!}\mb{u}_{ij}^{(k)}\right) \\
    &\hspace{30em} - \mb{g}\left(\sum\limits_{k=0}^2 \frac{(-2)^k}{k!}\mb{u}_{ij}^{(k)}\right) \Bigg],\\
    \mb{f}_{ij}^{(3)} &= \frac{1}{2}\left[\mb{f}\left(\sum\limits_{k=0}^3 \frac{2^k}{k!}\mb{u}_{ij}^{(k)}\right) - 2\mb{f}\left(\sum\limits_{k=0}^3 \frac{1}{k!}\mb{u}_{ij}^{(k)}\right) + 2\mb{f}\left(\sum\limits_{k=0}^3 \frac{(-1)^k}{k!}\mb{u}_{ij}^{(k)}\right) - \mb{f}\left(\sum\limits_{k=0}^3 \frac{(-2)^k}{k!}\mb{u}_{ij}^{(k)}\right) \right],\\
    \mb{g}_{ij}^{(3)} &= \frac{1}{2}\left[\mb{g}\left(\sum\limits_{k=0}^3 \frac{2^k}{k!}\mb{u}_{ij}^{(k)}\right) - 2\mb{g}\left(\sum\limits_{k=0}^3 \frac{1}{k!}\mb{u}_{ij}^{(k)}\right) + 2\mb{g}\left(\sum\limits_{k=0}^3 \frac{(-1)^k}{k!}\mb{u}_{ij}^{(k)}\right) - \mb{g}\left(\sum\limits_{k=0}^3 \frac{(-2)^k}{k!}\mb{u}_{ij}^{(k)}\right) \right],\\
    \mb{f}_{ij}^{(4)} &= \mb{f}\left(\sum\limits_{k=0}^4 \frac{2^k}{k!}\mb{u}_{ij}^{(k)}\right) - 4\mb{f}\left(\sum\limits_{k=0}^4 \frac{1}{k!}\mb{u}_{ij}^{(k)}\right) + 6\mb{f}(\mb{u}_{ij}) - 4\mb{f}\left(\sum\limits_{k=0}^4 \frac{(-1)^k}{k!}\mb{u}_{ij}^{(k)}\right) + \mb{f}\left(\sum\limits_{k=0}^4 \frac{(-2)^k}{k!}\mb{u}_{ij}^{(k)}\right),\\
    \mb{g}_{ij}^{(4)} &= \mb{g}\left(\sum\limits_{k=0}^4 \frac{2^k}{k!}\mb{u}_{ij}^{(k)}\right) - 4\mb{g}\left(\sum\limits_{k=0}^4 \frac{1}{k!}\mb{u}_{ij}^{(k)}\right) + 6\mb{g}(\mb{u}_{ij}) - 4\mb{f}\left(\sum\limits_{k=0}^4 \frac{(-1)^k}{k!}\mb{u}_{ij}^{(k)}\right) + \mb{g}\left(\sum\limits_{k=0}^4 \frac{(-2)^k}{k!}\mb{u}_{ij}^{(k)}\right),
    \end{split}
\end{align}
where
\begin{align*}
    \mb{u}_{ij}^{(m)} = -\frac{\Delta t}{\Delta x_p} \mb{f}_{ij,\xi}^{(m-1)} - \frac{\Delta t}{\Delta y_q} \mb{g}_{ij,\eta}^{(m-1)},\ \ \ \ \text{for}\ m=1,2,3,4.
\end{align*}

We can continue in a similar way to explain this method for higher-order schemes.

\subsection{Numerical flux}\label{sec: Numerical_flux}
There are a variety of numerical fluxes available in the literature. In this work, we use a Rusanov type flux~\cite{rusanov1962calculation}, which only requires an estimate of local wave speed and no other characteristic information. For the time average flux, it is given by,
\begin{align}
    \mb{F}_{p+\frac{1}{2}}(\eta_j) &= \frac{1}{2}\big[\mb{F}^l_{p+\frac{1}{2}}(\eta_j) + \mb{F}^r_{p+\frac{1}{2}}(\eta_j) \big] - \frac{1}{2}\lambda_{p+\frac{1}{2}}\big[\mb{U}^r_{p+\frac{1}{2}}(\eta_j) - \mb{U}^l_{p+\frac{1}{2}}(\eta_j) \big],\ \ \  0\leq j \leq N,\label{eq:numerical_flux_x}\\
    \mb{G}_{q+\frac{1}{2}}(\xi_i) &= \frac{1}{2}\big[\mb{G}^d_{q+\frac{1}{2}}(\xi_i) + \mb{G}^u_{q+\frac{1}{2}}(\xi_i) \big] - \frac{1}{2}\lambda_{q+\frac{1}{2}}\big[\mb{U}^u_{q+\frac{1}{2}}(\xi_i) - \mb{U}^d_{q+\frac{1}{2}}(\xi_i) \big],\ \ \  0\leq i \leq N, \label{eq:numerical_flux_y}
\end{align}
where superscripts $(\cdot)^l$, $(\cdot)^r$, $(\cdot)^d$ and $(\cdot)^u$ are used to denote the trace values of the respective quantities computed from the left, right, lower, and upper elements respectively and
\[
    \lambda_{p+\frac{1}{2}}=\max\big\{r\big(\mb{f}'(\Bar{\mb{u}}_{pq}^n)\big),r(\mb{f}'\big(\Bar{\mb{u}}_{p+1,q}^n)\big)\big\},\qquad    \lambda_{q+\frac{1}{2}}=\max\big\{r\big(\mb{g}'(\Bar{\mb{u}}_{pq}^n)\big),r(\mb{g}'\big(\Bar{\mb{u}}_{p,q+1}^n)\big)\big\}.
\]
Here, $\Bar{\mb{u}}_{pq}^n$, $\Bar{\mb{u}}_{p+1,q}^n$ and $\Bar{\mb{u}}_{p,q+1}^n$ are the averages of the solutions at time level $n$ in the elements $\Omega_{pq}$, $\Omega_{p+1,q}$ and $\Omega_{p,q+1}$ respectively with $r(\cdot)$ denoting the spectral radius. Note that, here $\mb{f}'$ and $\mb{g}'$ are denoting the flux Jacobians. In the dissipative part of the numerical fluxes we have used the time average solution,
\begin{equation}
\mb{U} = \sum^N_{m=0}\frac{\Delta t^m}{(m+1)!}\frac{\p^m \mb{u}}{\p t^m}
\end{equation}
instead of the solution at time level $n$ because it gives a stable scheme with comparatively higher CFL number~\cite{BABBAR2022111423}. \revc{More precisely, we use the trace values of the time average solutions at the faces, which are computed using the Lagrange polynomial extrapolation of the time average solutions at the solution points. The derivatives of $\mb{u}$ at the solution points are approximated as,
\begin{equation}
    \frac{\p^m \mb{u}}{\p t^m}(\xi_i, \eta_j) \approx \frac{1}{\Delta t^m}\mb{u}_{ij}^{(m)}
\end{equation}
Note that the quantities $\mb{u}_{ij}^{(m)}$ are already available during the approximate Lax-Wendroff procedure (Section~\ref{subsec: Time_avg_flux}), hence we do not need to recompute it.}

The natural way to find the trace values of the fluxes $\mb{F}^l_{p+\frac{1}{2}}$, $\mb{F}^r_{p+\frac{1}{2}}$, $\mb{G}^d_{q+\frac{1}{2}}$, and $\mb{G}^u_{q+\frac{1}{2}}$ is to extrapolate the time average fluxes $\mb{F}_h^\delta, \mb{G}_h^\delta$ to the respective boundaries of the elements. However, \revc{it is observed in~\cite{BABBAR2022111423} that this procedure can lead to suboptimal convergence rates (one order lower) for some nonlinear problems. Thus, we construct the time average flux at the element faces directly which gives optimal convergence rates. We first extrapolate the solution available in an element at the solution points to the corresponding faces by polynomial extrapolation with Lagrange polynomials and then compute the trace values of the fluxes there by the approximate Lax-Wendroff procedure of Section~\ref{subsec: Time_avg_flux}. The reader may refer to Section 5.2 of~\cite{BABBAR2022111423} for a detailed algorithm.
}

\subsection{Conservative nature of the scheme}
The Lax-Wendroff theorem says that a consistent and conservative numerical method for a system of hyperbolic conservation laws converges to a weak solution, provided it is convergent. So, the conservative nature of the scheme becomes an important factor. From (\ref{LWupdate}) we get,
\begin{align}
&\sum^N_{i,j=0}w_iw_j(\mb{u}^e_{ij})^{n+1} = \sum^N_{i,j=0}w_iw_j(\mb{u}^e_{ij})^n - \frac{\Delta t}{\Delta x_p}\sum^N_{i,j=0}w_iw_j\frac{d \mb{F}_h}{d \xi}(\xi_i, \eta_j) - \frac{\Delta t}{\Delta y_q}\sum^N_{i,j=0}w_iw_j\frac{d \mb{G}_h}{d \eta}(\xi_i,\eta_j),\nonumber\\
 &\hspace{37em} 0\leq i,j\leq N\nonumber\\
\implies &\iint\limits_{\Omega_{pq}} (\mb{u}_h)^{n+1} = \iint\limits_{\Omega_{pq}} (\mb{u}_h)^{n}- \Delta t \Bigg[\sum^{N}_{j=0}w_j\Big( \mb{F}_{p+\frac{1}{2}}(\eta_j)-\mb{F}_{p-\frac{1}{2}}(\eta_j)\Big) - \sum^{N}_{i=0}w_i\Big(\mb{G}_{q+\frac{1}{2}}(\xi_i)-\mb{G}_{q-\frac{1}{2}}(\xi_i)\Big)\Bigg],
\end{align}
which says that the total quantity of $\mb{u}$ inside an element changes with time solely because of the fluxes at the element faces, and hence the scheme preserves the conservative nature.

\subsection{Boundary conditions}
In the case of conservation laws, the boundary conditions are usually implemented weakly through the numerical fluxes. Here, we will explain the implementation of some common boundary conditions that are required in the numerical tests discussed later.

\subsubsection{Outflow boundary condition}
When the characteristics exit the domain through a boundary, it is called an outflow boundary, and the numerical flux at these types of boundaries is computed in an upwind manner from the interior solutions.

Suppose the left boundary $x=x_a$ is an outflow boundary. Then the numerical flux at this boundary is computed as $\mb{F}_{x_a}=\mb{F}^r_{x_a}$. The process for finding the trace value of the flux $\mb{F}^r_{x_a}$ is explained in Section \ref{sec: Numerical_flux}.

\subsubsection{Inflow boundary condition}
When the characteristics enter the domain through a boundary, it is called an inflow boundary, and we have a specified boundary value defined there. At these types of boundaries, we need to construct the numerical flux from the given boundary value. 

Suppose the left boundary $x=x_a$ is an inflow boundary, and we have the boundary value, \revb{$\mb{u}(x_a,t)=\mb{u}_a(t),\ t\geq 0$}, then the numerical flux at this boundary is computed as,
\begin{equation}
\mb{F}_{x_a}=\mb{F}_{x_a}^r = \frac{1}{\Delta t}\int_{t_n}^{t_{n+1}}\mb{f}\big( \revb{\mb{u}_a(t)} \big)\ud t.
\end{equation}
We approximate this integral using the degree-$N$ Gauss-Legendre quadrature, which ensures the required order of accuracy.

\subsubsection{Periodic boundary condition}\label{sec: Periodic Boundary Condition}
Suppose we have discretized the computational domain into the elements $\Omega_{pq}$ for $p=0,1,\dots,M_x$ and $q=0,1,\dots,M_y$.
The numerical fluxes at periodic boundaries are computed using equations \eqref{eq:numerical_flux_x} and \eqref{eq:numerical_flux_y} by introducing some ghost values, $\mb{F}^l_{-\frac{1}{2}}$, $\mb{U}^l_{-\frac{1}{2}}$, $\Bar{\mb{u}}_{-1,q}^n$ 
for the left boundary; $\mb{F}^r_{M_x + \frac{1}{2}}$, $\mb{U}^r_{M_x + \frac{1}{2}}$, $\Bar{\mb{u}}_{M_x + 1,q}^n$ for the right boundary; $\mb{G}^d_{-\frac{1}{2}}$, $\mb{U}^d_{-\frac{1}{2}}$, $\Bar{\mb{u}}_{p,-1}^n$ for the lower boundary; and $\mb{G}^u_{M_y + \frac{1}{2}}$, $\mb{U}^u_{M_y + \frac{1}{2}}$, $\Bar{\mb{u}}_{p, M_y + 1}^n$ for the upper boundary.

Suppose the problem has periodic boundary condition in $x$-direction, then we set the corresponding ghost values as,
\[
    \mb{F}^l_{-\frac{1}{2}}, \mb{F}^r_{M_x + \frac{1}{2}} = \mb{F}^l_{M_x+\frac{1}{2}}, \mb{F}^r_{- \frac{1}{2}},\qquad
    \mb{U}^l_{-\frac{1}{2}}, \mb{U}^r_{M_x + \frac{1}{2}} = \mb{U}^l_{M_x+\frac{1}{2}}, \mb{U}^r_{- \frac{1}{2}},\qquad
    \Bar{\mb{u}}_{-1,q}^n, \Bar{\mb{u}}_{M_x + 1, q}^n = \Bar{\mb{u}}_{M_x,q}^n, \Bar{\mb{u}}_{0, q}^n.
\]
Similarly, if the problem has periodic boundary condition in $y$-direction, then we set the respective ghost values as,
\[
    \mb{G}^d_{-\frac{1}{2}}, \mb{G}^u_{M_y + \frac{1}{2}} = \mb{G}^d_{M_y+\frac{1}{2}}, \mb{G}^u_{- \frac{1}{2}},\qquad
    \mb{U}^d_{-\frac{1}{2}}, \mb{U}^u_{M_y + \frac{1}{2}} = \mb{U}^d_{M_y+\frac{1}{2}}, \mb{U}^u_{- \frac{1}{2}},\qquad
    \Bar{\mb{u}}_{p,-1}^n, \Bar{\mb{u}}_{p,M_y + 1}^n = \Bar{\mb{u}}_{p,M_y}^n, \Bar{\mb{u}}_{p, 0}^n.
\]

\subsubsection{Reflective boundary condition}
These types of boundaries are also known as solid walls since the waves hitting these boundaries get reflected in the opposite direction with the same velocity, making the boundaries act as solid walls. These boundaries are also dealt with by introducing the ghost values as in Section \ref{sec: Periodic Boundary Condition}.

To explain how we update the ghost values, consider the reflective boundary condition at the left boundary, $x=x_a$. Then the corresponding fluid velocity, $v_1$ is assumed to be zero at this boundary, and hence we reflect the velocities in the ghost values in the normal direction to the boundary. More precisely, we calculate $\mb{F}^l_{-\frac{1}{2}}$, $\mb{U}^l_{-\frac{1}{2}}$ and $\Bar{\mb{u}}_{-1,q}^n$ from $\mb{F}^r_{-\frac{1}{2}}$, $\mb{U}^r_{-\frac{1}{2}}$ and $\Bar{\mb{u}}_{0,q}^n$ by reversing the sign in the velocity $v_1$ and keeping all the other quantities unchanged. Then, the numerical flux at the boundary is computed in the usual manner using equations \eqref{eq:numerical_flux_x} and \eqref{eq:numerical_flux_y}. 

\section{Controlling spurious oscillations}\label{sec: oscillation_control}
Higher-order methods are prone to produce oscillations when the solution is discontinuous, and hence limiters are needed in practice. In~\cite{BABBAR2022111423}, the authors have taken the idea of the TVD limiter~\cite{cockburn1991runge,cockburn1989tvb} to give a posteriori limiter for the LWFR framework. Basically, the idea is to limit the slope of the solution to control its oscillations. However, using this limiter results in losing a lot of information near the discontinuities as it replaces the solution with a linear or constant polynomial. It also sometimes detects the smooth extrema as discontinuities, leading to loss of accuracy, which is a significant drawback for this limiter.
The authors have then modified it to the TVB limiter following~\cite{cockburn1991runge}, which can fix this problem to some extent, but it requires a proper choice of some parameters that are problem-dependent. Further, the TVB limiter loses most of the sub-cell information, which leads to poor performance. This can be seen from Figure~\ref{DP_ind_limiter} in Section~\ref{fourthRP_section}. \reva{In~\cite{dumbser2014posteriori,dumbser2016simple} the authors have applied a sub-cell based approach in \'a posteriori fashion by dividing an element into $2N+1$ sub-cells for $N$-degree polynomial approximation of the solution. This idea is also implemented in relativistic regime in~\cite{zanotti2015solving} for solving the relativistic magnetohydrodynamics equations.} \rev{However, in this work, we use a priori blending limiter~\cite{babbar2024admissibility} which preserves sub-cell information \revc{and is inspired from the sub-cell limiter of~\cite{hennemann2021provably} for the discontinuous Galerkin method}; the particular choice of sub-cells gives a natural way to modify the time average numerical fluxes for admissibility preservation of the solution mean value.}

The basic idea \rev{of this blending limiter} is to blend the high-order scheme with a low-order scheme near the discontinuities,
\begin{equation}
(\mb{u}^e)^{n+1} = (1-\alpha_e)(\mb{u}^e)^{H,n+1}+\alpha_e (\mb{u}^e)^{L,n+1}, \label{eq:blended.evolution}
\end{equation}
where $(\mb{u}^e)^{H,n+1}$ and $(\mb{u}^e)^{L,n+1}$ are the solution updates at the $(n+1)^{\text{th}}$ time level using high and low-order schemes respectively. \revc{The low-order scheme is chosen to be a first-order space-time discretization of~\eqref{RHD_equation} which is oscillation free and admissibility preserving (Appendix~\ref{sec: appendix}). It is then used to limit the high-order method towards the chosen low-order method by using the blending coefficient $\alpha_e \in [0,1]$ as in~\eqref{eq:blended.evolution}. The limiting reduces accuracy in both space and time, and thus the blending coefficient has to be chosen so that the numerical method adaptively switches to high or low-order method in regions where the solution is smooth or non-smooth, respectively.} The process of finding $\alpha_e$ by a discontinuity indicator model will be discussed in Section~\ref{sec:discontinuity_indicator}. Note that we have suppressed the indices for solution points for simplicity of notation.
\begin{figure}[ht]
\centering
\includegraphics[width=0.45\linewidth]{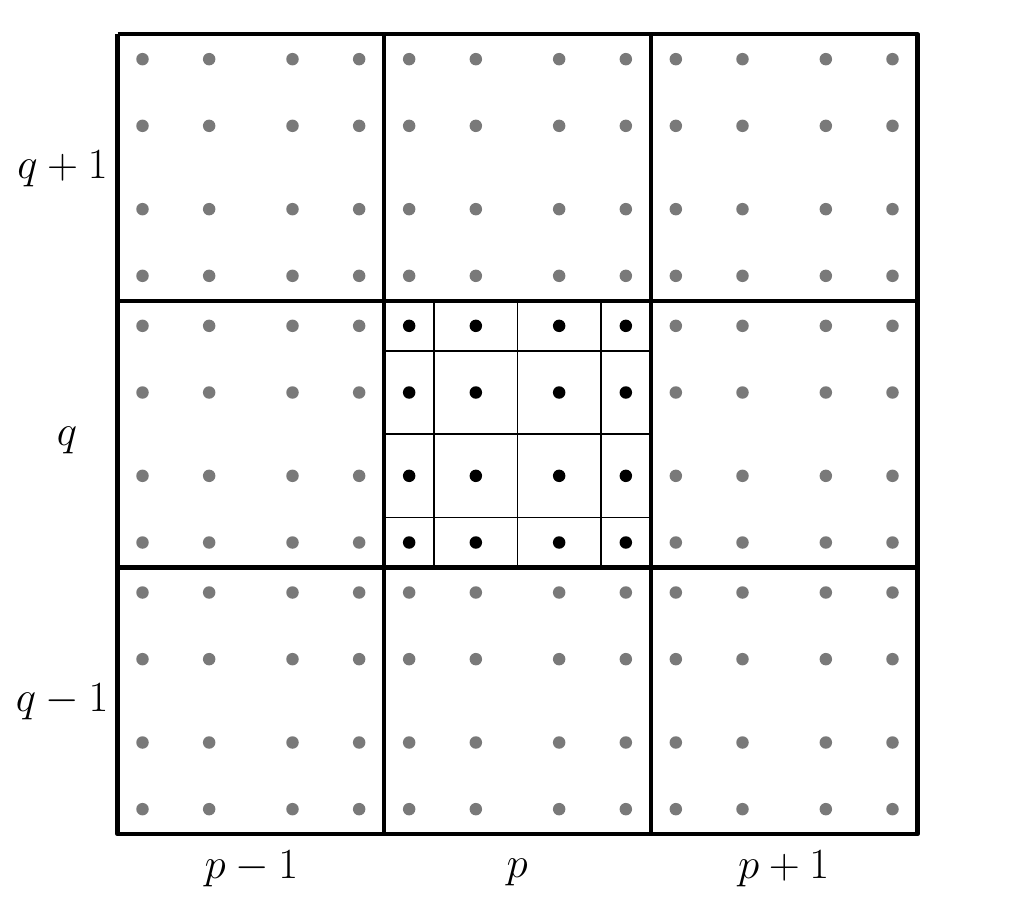}
\caption{Division of the element $\Omega_{pq}$ into $4\times 4$ sub-elements. (The dotted points inside the elements denote the solution points.)}\label{sub element division}
\end{figure}
\revc{The high-order scheme for computation of $(\mb{u}^e)^{H,n+1}$ is as in Section~\ref{sec:LWFR}, although the inter-element fluxes~(\ref{eq:numerical_flux_x},~\ref{eq:numerical_flux_y}) will be replaced by the blended numerical fluxes~\eqref{eq:loworder_scheme_notations} in order to maintain the conservation property (Section~\ref{sec:conservative_nature_section}).} For the low-order scheme, we divide the element $\Omega_{pq}$ into $(N+1)\times (N+1)$ sub-elements, each containing one solution point as shown in Figure~\ref{sub element division} and apply the low-order finite volume scheme \revc{to compute $(\mb{u}^e)^{L,n+1}$} in each of these sub-elements as explained below.

\rev{We first divide} the element $\Omega_e$ where $e=(p,q)$ into $(N+1)\times (N+1)$ sub-elements such that,
\begin{align}
\begin{split}
    x^p_{i+\frac{1}{2}}-x^p_{i-\frac{1}{2}}&=w_i \Delta x_p,\ \ \ \ \ 0\leq i\leq N,\\
    y^q_{j+\frac{1}{2}}-y^q_{j-\frac{1}{2}}&=w_j \Delta y_q,\ \ \ \ \ 0\leq j\leq N,
\end{split}
\end{align}
where $x^p_{i-\frac{1}{2}}, x^p_{i+\frac{1}{2}}$ and $y^q_{j-\frac{1}{2}}, y^q_{j+\frac{1}{2}}$ represent the left, right and lower, upper \textit{sub-faces} of $(i,j)^{\text{th}}$ sub-element respectively with $w_i$ as the $i^{\text{th}}$ quadrature weight associated to the solution points in reference coordinates.

\rev{Now obtain} the solution at the next time level with the help of the low-order scheme,
\begin{multline}\label{eq:loworder_scheme}
    (\mb{u}^{e}_{ij})^{L,n+1} = (\mb{u}^{e}_{ij})^n - \frac{\Delta t}{w_i \Delta x_p}[\hat{\mb{F}}_{i+\frac{1}{2}}(y_j)- \hat{\mb{F}}_{i-\frac{1}{2}}(y_j)] - \frac{\Delta t}{w_j \Delta y_q}[\hat{\mb{G}}_{j+\frac{1}{2}}(x_i)- \hat{\mb{G}}_{j-\frac{1}{2}}(x_i)]\\
     0\leq i,j\leq N,
\end{multline}
where
\begin{equation}\label{eq:loworder_scheme_notations}
\begin{split}
    &\hat{\mb{F}}_{-\frac{1}{2}}= \mb{F}_{p-\frac{1}{2}},  \hspace{5.7em}\hat{\mb{G}}_{-\frac{1}{2}}= \mb{G}_{q-\frac{1}{2}},\\
    &\hat{\mb{F}}_{i+\frac{1}{2}}= \mb{f}^{L,e}_{i+\frac{1}{2}},  \hspace{5.55em}\hat{\mb{G}}_{j+\frac{1}{2}}= \mb{g}^{L,e}_{j+\frac{1}{2}},\hspace{1cm}  0\leq i,j\leq N-1,\\
    &\hat{\mb{F}}_{N+\frac{1}{2}}= \mb{F}_{p+\frac{1}{2}},  \hspace{5em}\hat{\mb{G}}_{N+\frac{1}{2}}= \mb{G}_{q+\frac{1}{2}}.
\end{split}
\end{equation}
Here, $\mb{f}^{L,e}_{i+\frac{1}{2}}$, $\mb{g}^{L,e}_{j+\frac{1}{2}}$ denote the low-order sub-element fluxes. In this paper, we use the first-order finite volume scheme as the low-order scheme, and hence $\mb{f}^{L,e}_{i+\frac{1}{2}}(y_j)=\mb{f}^{\text{NF}}(\mb{u}_{ij}^e, \mb{u}_{i+1,j}^\rev{e})$ and $\mb{g}^{L,e}_{j+\frac{1}{2}}(x_i)=\mb{g}^{\text{NF}}(\mb{u}_{ij}^e, \mb{u}_{i,j+1}^\rev{e})$ with the numerical fluxes $\mb{f}^{\text{NF}}(\cdot,\cdot)$, $\mb{g}^{\text{NF}}(\cdot,\cdot)$ as the Rusanov flux~\cite{rusanov1962calculation}\revc{,
\begin{align}\label{eq: rusanov_flux}
\begin{split}
    \mb{f}^{\text{NF}}(\mb{u}^-, \mb{u}^+) &= \frac{1}{2}[\mb{f}(\mb{u}^-) + \mb{f}(\mb{u}^+)] - \frac{1}{2} \lambda_f [\mb{u}^+ - \mb{u}^-]\\
    \mb{g}^{\text{NF}}(\mb{u}^-, \mb{u}^+) &= \frac{1}{2}[\mb{g}(\mb{u}^-) + \mb{g}(\mb{u}^+)] - \frac{1}{2} \lambda_g [\mb{u}^+ - \mb{u}^-].
\end{split}
\end{align}
where,
\[
    \lambda_f = \max\{r\left( \mb{f}'(\mb{u}^-\right), r\left( \mb{f}'(\mb{u}^+\right) \}, \qquad \lambda_g = \max\{r\left( \mb{g}'(\mb{u}^-\right), r\left( \mb{g}'(\mb{u}^+\right) \}
\]
with $r(\cdot)$ denoting the spectral radius. 

Here, we denote the time average inter-element fluxes described in (\ref{eq:numerical_flux_x},~\ref{eq:numerical_flux_y}) as $\timeavgflux{\mb{F}}_{p\pm\frac{1}{2}}, \timeavgflux{\mb{G}}_{q\pm\frac{1}{2}}$. To maintain the conservation property of the blended scheme~\eqref{eq:blended.evolution}, the inter-element fluxes used for both low and high-order schemes should be same (refer to Section~\ref{sec:conservative_nature_section}) and while the low-order numerical fluxes $\mb{f}^{\text{NF}}(\cdot,\cdot)$, $\mb{g}^{\text{NF}}(\cdot,\cdot)$ control oscillations, the high-order numerical fluxes $\timeavgflux{\mb{F}}_{p\pm\frac{1}{2}}, \timeavgflux{\mb{G}}_{q\pm\frac{1}{2}}$~(\ref{eq:numerical_flux_x},~\ref{eq:numerical_flux_y}) produce high order accurate solutions. Thus, the initial guess of the blended numerical fluxes have to be chosen carefully to balance accuracy and robustness. Thus we compute the initial guess of the blended numerical fluxes as,}
\begin{equation}
\label{eq:initial.guess.blended.flux}
\begin{split}
\revc{\iniguessflux{\mb{F}}_{\norev{p\pm\frac{1}{2}}}}(y_j)&=(1-\alpha_{p\pm\frac{1}{2}})\revc{\timeavgflux{\mb{F}}_{\norev{p\pm\frac{1}{2}}}}(y_j)+\alpha_{p\pm\frac{1}{2}}\revc{\mb{f}^L_{p\pm\frac{1}{2}}(y_j)},\\
\revc{\iniguessflux{\mb{G}}_{\norev{q\pm\frac{1}{2}}}}(x_i)&=(1-\alpha_{q\pm\frac{1}{2}})\revc{\timeavgflux{\mb{G}}_{\norev{q\pm\frac{1}{2}}}}(x_i)+\alpha_{q\pm\frac{1}{2}}\revc{\mb{g}^L_{q\pm\frac{1}{2}}(x_i)}.
\end{split}
\end{equation}
\revc{where
\begin{align*}
    &\mb{f}^L_{p+\frac{1}{2}}(y_j) = \mb{f}^{\text{NF}}(\mb{u}^{p,q}_{N,j},\mb{u}^{p+1,q}_{0,j}), \qquad \mb{f}^L_{p-\frac{1}{2}}(y_j) = \mb{f}^{\text{NF}}(\mb{u}^{p-1,q}_{N,j},\mb{u}^{p,q}_{0,j}),\\
    &\mb{g}^L_{q+\frac{1}{2}}(x_i) = \mb{g}^{\text{NF}}(\mb{u}^{p,q}_{i,N},\mb{u}^{p,q+1}_{i,0}), \qquad \mb{g}^L_{q-\frac{1}{2}}(x_i) = \mb{g}^{\text{NF}}(\mb{u}^{p,q-1}_{i,N},\mb{u}^{p,q}_{i,0}).
\end{align*}

The blended coefficients at the faces are computed from the elements adjacent to the faces as,
\begin{align*}
&\alpha_{p-\frac{1}{2}} = \frac{1}{2}(\alpha_{p-1} + \alpha_{p}) \qquad \alpha_{p+\frac{1}{2}} = \frac{1}{2}(\alpha_{p} + \alpha_{p+1}) \\
&\alpha_{q-\frac{1}{2}} = \frac{1}{2}(\alpha_{q-1} + \alpha_{q})\ \qquad \alpha_{q+\frac{1}{2}} = \frac{1}{2}(\alpha_{q} + \alpha_{q+1}).
\end{align*}
This choice ensures that we use the high-order accurate fluxes in regions where the solution is smooth while using the robust low-order fluxes in non-smooth regions like shocks and other discontinuities. The procedure for finding the blending coefficients in an element is explained in Section~\ref{sec:discontinuity_indicator}.

Note that we will further modify these initial guesses of the blended numerical fluxes~\eqref{eq:initial.guess.blended.flux} in Section~\ref{sec: admissibility} for the admissibility in means (Definition~\ref{defn:adm.means}) of the blended scheme and will refer those as the \textit{blended numerical fluxes}. 
Now, all the fluxes needed in~\eqref{eq:loworder_scheme_notations} are available and we can compute the low-order evolution $(\mb{u}^{e}_{ij})^{L,n+1}$~\eqref{eq:loworder_scheme}. We can again compute the high-order evolution $(\mb{u}^{e}_{ij})^{H,n+1}$ from~(\ref{LWupdate},~\ref{LWupdate.fluxes}) using the blended numerical fluxes $\mb{F}_{p\pm\frac{1}{2}}, \mb{G}_{q\pm\frac{1}{2}}$ from Section~\ref{sec: admissibility} as the inter-element fluxes, prescribing the blending scheme in~\eqref{eq:blended.evolution}.}

\subsection{Conservative nature of the blended scheme} \label{sec:conservative_nature_section}
\revc{Consider the blended numerical fluxes $\mb{F}_{p\pm\frac{1}{2}}, \mb{G}_{q\pm\frac{1}{2}}$ as the common inter-element fluxes in both the high-order~\eqref{LWupdate} and low-order~\eqref{eq:loworder_scheme} schemes. Now from equations~(\ref{LWupdate},~\ref{LWupdate.fluxes}) with the blended numerical fluxes  at the inter-element faces,} one can easily see that the element average of the solution at time level $n+1$ for the high-order scheme is given by,
\begin{align*}
    (\Bar{\mb{u}}^e)^{H,n+1} &= \sum^N_{i,j=0} w_i w_j(\mb{u}_{ij}^e)^{H,n+1}\\
    &=(\Bar{\mb{u}}^e)^{n} - \frac{\Delta t}{\Delta x_p} \Bigg[\sum^{N}_{j=0}w_j\Big(\mb{F}_{p+\frac{1}{2}}(\eta_j)-\mb{F}_{p-\frac{1}{2}}(\eta_j)\Big)\Bigg] -\frac{\Delta t}{\Delta y_q}\Bigg[\sum^{N}_{i=0}w_i\Big( \mb{G}_{q+\frac{1}{2}}(\xi_i)-\mb{G}_{q-\frac{1}{2}}(\xi_i)\Big)\Bigg].
\end{align*}
Again, from equations~(\ref{eq:loworder_scheme},~\ref{eq:loworder_scheme_notations}) \revc{using the blended numerical fluxes at the inter-element faces} we get a similar expression for the low-order scheme,
\begin{align*}
    (\Bar{\mb{u}}^e)^{L,n+1}
    =&(\Bar{\mb{u}}^e)^{n} - \frac{\Delta t}{\Delta x_p} \Bigg[\sum^{N}_{j=0}w_j\Big(\mb{F}_{p+\frac{1}{2}}(\eta_j)-\mb{F}_{p-\frac{1}{2}}(\eta_j)\Big)\Bigg]-\frac{\Delta t}{\Delta y_q}\Bigg[\sum^{N}_{i=0}w_i\Big( \mb{G}_{q+\frac{1}{2}}(\xi_i)-\mb{G}_{q-\frac{1}{2}}(\xi_i)\Big)\Bigg].
\end{align*}
Consequently, for the blended scheme we get,
\begin{align}
     (\Bar{\mb{u}}^e)^{n+1}  &(1-\alpha_e)(\Bar{\mb{u}}^e)^{H,n+1}+\alpha_e (\Bar{\mb{u}}^e)^{L,n+1}\nonumber\\
    &= (\Bar{\mb{u}}^e)^{n} - \frac{\Delta t}{\Delta x_p} \Bigg[\sum^{N}_{j=0}w_j\Big(\mb{F}_{p+\frac{1}{2}}(\eta_j)-\mb{F}_{p-\frac{1}{2}}(\eta_j)\Big)\Bigg] -\frac{\Delta t}{\Delta y_q}\Bigg[\sum^{N}_{i=0}w_i\Big( \mb{G}_{q+\frac{1}{2}}(\xi_i)-\mb{G}_{q-\frac{1}{2}}(\xi_i)\Big)\Bigg].
\end{align}
Hence, the blended scheme is conservative. 

Note that for the conservative nature of the blended scheme, the inter-element fluxes used in the low and high-order schemes should be the same, because the flux leaving an element through a boundary should enter the adjacent element with the same boundary. If we had used different inter-element fluxes in the low and high-order schemes, then conservation would have required
\begin{equation}
\begin{split}
(1-\alpha_{pq})\mb{F}^H_{p+\frac{1}{2}}+\alpha_{pq}\mb{F}^L_{p+\frac{1}{2}}&=(1-\alpha_{p+1,q})\mb{F}^H_{p+\frac{1}{2}}+\alpha_{p+1,q}\mb{F}^L_{p+\frac{1}{2}},\\
(1-\alpha_{pq})\mb{G}^H_{q+\frac{1}{2}}+\alpha_{pq}\mb{G}^L_{q+\frac{1}{2}}&=(1-\alpha_{p,q+1})\mb{G}^H_{q+\frac{1}{2}}+\alpha_{p,q+1}\mb{G}^L_{q+\frac{1}{2}},
\end{split}\label{eq:why.same.flux}
\end{equation}
where the superscripts $(\cdot)^H$, $(\cdot)^L$ denote the high and low-order schemes respectively. The above relations must hold for any values of the blending coefficients and hence $\mb{F}^H_{p+\frac{1}{2}}=\mb{F}^L_{p+\frac{1}{2}}$ and $\mb{G}^H_{q+\frac{1}{2}}=\mb{G}^L_{q+\frac{1}{2}}$ for all $p,q$.

\section{Discontinuity indicator model} \label{sec:discontinuity_indicator}
Following the ideas of Persson and Peraire~\cite{persson2006sub}, which are also adopted in~\cite{klockner2011viscous}, the authors in~\cite{hennemann2021provably} constructed an indicator model for the Euler equations which was later used in~\cite{babbar2024admissibility}. The idea is to expand the solution polynomial or some quantity derived from the components of the solution with respect to the Legendre orthogonal basis functions and then analyze the decay of the coefficients. In~\cite{hennemann2021provably, babbar2024admissibility}, the authors have used the product of fluid density and pressure as the indicator quantity for Euler equations.
However, in Section~\ref{sec: Numerical_verifications}, we will show through numerical experiments that this indicator model does not adequately capture the discontinuities of the RHD equations, leading to spurious oscillations. Hence, we propose a new indicator model specific to Gauss-Legendre solution points that do not contain endpoint values of the interval $[0,1]$.

Let us take the indicator quantity as the product of fluid density, pressure, and Lorentz factor, $K = \rho p \Gamma$, which will take care of the jumps in velocities as well as with those of pressure and density, primarily when the velocity is near the speed of light.
Unlike the indicator model of~\cite{hennemann2021provably}, where the authors have considered only the solution points, we take the boundary points $(x_{-1},y_j),\ (x_{N+1},y_j),\ (x_i,y_{-1}),\ (x_i,y_{N+1})$ for $-1\leq i,j\leq N+1$ of the reference element for $\Omega_{pq}$ also in consideration, where $(x_{-1}, x_{N+1})=(y_{-1},y_{N+1})=(0,1)$ and find the solution values at these points by extrapolating those from the neighboring elements and taking averages. This gives us a degree $N+2$ solution approximation within each element. Now, let $K$ be the indicator quantity, and let us take the modal representation of $K$ in terms of the Legendre polynomials $P_i$,
\begin{align} \label{eq:kh.defn}
K_h(\xi,\eta) = \sum^{N+2}_{i,j=0} \hat{K}_{ij} P_i(2\xi -1)P_j(2\eta -1),\hspace{1cm} \xi,\eta\in [0,1],
\end{align}
where the coefficients are given by,
\begin{equation} \label{eq:k.hat.defn}
\hat{K}_{ij} = \iint\limits_{[0,1]^2} K\big(\mb{u}_h(\xi,\eta)\big)P_i(2\xi -1)P_j(2\eta -1) \ud\xi \ud\eta.
\end{equation}
Here, $\mb{u}_h(\xi,\eta)$ is the degree $N+2$ polynomial approximation of the solution inside an element considering the solution values at all the solution points, including the boundary points. The integration is performed numerically by the Gauss-Legendre quadrature to get sufficient accuracy.

The coefficients $\hat{K}_{ij}$ of a smooth function decreases rapidly in comparison to non-smooth functions~\cite{persson2006sub,canuto2007spectral} and hence we introduce two matrix notations,
\begin{align}
\begin{split}
&K^{+1} = [\hat{K}_{ij}],\qquad 0\leq i,j \leq N+2,\\
&K^{-1} = [\hat{K}_{ij}],\qquad 0\leq i,j \leq N,
\end{split}
\end{align}
and take the ratio of the energy corresponding to the highest-frequency modes and the energy corresponding to all the modes except the lowest-frequency ones as,
\begin{align}
    E = \frac{\| K^{+1}\|^2 -\| K^{-1}\|^2}{\| K^{+1}\|^2 - \hat{K}_{0,0}^2},
\end{align}
where the norm $\| \cdot \| : M_{n\times n}(\mathbb{R})\to \mathbb{R}$, $n\in \mathbb{Z}$ is the Frobenius norm of a matrix defined as $\| M\| =\sqrt{\sum\limits_{i,j=1}^n M^2_{ij}}$. The lowest mode corresponds to a constant function, which does not affect the smoothness of the function. In~\cite{hennemann2021provably}, the degree-$N$ solution polynomial was used in~(\ref{eq:kh.defn},~\ref{eq:k.hat.defn}) and the constant mode was not removed. These are the fundamental differences in our proposed model.

In~\cite{hennemann2021provably}, the authors have converted their ratio of energy to a value inside $[0, 1]$ using a logistic function and a threshold, and then they have clipped the value of $\alpha'_e$. But here, we will clip on the value of $E$ itself. If $E>E_u$, we consider the solution to be discontinuous and set $\alpha'_e=\alpha'_{\text{max}}$, and if $E<E_l$, we consider it to be smooth and set $\alpha'_e= 0$. Between $E_l$ and $E_u$ we take $\alpha'_e$ as a smooth function with respect to $E$, given by,
\begin{equation}\label{eq:amax.defn}
\alpha'_e = \alpha'_{\text{max}} \sin\bigg(\frac{\pi}{2}y^2\bigg), \quad y= \frac{\log (E/E_l)}{\log (E_u/E_l)},
\end{equation}
so that the slope of the function increases slowly near $E_l$ compared to how it decreases near $E_u$ with both the slopes at $E_l$ and $E_u$ vanished. 
Here, $E_u$ and $E_l$ are taken to be $E_u=0.009$ and $E_l=\frac{E_u}{1000}$ respectively, determined through numerical experiments with the aim of controlling spurious oscillations while maintaining accuracy. Naturally, one may take $\alpha'_\text{max}=1$, but we can choose $\alpha'_\text{max}\in (0,1)$ to control the effect of the low-order scheme according to the specific needs of a problem. A discontinuity in one element can affect the solution in neighboring elements as well, since it can propagate into them during the next few time steps. Hence, we perform smoothing of the blending coefficient as $\alpha_e=\max\limits_{e_n}\{\alpha_e',0.5 \alpha_{e_n}\}$, where $e_n$ goes over all the elements having a common face with the element $\Omega_e$. We remark that we do not always reduce the scheme from high-order to first-order finite volume if an oscillatory solution is detected, but gradually blend the two schemes depending on how oscillatory the solution is locally.

\section{Admissibility of the solution}\label{sec: admissibility}
From Lemma~\ref{equivalent_admissibility_region}, the solution of the $d$-dimensional RHD equations has to be in the convex set $\Uad' \subset \mathbb{R}^{d+2}$ for it to be physically meaningful.  Regarding the admissibility preservation of the numerical schemes, we have the following two definitions~\cite{babbar2024admissibility},
    \begin{definition}
        For the flux reconstruction scheme to be admissibility preserving, it has to satisfy,
        \[
        (\mb{u}^e_{ij})^n\in \Uad'\ \ \ \forall e,i,j \implies (\mb{u}^e_{ij})^{n+1}\in \Uad' \ \ \ \forall e,i,j.
        \]
    \end{definition}
    \begin{definition}\label{defn:adm.means}
        For the flux reconstruction scheme to be admissibility preserving in means it has to satisfy,
        \[
        (\mb{u}^e_{ij})^n\in \Uad' \ \forall e,i,j \implies (\Bar{\mb{u}}^e)^{n+1}\in \Uad' \ \ \ \forall e.
        \]
    \end{definition}
    To make the blended scheme admissible, we first ensure that the scheme is admissible in means. From Section~\ref{sec:conservative_nature_section}, we can see that the element means of the solution of the blended scheme and the low-order scheme are the same, hence ensuring the admissibility in means of the low-order scheme is sufficient to ensure that for the blended scheme. 
    
    From Section~\ref{sec:conservative_nature_section}, we also see that the blended numerical flux that we use in the blended scheme has to be applied as the inter-element flux in the low-order scheme~\eqref{eq:loworder_scheme}, possibly violating the admissibility of the low-order evolutions. Specifically, we have the admissibility of the low-order evolutions $(\mb{u}_{ij}^e)^{L,n+1}$ for $1\leq i,j\leq N-1$~\eqref{eq:loworder_scheme} as we use the low-order flux for those solution points, and first-order finite volume method is admissibility preserving with the low-order inter-element flux,  \rev{see Theorem~\ref{theorem: admissility_rusanov_scheme}}. However, the points adjacent to the element faces require modification of the inter-element fluxes to make the evolutions to be admissible. 
    
    Following~\cite{babbar2024admissibility}, the process to make the evolution of solution at the solution point $(0,0)$ admissible is explained in the following steps. The point $(0,0)$ is a corner point of the element and thus requires modification of both $x, y$-directional fluxes. Therefore, the description below can be applied to ensure the admissibility of low-order evolution~\eqref{eq:loworder_scheme} at any solution point. For the implementation, the correction of $x,y$-directional fluxes will be intertwined with the $x$ and $y$-directional loops, respectively (see Algorithms 1,2 of~\cite{babbar2024admissibility}). However, we describe the flux correction of both $x,y$-directional fluxes without the interface loops for simplicity of explanation.
     \paragraph{Step 1.} We begin by taking the \revc{initial guesses of the blended numerical fluxes $\iniguessflux{\mb{F}}_{p-\frac{1}{2}}(y_0)$, $\iniguessflux{\mb{G}}_{q-\frac{1}{2}}(x_0)$ described in equation~\eqref{eq:initial.guess.blended.flux}} at the faces $p-\frac{1}{2}$, $q-\frac{1}{2}$ respectively, adjacent to the solution point $(0,0)$. Then take $k_x, k_y>0$ such that $k_x+k_y=1$. We compute low-order evolutions using the first-order finite volume method,
     \begin{align}\label{eq: directional low-order evolutions}
     \begin{split}
     \hat{\mb{u}}_{x,0}^{L,n+1} &= \big(\mb{u}_{0,0}^{p,q}\big)^{L,n} - \frac{\Delta t}{k_x w_0 \Delta x_{p}}\Big[\mb{f}^{L,p,q}_{\frac{1}{2}}(y_0)- \mb{f}^{\text{NF}}(\mb{u}^{p-1,q}_{N,0},\mb{u}^{p,q}_{0,0})\Big],\\
    \hat{\mb{u}}_{x,N}^{L,n+1} &= \big(\mb{u}_{N,0}^{p-1,q}\big)^{L,n} - \frac{\Delta t}{k_x w_\rev{N} \Delta x_{p-1}}\Big[\mb{f}^{\text{NF}}(\mb{u}^{p-1,q}_{N,0},\mb{u}^{p,q}_{0,0}) - \mb{f}^{L,p-1,q}_{N-\frac{1}{2}}(y_0)\Big],\\
    \hat{\mb{u}}_{y,0}^{L,n+1} &= \big(\mb{u}_{0,0}^{p,q}\big)^{L,n} - \frac{\Delta t}{k_y w_0 \Delta y_{q}}\Big[\mb{g}^{L,p,q}_{\frac{1}{2}}(x_0)- \mb{g}^{\text{NF}}(\mb{u}^{p,q-1}_{0,N},\mb{u}^{p,q}_{0,0})\Big],\\
    \hat{\mb{u}}_{y,N}^{L,n+1} &= \big(\mb{u}_{0,N}^{p,q-1}\big)^{L,n} - \frac{\Delta t}{k_y w_\rev{N} \Delta y_{q-1}}\Big[\mb{g}^{\text{NF}}(\mb{u}^{p,q-1}_{0,N},\mb{u}^{p,q}_{0,0}) - \mb{g}^{L,p,q-1}_{N-\frac{1}{2}}(x_0)\Big].
    \end{split}
     \end{align}
     These evolutions are admissible because of the admissibility of the first-order finite volume method \rev{(Remark~\ref{remark: admissility_rusanov_scheme})}. Following~\cite{babbar2024admissibility}, we take $k_x= k_y=\frac{1}{2}$. We now compute the low-order evolutions using the \revc{initial guess of the blended numerical fluxes as the inter-element fluxes},
     \begin{align}
     \begin{split}
    \mb{u}_{x,0}^{L,n+1} &= \big(\mb{u}_{0,0}^{p,q}\big)^{L,n} - \frac{\Delta t}{k_x w_0 \Delta x_{p}}\Big[\mb{f}^{L,p,q}_{\frac{1}{2}}(y_0)- \revc{\iniguessflux{\mb{F}}_{\norev{p-\frac{1}{2}}}}(y_0)\Big],\\
    \mb{u}_{x,N}^{L,n+1} &= \big(\mb{u}_{N,0}^{p-1,q}\big)^{L,n} - \frac{\Delta t}{k_x w_\rev{N} \Delta x_{p-1}}\Big[\revc{\iniguessflux{\mb{F}}_{\norev{p-\frac{1}{2}}}}(y_0) - \mb{f}^{L,p-1,q}_{N-\frac{1}{2}}(y_0)\Big],\\
    \mb{u}_{y,0}^{L,n+1} &= \big(\mb{u}_{0,0}^{p,q}\big)^{L,n} - \frac{\Delta t}{k_y w_0 \Delta y_{q}}\Big[\mb{g}^{L,p,q}_{\frac{1}{2}}(x_0)- \revc{\iniguessflux{\mb{G}}_{\norev{q-\frac{1}{2}}}}(x_0)\Big],\\
    \mb{u}_{y,N}^{L,n+1} &= \big(\mb{u}_{0,N}^{p,q-1}\big)^{L,n} - \frac{\Delta t}{k_y w_\rev{N} \Delta y_{q-1}}\Big[\revc{\iniguessflux{\mb{G}}_{\norev{q-\frac{1}{2}}}}(x_0) - \mb{g}^{L,p,q-1}_{N-\frac{1}{2}}(x_0)\Big].
    \end{split}
     \end{align}
     Our objective is to choose the blended numerical fluxes so that the above evolutions are admissible.
    \paragraph{Step 2.} If  $\mb{u}_{x,0}^{L,n+1}, \mb{u}_{x,N}^{L,n+1} \in \Uad'$, there is no need to change $\revc{\iniguessflux{\mb{F}}_{\norev{p-\frac{1}{2}}}}$ and we simply assign $\revc{\mb{F}}_{p-\frac{1}{2}}=\revc{\iniguessflux{\mb{F}}_{\norev{p-\frac{1}{2}}}}$. Again if $\mb{u}_{y,0}^{L,n+1}, \mb{u}_{y,N}^{L,n+1} \in \Uad'$, assign $\revc{\mb{G}}_{q-\frac{1}{2}}=\revc{\iniguessflux{\mb{G}}_{\norev{q-\frac{1}{2}}}}$. But if this is not the case, update the \revc{initial guess of the} blended numerical fluxes as follows,
    \begin{align}\label{eq:first.blended.flux.for.admissibility}
    \begin{split}
    \revc{\mb{F}}_{p-\frac{1}{2}}(y_0) = \theta_x \revc{\iniguessflux{\mb{F}}_{\norev{p-\frac{1}{2}}}}(y_0) + (1-\theta_x) \mb{f}^{\text{NF}}(\mb{u}^{p-1,q}_{N,0},\mb{u}^{p,q}_{0,0}),\\
    \revc{\mb{G}}_{q-\frac{1}{2}}(x_0) = \theta_y \revc{\iniguessflux{\mb{G}}_{\norev{q-\frac{1}{2}}}}(x_0) + (1-\theta_y) \mb{g}^{\text{NF}}(\mb{u}^{p,q-1}_{0,N},\mb{u}^{p,q}_{0,0}),
    \end{split}
    \end{align}
    where
    \begin{multline*}
    \theta_\kappa = \text{min}\Bigg\{\Bigg|\frac{\frac{1}{10}D\Big(\hat{\mb{u}}^{L,n+1}_{\kappa,0}\Big)-D\Big(\rev{\hat{\mb{u}}_{\kappa,0}^{L,n+1}} \Big)}{D\Big(\mb{u}^{L,n+1}_{\kappa,0}\Big)-D\Big(\hat{\mb{u}}_{\kappa,0}^{L,n+1} \Big)} \Bigg|, 
    \ \Bigg|\frac{\frac{1}{10}D\Big(\hat{\mb{u}}^{L,n+1}_{\kappa,N}\Big)-D\Big(\rev{\hat{\mb{u}}_{\kappa,N}^{L,n+1}} \Big)}{D\Big(\mb{u}^{L,n+1}_{\kappa,N}\Big)-D\Big(\hat{\mb{u}}_{\kappa,N}^{L,n+1} \Big)} \Bigg|,\ 1 \Bigg\},\qquad 
    \text{for}\ \kappa = x,y,
    \end{multline*}
    \revc{which is taken following~\cite{babbar2024admissibility}. The factor of $\frac{1}{10}$ is used following~\cite{rueda2021subcell} and this choice of $\theta_\kappa$ enforces a stricter condition than what is required for positivity (Theorem \ref{admissibility_theorem_fc}).}

    \paragraph{Step 3.} Repeat the last step replacing $D$ with $q$ and using the updated fluxes~\eqref{eq:first.blended.flux.for.admissibility} in place of $\revc{\iniguessflux{\mb{F}}_{\norev{p-\frac{1}{2}}}}$, $\revc{\iniguessflux{\mb{G}}_{\norev{q-\frac{1}{2}}}}$.
    
    \paragraph{Step 4.} Finally, the low-order evolutions are computed with the modified numerical fluxes as
\begin{align}
\begin{split}
    \mb{u}_{x,0}^{L,n+1} &= \big(\mb{u}_{0,0}^{p,q}\big)^{L,n} - \frac{\Delta t}{k_x w_0 \Delta x_{p}}\Big[\mb{f}^{L,p,q}_{\frac{1}{2}}(y_0)- \revc{\mb{F}}_{p-\frac{1}{2}}(y_0)\Big],\\
    \mb{u}_{y,0}^{L,n+1} &= \big(\mb{u}_{0,0}^{p,q}\big)^{L,n} - \frac{\Delta t}{k_y w_0 \Delta y_{q}}\Big[\mb{g}^{L,p,q}_{\frac{1}{2}}(x_0)- \revc{\mb{G}}_{q-\frac{1}{2}}(x_0)\Big],
\end{split}
\end{align}
where $\revc{\mb{F}}_{p-\frac{1}{2}}$, $\revc{\mb{G}}_{q-\frac{1}{2}}$ are the updated fluxes after performing Step~2 and Step~3, \revc{which we refer as the \textit{blended numerical fluxes}} and this solution belongs to the admissible set $\Uad'$ (refer to Theorem~\ref{admissibility_theorem_fc}).

Finally, calculate the required solution as,
\begin{equation}\label{eq:split_admissibility}
\big(\mb{u}_{0,0}^{p,q}\big)^{L,n+1} = k_x \mb{u}_{x,0}^{L,n+1} + k_y \mb{u}_{y,0}^{L,n+1}.
\end{equation}

Repeat these steps for all the required solution points.

\begin{remark}
\begin{enumerate}
\item To modify the flux \rev{as} $\revc{\mb{F}}_{p-\frac{1}{2}}(y_0)$ \rev{in the above steps}, we have considered both the adjacent solution points in the neighboring elements $\Omega_{p-1,q},\Omega_{pq}$ and so the steps need not be repeated during the correction of the inter-element flux for the other solution point. Similarly, for the case of $\revc{\mb{G}}_{q-\frac{1}{2}}(x_0)$. 
\item For the non-corner points, we need to carry out the above procedure to modify the numerical flux in one direction only, as for the other direction we are already using the low-order numerical flux.
\item We can carry out the above procedure to modify the inter-element fluxes inside the interface loops. Consequently, the process becomes less expensive in contrast to other works like~\cite{moe2017positivity} that require additional loops.
\rev{\item For a more optimal choice of $k_x, k_y$, one can refer to~\cite{cui2023classic,cui2024optimal}, where the authors investigate a similar context when studying optimal cell average decomposition. For the one-dimensional case, we do not need to split the solution as in equation~\eqref{eq:split_admissibility}, hence we can proceed without considering the splitting coefficients $k_x,k_y$ (for more details, refer to~\cite{babbar2024admissibility}).}
\end{enumerate}
\end{remark}

    Let us now state a theorem regarding admissibility.
    \begin{theorem} \label{admissibility_theorem_fc}
        The quantities, $\mb{u}_{x,0}^{L,n+1}$ and $\mb{u}_{y,0}^{L,n+1}$ in Step~4 of the inter-element flux correction procedure are admissible.
    \end{theorem}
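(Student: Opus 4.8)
The plan is to write each Step-4 evolution as a convex combination of two first-order finite-volume updates that share the same interior data, and then exploit the two structural facts at the heart of the admissible set $\Uad'$: the density $D$ is a linear component of $\mb{u}$, while $q$ is concave (Lemma~\ref{q_concave}). First I would record the governing algebraic identity. Because the interior sub-face flux $\mb{f}^{L,p,q}_{\frac{1}{2}}(y_0)$ and the starting state $(\mb{u}^{p,q}_{0,0})^{L,n}$ are identical whether we use the Rusanov flux $\mb{f}^{\text{NF}}$ or the initial guess $\iniguessflux{\mb{F}}_{p-\frac{1}{2}}$, and only the inter-element flux is being altered, the update is an affine function of that flux. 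Hence substituting the Step-2 corrected flux from~\eqref{eq:first.blended.flux.for.admissibility} into the $x$-update yields $\mb{u}_{x,0}^{(2)} = \theta_x\,\mb{u}_{x,0}^{\text{IG}} + (1-\theta_x)\,\hat{\mb{u}}_{x,0}^{L,n+1}$, where $\mb{u}_{x,0}^{\text{IG}}$ denotes the Step-1 update built with $\iniguessflux{\mb{F}}_{p-\frac{1}{2}}$ and $\hat{\mb{u}}_{x,0}^{L,n+1}$ is the Step-1 Rusanov update. The same identity holds at the companion point $\mb{u}_{x,N}^{L,n+1}$ in $\Omega_{p-1,q}$, since the shared face flux enters the two adjacent sub-elements with opposite signs. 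By Remark~\ref{remark: admissility_rusanov_scheme} both Rusanov updates lie in $\Uad'$, so they have strictly positive $D$ and $q$.

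Next I would control the density via Step 2. Because $D$ is the linear first component of $\mb{u}$, the identity gives $D(\mb{u}_{x,0}^{(2)}) = \theta_x\,D(\mb{u}_{x,0}^{\text{IG}}) + (1-\theta_x)\,D(\hat{\mb{u}}_{x,0}^{L,n+1})$, and likewise at the $N$-point. The weight $\theta_x$ in Step 2 is exactly the largest value in $[0,1]$ for which this linear interpolation stays at or above the stricter threshold $\tfrac{1}{10}D(\hat{\mb{u}}_{x,0}^{L,n+1})$ at both endpoints: solving the endpoint constraint $\theta_x\big[D(\mb{u}_{x,0}^{\text{IG}}) - D(\hat{\mb{u}}_{x,0}^{L,n+1})\big] \ge (\tfrac{1}{10}-1)D(\hat{\mb{u}}_{x,0}^{L,n+1})$ recovers precisely the two ratio terms in the displayed formula for $\theta_\kappa$, and taking their minimum (clipped to $1$) enforces both. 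Since $D(\hat{\mb{u}}_{x,0}^{L,n+1})>0$, this delivers $D(\mb{u}_{x,0}^{(2)}) \ge \tfrac{1}{10}D(\hat{\mb{u}}_{x,0}^{L,n+1}) > 0$. Linearity of $D$ is what permits the scalar threshold to be solved exactly here.

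Then I would treat $q$ in Step 3, which repeats the correction but now blends the Step-2 state $\mb{u}_{x,0}^{(2)}$ against the Rusanov state, producing $\mb{u}_{x,0}^{L,n+1} = \theta_x'\,\mb{u}_{x,0}^{(2)} + (1-\theta_x')\,\hat{\mb{u}}_{x,0}^{L,n+1}$. As $q$ is only concave, I would replace the exact interpolation by the bound $q(\mb{u}_{x,0}^{L,n+1}) \ge \theta_x'\,q(\mb{u}_{x,0}^{(2)}) + (1-\theta_x')\,q(\hat{\mb{u}}_{x,0}^{L,n+1})$ furnished by Lemma~\ref{q_concave}. Step 3 fixes $\theta_x'$ by the same formula with $D$ replaced by $q$, i.e.\ so that this linear lower bound equals $\tfrac{1}{10}q(\hat{\mb{u}}_{x,0}^{L,n+1})$; concavity then makes the choice conservative and guarantees $q(\mb{u}_{x,0}^{L,n+1}) \ge \tfrac{1}{10}q(\hat{\mb{u}}_{x,0}^{L,n+1}) > 0$. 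Moreover the density bound survives Step 3: it is again a convex combination, now of $\mb{u}_{x,0}^{(2)}$ (with $D \ge \tfrac{1}{10}D(\hat{\mb{u}}_{x,0}^{L,n+1})$) and $\hat{\mb{u}}_{x,0}^{L,n+1}$ (with $D>0$), so linearity gives $D(\mb{u}_{x,0}^{L,n+1}) \ge \tfrac{1}{10}D(\hat{\mb{u}}_{x,0}^{L,n+1}) > 0$ as well. Hence both constraints defining $\Uad'$ hold, so $\mb{u}_{x,0}^{L,n+1}\in\Uad'$, and the verbatim argument in the $y$-direction yields $\mb{u}_{y,0}^{L,n+1}\in\Uad'$.

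The main obstacle is not any single estimate but the bookkeeping of the two composed corrections. I must verify that the Step-3 blend for $q$ does not spoil the positivity of $D$ secured in Step 2, and that a single shared inter-element flux simultaneously keeps the updates on both sides of the face admissible, which is exactly why $\theta_\kappa$ is a minimum over the two endpoint ratios. The other delicate point is that the threshold formula for $\theta_\kappa$ is derived as though the constrained quantity were linear, so its validity for $q$ rests entirely on converting that linear interpolation into a genuine lower bound through the concavity in Lemma~\ref{q_concave}.
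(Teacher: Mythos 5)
Your proposal is correct and follows essentially the same route as the paper's proof: write each corrected update as a convex combination $\theta_\kappa\,(\text{initial-guess update}) + (1-\theta_\kappa)\,(\text{Rusanov update})$, invoke admissibility of the Rusanov evolutions, and use linearity of $D$ (Step 2) and concavity of $q$ via Lemma~\ref{q_concave} (Step 3) together with the definition of $\theta_\kappa$ to secure the $\tfrac{1}{10}$ lower bounds. Your explicit check that the Step-3 blend preserves the $D$-positivity from Step 2 is a detail the paper leaves implicit, but it is the same argument.
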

    \begin{proof}
        Clearly $D$ is a concave function of the conservative variables, consequently for the low-order evolutions $\mb{u}_{\kappa,0}^{L,n+1}$, $\kappa = x, y$ after the Step~2 we have,
        \begin{align*}              
        D\Big(\mb{u}_{\kappa,0}^{L,n+1}\Big)
        &=D\Big(\theta_\kappa \mb{u}_{\kappa,0}^{L,n+1,\text{old}}+(1-\theta_\kappa) \hat{\mb{u}}_{\kappa,0}^{L,n+1}\Big)\\
        &\geq \theta_\kappa D\Big(\mb{u}_{\kappa,0}^{L,n+1,\text{old}}\Big) + (1-\theta_\kappa) D\Big(\hat{\mb{u}}_{\kappa,0}^{L,n+1}\Big)\\
        &> \frac{1}{10} \bigg(D\Big(\hat{\mb{u}}_{\kappa,0}^{L,n+1}\Big)\bigg),
        \end{align*}
        
        where `old' is used in the superscript to denote the quantity computed before the flux correction for constraint $D$ is applied, and $\hat{(\cdot)}$ denotes the update using the low-order inter-element flux.

        Again, from Lemma~\ref{q_concave}, we have that $q$ is also a concave function of the conservative variables, and hence we can re-write every step of the above procedure using $q$ in place of $D$ for the low-order evolutions after Step~3.
    \end{proof}
Thus we have the solution, computed using the low-order scheme in the admissibility region, giving the admissibility in means of the low-order scheme. Finally, using the scaling limiter from~\cite{zhang2010maximum} and using the fact that the element means of the solution is in the admissibility region, we can make the solution admissible at all solution points.
\section{Numerical verifications}\label{sec: Numerical_verifications}
In this section, several numerical simulations are run with the above scheme to demonstrate its accuracy and robustness. \revc{The size of the time step is calculated as,
\begin{equation} \label{eq: time_step.fourier}
    \Delta t= l_s \text{CFL}(N)\min_{pq}{\left(\frac{r\left(\mb{f}'(\Bar{\mb{u}}_{pq})\right)}{\Delta x_p}+ \frac{r\left(\mb{g}'(\Bar{\mb{u}}_{pq})\right)}{\Delta y_q}\right)^{-1}}
\end{equation}
where, $r(\cdot)$ denotes the spectral radius and $l_s\leq 1$ denotes a safety factor~\cite{babbar2024admissibility}. Here the minimum is taken over all the elements $\Omega_{pq}$ and $\text{CFL}(N)$ is the optimal CFL number for degree $N$ solution polynomial, which is obtained in~\cite{BABBAR2022111423} by Fourier stability analysis; one can refer to Table 1 and Table 2 of~\cite{BABBAR2022111423} for the CFL numbers in one and two dimensions respectively. Note that this time step restriction does not guarantee the admissibility of the low-order evolutions in~\eqref{eq:split_admissibility}, which need the CFL type condition as in Remark~\ref{remark: admissility_rusanov_scheme}. But in our tests, all the simulations could be performed with time steps calculated in~\eqref{eq: time_step.fourier}; in case the solution is not admissible in some element, the update can be re-computed with a reduced time step.}

Here we present the results using one and two-dimensional schemes. The simulations use specific heat ratio $\gamma = \frac{5}{3}$ in all test cases and $\alpha'_{\text{max}}=1$~\eqref{eq:amax.defn} unless stated otherwise. \revc{We use the safety factor, $l_s = 0.95$ for all the test cases unless mentioned otherwise.} The numerical experiments are performed by making additions to \texttt{Tenkai.jl}~\cite{tenkai}, which include the proposed smoothness indicator (Section~\ref{sec:discontinuity_indicator}) and capability to solve the RHD equations.

\subsection{1-D simulations}
Before going to the test cases with discontinuities, let us check the accuracy of the proposed scheme with two smooth test cases by performing grid convergence studies.

\subsubsection{First smooth problem}

\begin{table}[ht]
    \centering
    \begin{tabular}{|l|l|l|l|l|l|l|}
    \hline
    \multirow{2}{*}{No. of cells} & \multicolumn{6}{|c|}{\rev{Degree $N=3$}} \\
    \cline{2-7}
    & $L^1$ error & Order & $L^2$ error & Order  & $L^{\infty}$ error & Order\\
    \hline
 8   &    3.07898e-05     &      -             &          3.94642e-05       &        -           &          8.67502e-05           &          -\\ 
16   &    1.75443e-06     &      4.13338       &      2.11296e-06      &       4.22321      &       4.36599e-06        &          4.31249\\
32   &    1.15319e-07     &      3.92729       &      1.33483e-07      &       3.98454      &       2.61541e-07        &          4.06120\\
64   &    7.20460e-09     &      4.00057       &      8.24737e-09      &       4.01657      &       1.56746e-08        &          4.06054\\
128   &    4.35338e-10     &      4.04871       &      4.95295e-10      &       4.05757      &       9.25566e-10        &          4.08195\\
         \hline
    \end{tabular}
    \vspace{3pt}
    \caption{Numerical results for the fluid density using the \rev{scheme with degree $N=3$}.}
    \label{table1}
\end{table}

\begin{table}[htbp]
    \centering
    \begin{tabular}{|l|l|l|l|l|l|l|}
    \hline
    \multirow{2}{*}{No. of cells} & \multicolumn{6}{|c|}{\rev{Degree $N=4$}} \\
    \cline{2-7}
    & $L^1$ error & Order & $L^2$ error & Order  & $L^{\infty}$ error & Order\\
    \hline
 8   &    2.72454e-06     &      -             &          3.18900e-06       &        -           &          5.70499e-06           &          -\\ 
16   &    9.82598e-08     &      4.79327       &      1.13125e-07      &       4.81711      &       1.86065e-07        &          4.93835\\
32   &    3.17686e-09     &      4.95093       &      3.67525e-09      &       4.94393      &       6.35705e-09        &          4.87131\\
64   &    1.00755e-10     &      4.97868       &      1.16327e-10      &       4.98158      &       2.02323e-10        &          4.97362\\
128   &    3.18210e-12     &      4.98472       &      3.66787e-12      &       4.98711      &       6.38423e-12        &          4.98601\\
         \hline
    \end{tabular}
    \vspace{3pt}
    \caption{Numerical results for the fluid density using the \rev{scheme with degree $N=4$}.}
    \label{table2}
\end{table}
Consider the domain of computation to be $[0, 1]$ with periodic boundary conditions. At the initial time, let the fluid have density $\rho(x,0)=2+\sin(2\pi x)$ moving with a velocity $v_1(x,0) = 0.5$ and let the initial pressure in the whole domain be unity. At time $t$, the fluid density will change to $\rho(x,t)=2+\sin\big(2\pi (x-0.5 t)\big)$, and the other quantities will remain unchanged. We present the results of the simulations in terms of the error norms at time $t=2.0$ in Table~\ref{table1} and Table~\ref{table2}; we see that the methods converge at the rate of $O(\Delta x)^{N+1}$ for solution polynomial degrees $N=3,4$.

\reva{
\begin{table}[htbp]
    \centering
    \begin{tabular}{|l|l|l|l|l|l|l|}
    \hline
    \multirow{2}{*}{No. of cells} & \multicolumn{6}{|c|}{\rev{Degree $N=3$}} \\
    \cline{2-7}
    & $L^1$ error & Order & $L^2$ error & Order  & $L^{\infty}$ error & Order\\
\hline
50   &    7.73934e-05     &      -             &          3.69156e-04       &        -           &          3.22987e-03           &          -\\ 
100   &    4.26847e-06     &      4.18042       &      2.89081e-05      &       3.67468      &       4.10458e-04        &          2.97617\\
150   &    7.47121e-07     &      4.29823       &      5.09525e-06      &       4.28104      &       6.93707e-05        &          4.38462\\
200   &    2.21167e-07     &      4.23144       &      1.54992e-06      &       4.13688      &       1.89548e-05        &          4.50986\\
250   &    9.13198e-08     &      3.96404       &      6.30260e-07      &       4.03249      &       7.60146e-06        &          4.09475\\
300   &    4.28780e-08     &      4.14657       &      3.00045e-07      &       4.07084      &       4.06040e-06        &          3.43930\\
350   &    2.25800e-08     &      4.16018       &      1.60143e-07      &       4.07305      &       2.37870e-06        &          3.46885\\
400   &    1.35011e-08     &      3.85145       &      9.40446e-08      &       3.98632      &       1.50103e-06        &          3.44790\\
450   &    8.61042e-09     &      3.81889       &      5.82451e-08      &       4.06772      &       8.89326e-07        &          4.44413\\
500   &    5.70038e-09     &      3.91456       &      3.85174e-08      &       3.92511      &       6.24703e-07        &          3.35218\\
550   &    3.84508e-09     &      4.13112       &      2.61789e-08      &       4.05159      &       4.06400e-07        &          4.51094\\
\hline
    \end{tabular}
    \vspace{3pt}
    \caption{Numerical results for the fluid density using the \rev{scheme with degree $N=3$}.}
    \label{table: 1Disen_table1}
\end{table}
\begin{table}[htbp]
    \centering
    \begin{tabular}{|l|l|l|l|l|l|l|}
    \hline
    \multirow{2}{*}{No. of cells} & \multicolumn{6}{|c|}{\rev{Degree $N=4$}} \\
    \cline{2-7}
    & $L^1$ error & Order & $L^2$ error & Order  & $L^{\infty}$ error & Order\\
\hline
50   &    1.54653e-05     &      -             &          8.20833e-05       &        -           &          8.97655e-04           &          -\\ 
100   &    4.43904e-07     &      5.12265       &      2.74783e-06      &       4.90072      &       3.87646e-05        &          4.53335\\
150   &    6.85380e-08     &      4.60760       &      4.35908e-07      &       4.54080      &       4.96279e-06        &          5.06958\\
200   &    1.67303e-08     &      4.90181       &      1.16190e-07      &       4.59605      &       1.93661e-06        &          3.27108\\
250   &    5.46325e-09     &      5.01551       &      4.00874e-08      &       4.76898      &       6.33049e-07        &          5.01088\\
300   &    2.32497e-09     &      4.68587       &      1.67012e-08      &       4.80242      &       2.57334e-07        &          4.93729\\
350   &    9.99547e-10     &      5.47622       &      7.82249e-09      &       4.92035      &       1.31791e-07        &          4.34094\\
400   &    5.40976e-10     &      4.59763       &      4.07498e-09      &       4.88378      &       7.71896e-08        &          4.00617\\
450   &    3.07294e-10     &      4.80178       &      2.28215e-09      &       4.92215      &       4.31989e-08        &          4.92812\\
500   &    1.81732e-10     &      4.98547       &      1.34171e-09      &       5.04151      &       2.37150e-08        &          5.69195\\
\hline
    \end{tabular}
    \vspace{3pt}
    \caption{Numerical results for the fluid density using the \rev{scheme with degree $N=4$}.}
    \label{table: 1Disen_table2}
\end{table}
\subsubsection{Isentropic smooth flow problem}
We now consider one more test case to check the accuracy of our scheme which is non-trivial compared to the last case, where the solution was just advected. This test is used in~\cite{zhang2006ram,marti2015grid,bhoriya2020entropy} to check the convergence of the high-order schemes, and a similar test case can also be found in~\cite{colella2006cartesian}. In this test, we introduce a symmetric pulse in the initial profiles of fluid density, velocity, and pressure; which moves in a nontrivial way with time, forming a very high gradient structure on one side, which ultimately forms a shock. Here we consider the case till time $t=0.8$ before the shock formation time, where the solution is still smooth. The initial fluid density is
\[
    \rho(x) = \rho_{\text{ref}} [1+ \alpha f(x)]
\]
where
\[
\rho_{\text{ref}} = 1, \qquad v_{\text{ref}}=0, \qquad p_{\text{ref}} = 100, \qquad
    f(x) =\begin{cases}
        \left[\left( \frac{x}{L}\right)^2 - 1 \right]^4, &\textrm{if}\ |x|<L\\
        0, & \textrm{otherwise}
    \end{cases}
\]
with the amplitude and width of the pulse being $\alpha = 1$ and $L = 0.3$, respectively. The initial pressure is computed from $p = C \rho^\gamma$ assuming the quantity $C$ as a constant, and the initial velocity inside the pulse is computed assuming the Riemann invariant
\[
    J = \frac{1}{2} \ln{\left(\frac{1+v_1}{1-v_1}\right)} - \frac{1}{\sqrt{\gamma - 1}} \ln{\left(\frac{\sqrt{\gamma - 1}+ s}{\sqrt{\gamma - 1} - s}\right)}, \qquad s = \textrm{speed of sound}
\]
is constant throughout the computational domain $[-0.35, 1]$. The exact solution can be found by the standard characteristic analysis~\cite{zhang2006ram, morsony2007temporal}.

From Table~\ref{table: 1Disen_table1} and Table~\ref{table: 1Disen_table2} we can see that the method with solution polynomial degree $N=3,4$ converges with the expected rate of $O(\Delta x)^{N+1}$. We have also shown the solutions with $50$ cells in Figure~\ref{1D_isentropic} for graphical visualisation of the result.

\begin{figure}[htbp]
    \centering
    
    \begin{subfigure}{0.49\textwidth}
    \includegraphics[width=\linewidth]{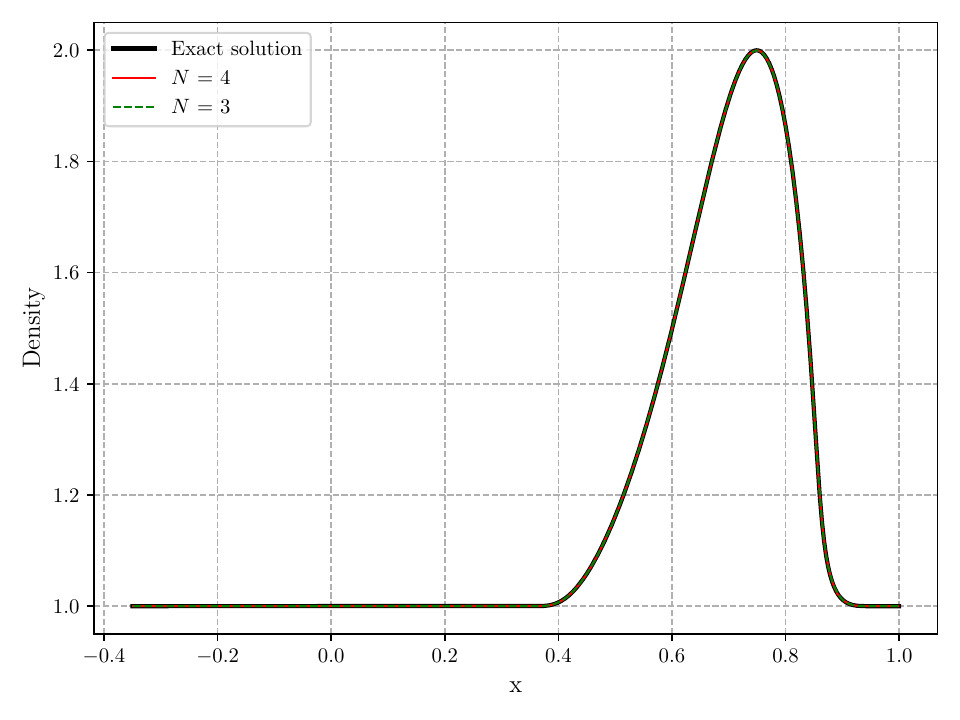}
    \caption{Density plot.}
    \end{subfigure}
    \begin{subfigure}{0.49\textwidth}
    \includegraphics[width=\linewidth]{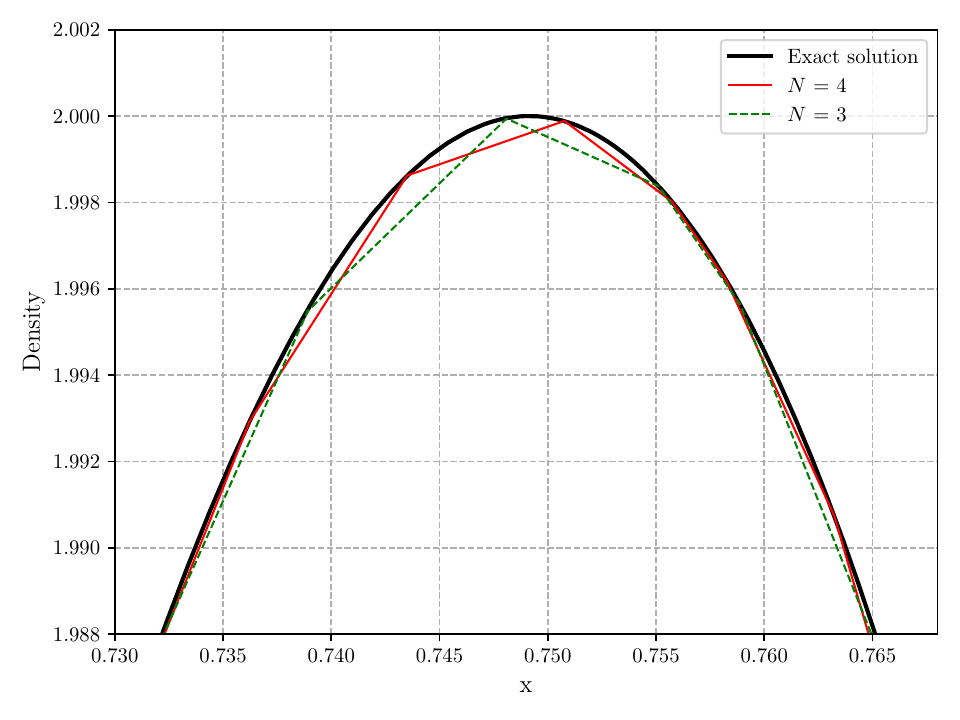}
    \caption{Density plot by zooming in $[0.730, 0.768]\times [1.988, 2.002]$.}
    \end{subfigure}
    \caption{Isentropic smooth flow problem in 1D: Plot of fluid density \rev{using the scheme with degrees $N=3,4$} and $50$ cells.}\label{1D_isentropic}
\end{figure}

}

\begin{figure}[htbp]
    \centering
    
    \begin{subfigure}{0.49\textwidth}
    \includegraphics[width=\linewidth]{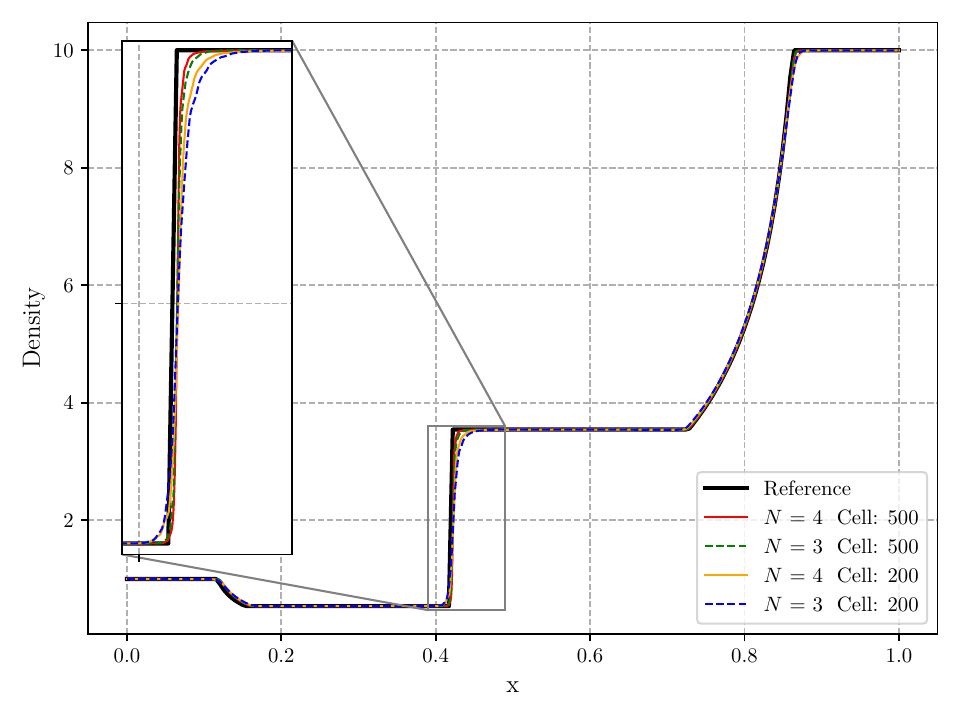}
    \caption{Density plot.}
    \end{subfigure}
    \begin{subfigure}{0.49\textwidth}
    \includegraphics[width=\linewidth]{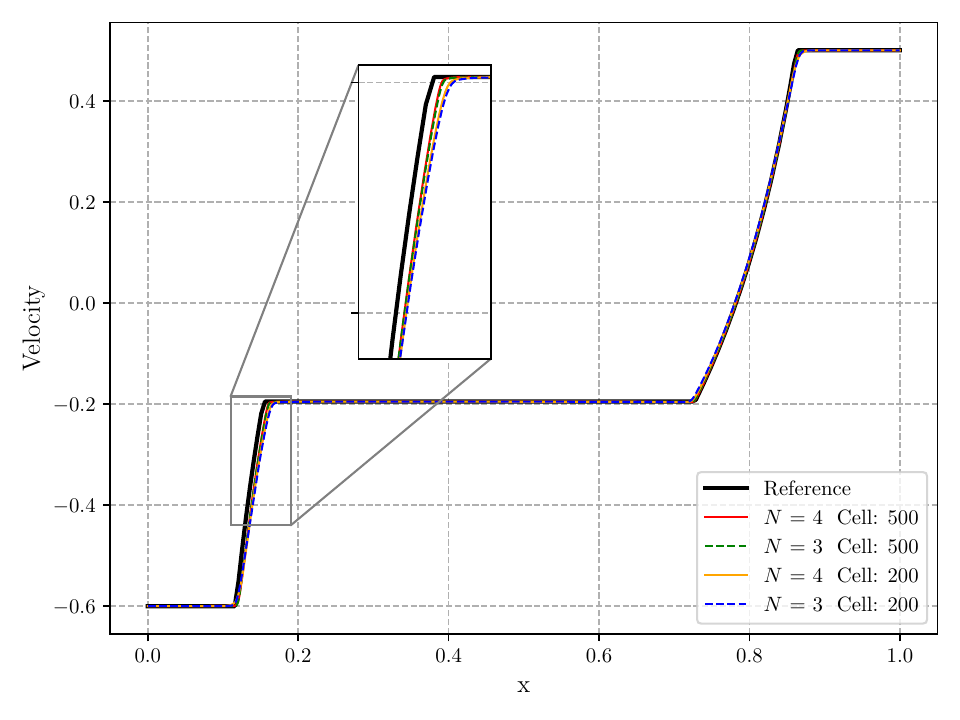}
    \caption{Velocity plot.}
    \end{subfigure}
    \begin{subfigure}{0.49\textwidth}
    \includegraphics[width=\linewidth]{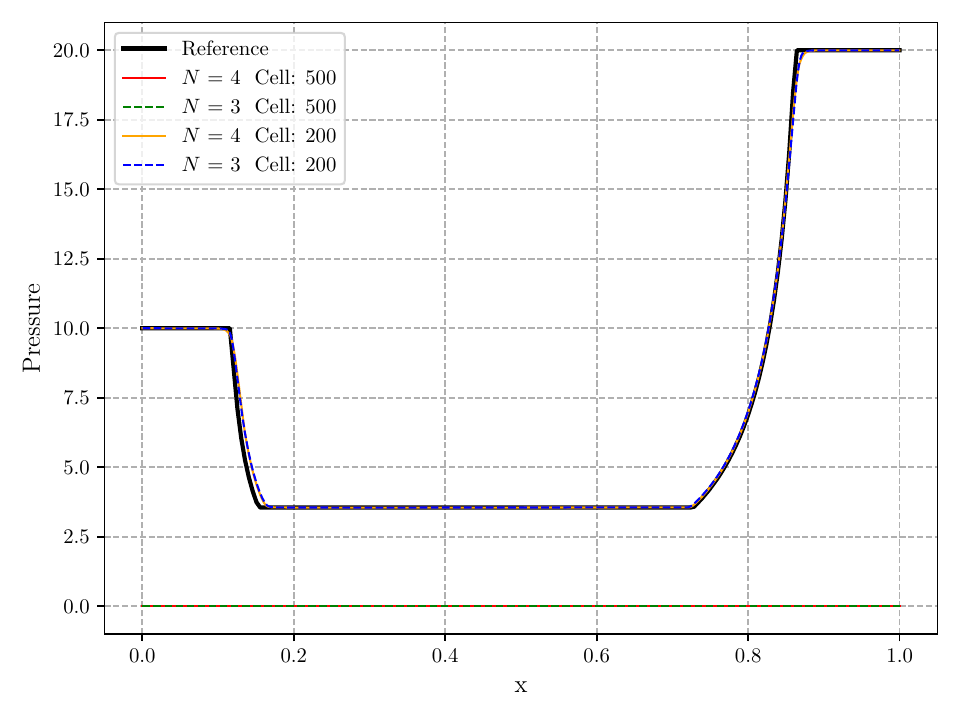}
    \caption{Pressure plot.}
    \end{subfigure}
    \caption{First Riemann problem in 1D: Plot of fluid density, pressure, and velocity \rev{using the scheme with degrees $N=3,4$} with $200$ and $500$ cells.}\label{RP1}
\end{figure}
\subsubsection{First Riemann problem}
This test case is taken from~\cite{mignone2005hllc}, where the discontinuity evolves with time, forming two rarefaction waves moving in the left and right directions and a contact discontinuity in the middle. The initial state of the fluid is given by,
\[
(\rho, v_1, p)= \begin{cases}
    (1, -0.6, 10) &  \textrm{if } x<0.5\\
    (10, 0.5, 20) &  \textrm{if } x>0.5.
\end{cases}
\]
We consider the outflow boundary conditions at the endpoints, $x=0,1$. The simulation is run till time $t=0.4$ \rev{using the scheme with polynomial degrees $N=3,4$ and with $200$ and $500$ cells}. The comparison is shown in Figure~\ref{RP1}. For this test case, we have found the exact solution following~\cite{marti2003numerical}, which we have used as a reference solution.

From Figure~\ref{RP1}, we observe that the solution approaches the reference solution with increasing number of cells and solution polynomial degree. We also observe that the scheme can adequately capture the rarefaction and contact waves.

\begin{figure}[htbp]
    \centering
    
    \begin{subfigure}{0.49\textwidth}
    \includegraphics[width=\linewidth]{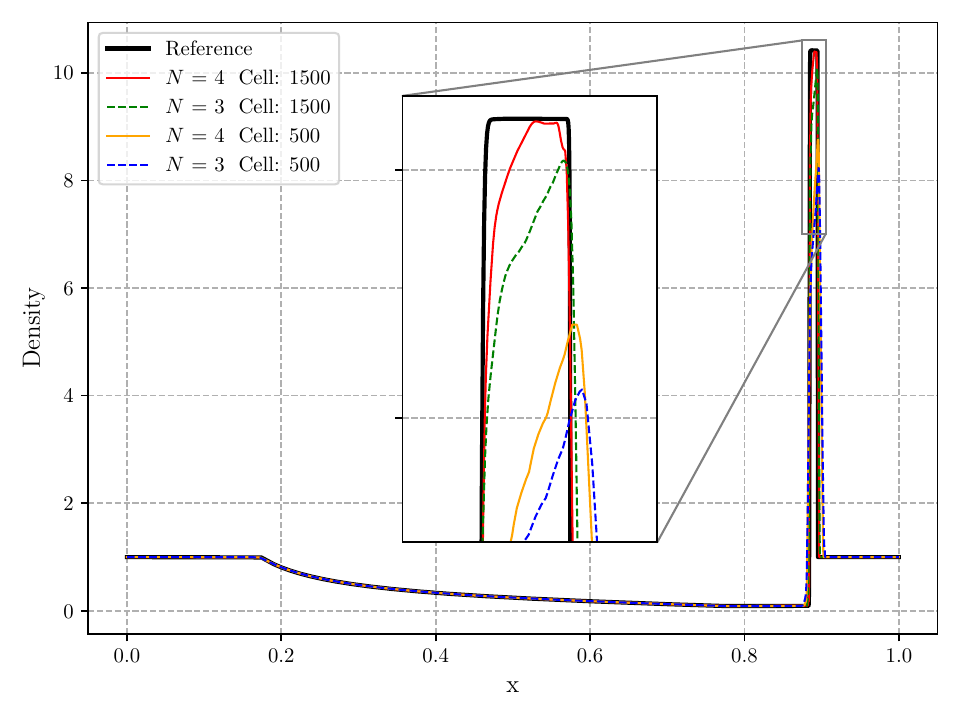}
    \caption{Density plot.}
    \end{subfigure}
    \begin{subfigure}{0.49\textwidth}
    \includegraphics[width=\linewidth]{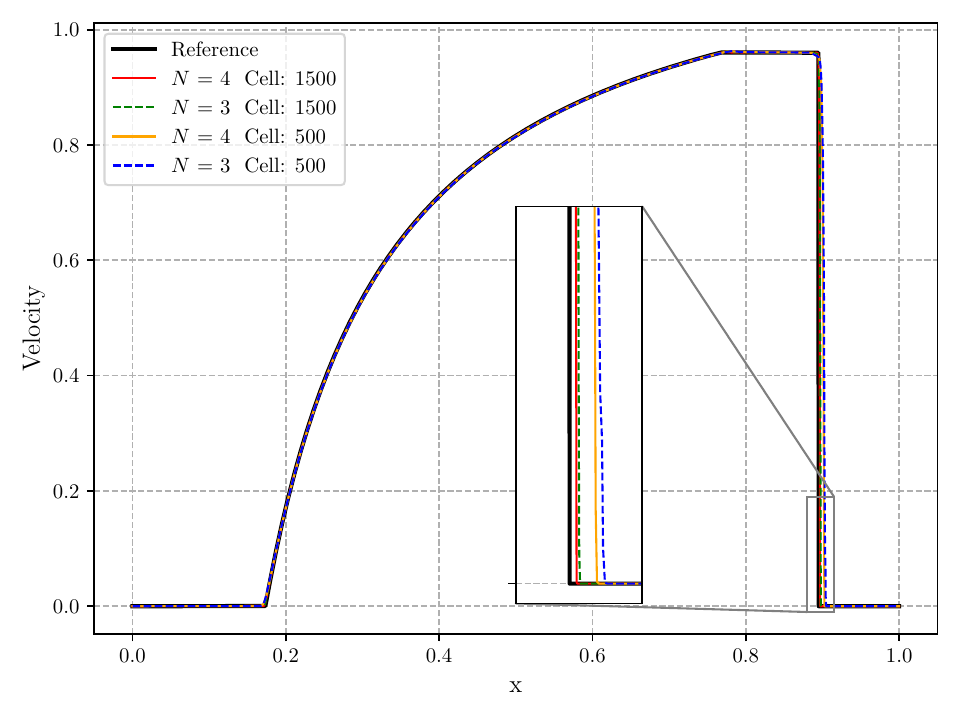}
    \caption{Velocity plot.}
    \end{subfigure}
    \begin{subfigure}{0.49\textwidth}
    \includegraphics[width=\linewidth]{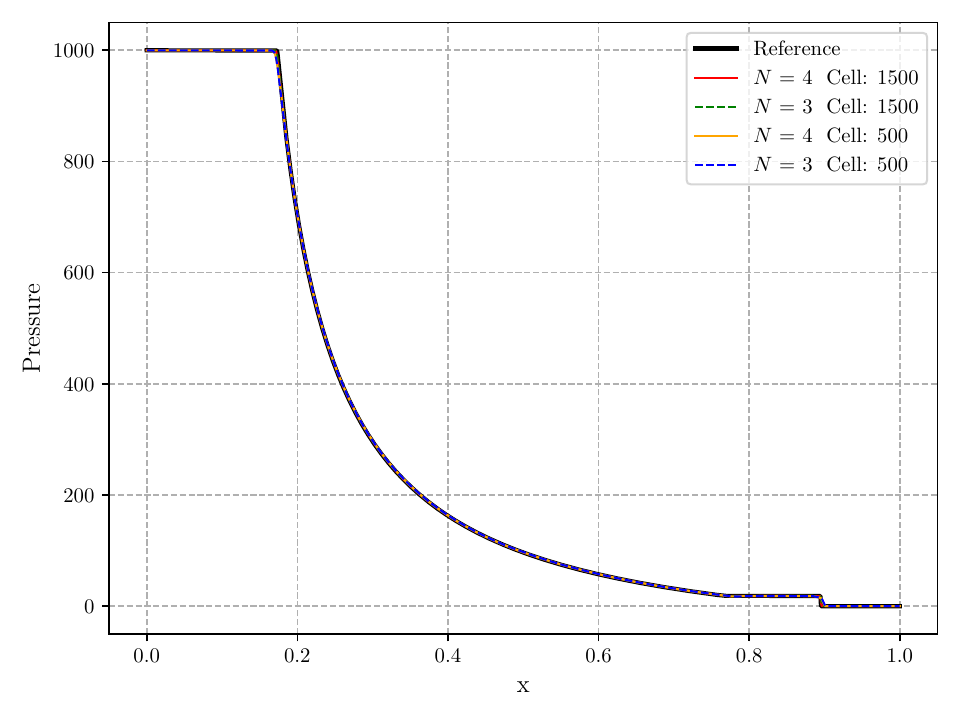}
    \caption{Pressure plot.}
    \end{subfigure}
    \caption{Second Riemann problem in 1D: Plot of fluid density, pressure, and velocity \rev{using the scheme with polynomial degrees $N=3,4$} with $500$ and $1500$ cells.}
    \label{RP2}
\end{figure}
\subsubsection{Second Riemann problem}
We consider this test case from~\cite{marti2003numerical}, which is a good test to check the accuracy of the scheme as the post-shock state in the solution has a narrow strip, which is difficult to capture with diffusive schemes. We compare our results with the results obtained by a second-order finite volume scheme~\cite{van1984relation, berthon2006muscl} using $100000$ cells as the reference solution. The initial condition is given by,
\[
(\rho, v_1, p)= \begin{cases}
    (1, 0, 10^3) & \text{if}\ x<0.5\\
    (1, 0, 10^{-2}) & \text{if}\ x>0.5
\end{cases}
\]
with the boundaries as the outflow boundaries. We show the results in Figure~\ref{RP2} after running the simulation till $t=0.4$ \rev{using the scheme with polynomial degrees $N=3,4$} with $500$ and $1500$ cells to capture the peak of the narrow strip.
 
We observe that our solution converges to the reference solution, attaining the narrow strip in the density profile by increasing the number of cells.

\subsubsection{Third Riemann problem}
This test case is again taken from~\cite{mignone2005hllc}, which has rarefaction, contact, and shock waves in its solution, making it a good test case to check the robustness of the scheme. For this case as well, we have found the exact solution following~\cite{marti2003numerical} and used it as the reference solution. The initial state of the fluid is given by,
\[
(\rho, v_1, p)= \begin{cases}
    (10, 0, \frac{40}{3}) & \text{if}\ x<0.5\\
    (1, 0, 10^{-6}) & \text{if}\ x>0.5
\end{cases}
\]
with the outflow boundary conditions at $x=0,1$. We have run the simulation till time $t=0.4$ \rev{using the scheme with polynomial degrees $N=3,4$} with $200$ and $500$ cells and $l_s = 0.75$. The comparison is shown in Figure~\ref{RP3}.
\begin{figure}[htbp]
    \centering
    \begin{subfigure}{0.49\textwidth}
    \includegraphics[width=\linewidth]{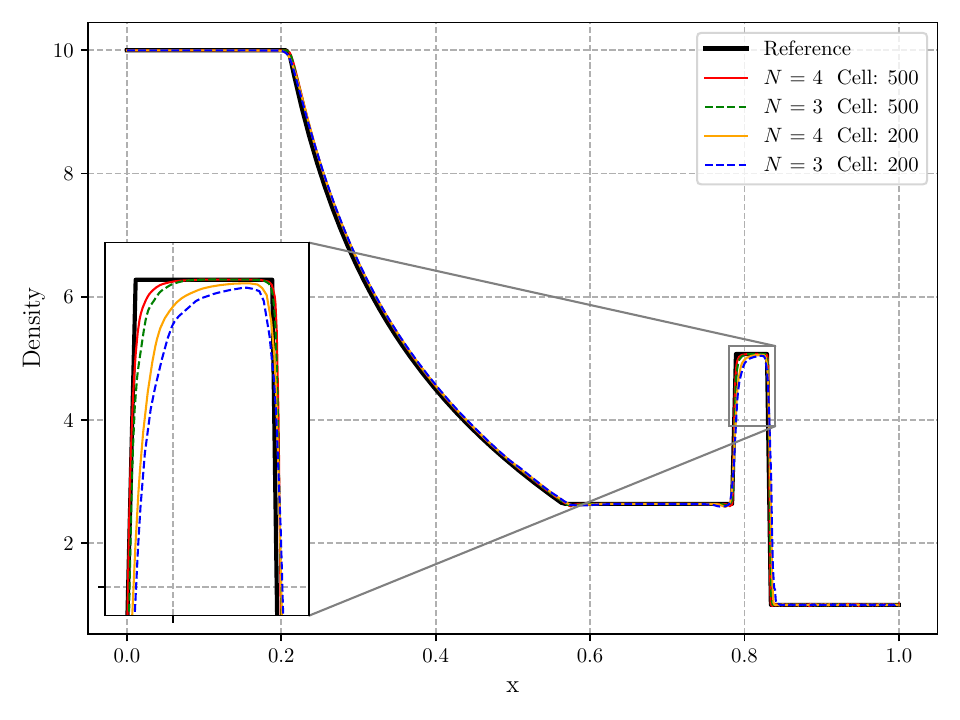}
    \caption{Density plot.}
    \end{subfigure}
    \begin{subfigure}{0.49\textwidth}
    \includegraphics[width=\linewidth]{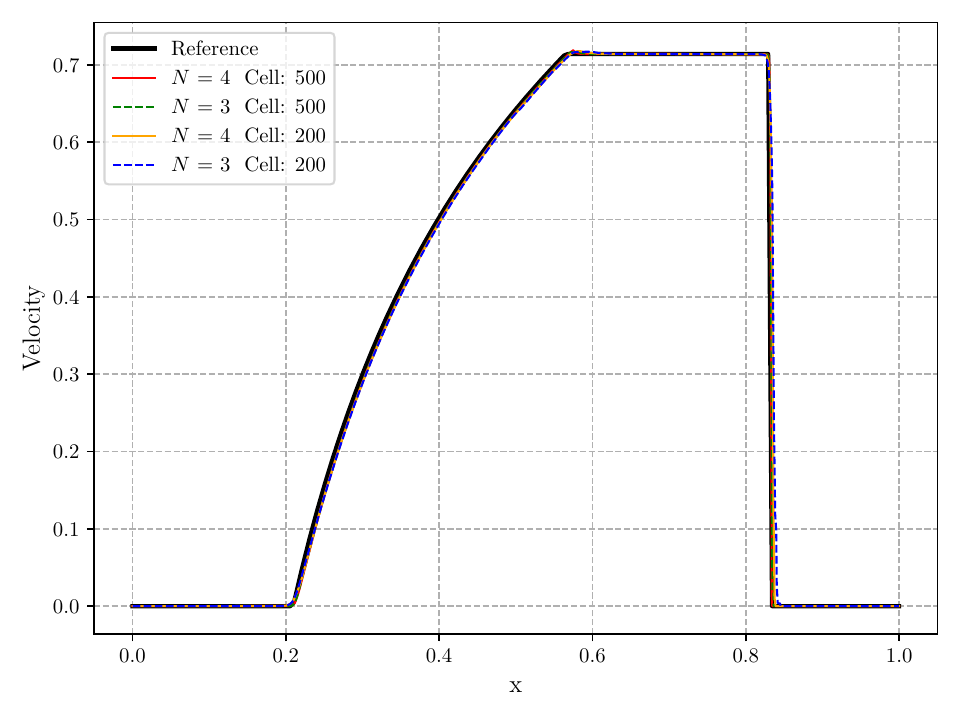}
    \caption{Velocity plot.}
    \end{subfigure}
    \begin{subfigure}{0.49\textwidth}
    \includegraphics[width=\linewidth]{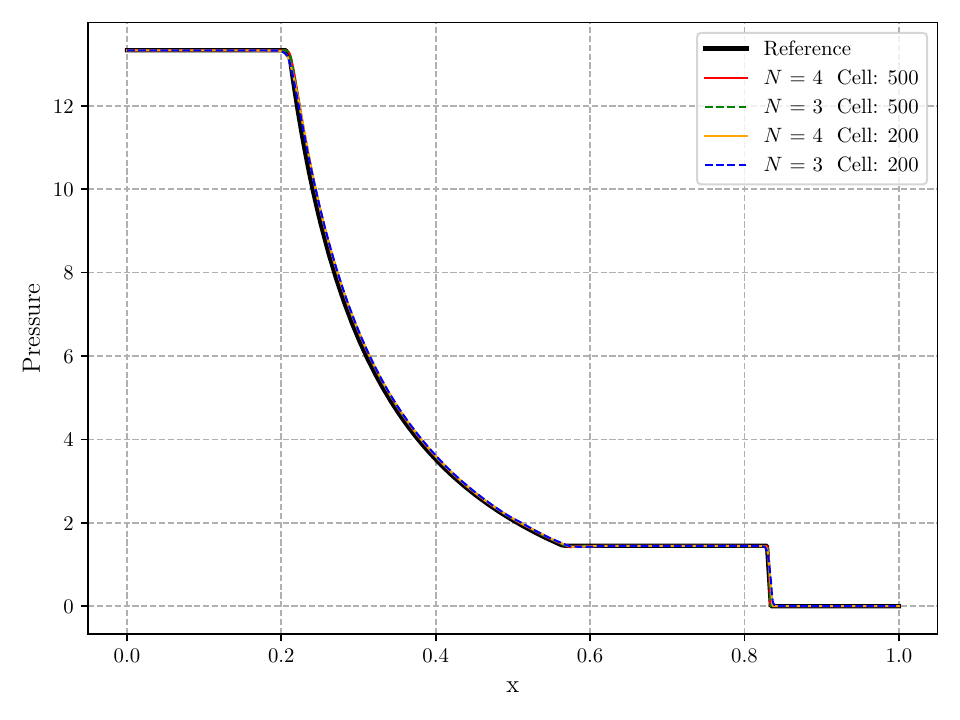}
    \caption{Pressure plot.}
    \end{subfigure}
    \caption{Third Riemann problem in 1D: Plot of fluid density, pressure, and velocity \rev{using the scheme with polynomial degrees $N=3,4$} with $200$ and $500$ cells.}
    \label{RP3}
\end{figure}

\begin{figure}[htbp]
    \centering
    
    \begin{subfigure}{0.49\textwidth}
    \includegraphics[width=\linewidth]{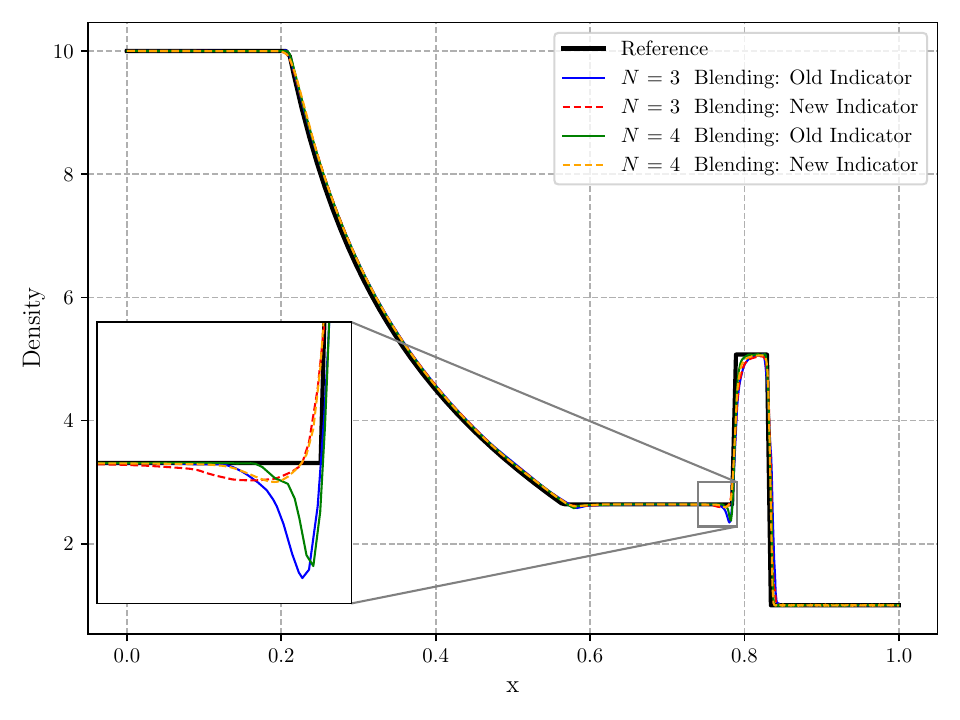}
    \caption{Density plot with $200$ cells.}
    \end{subfigure}
    \begin{subfigure}{0.49\textwidth}
    \includegraphics[width=\linewidth]{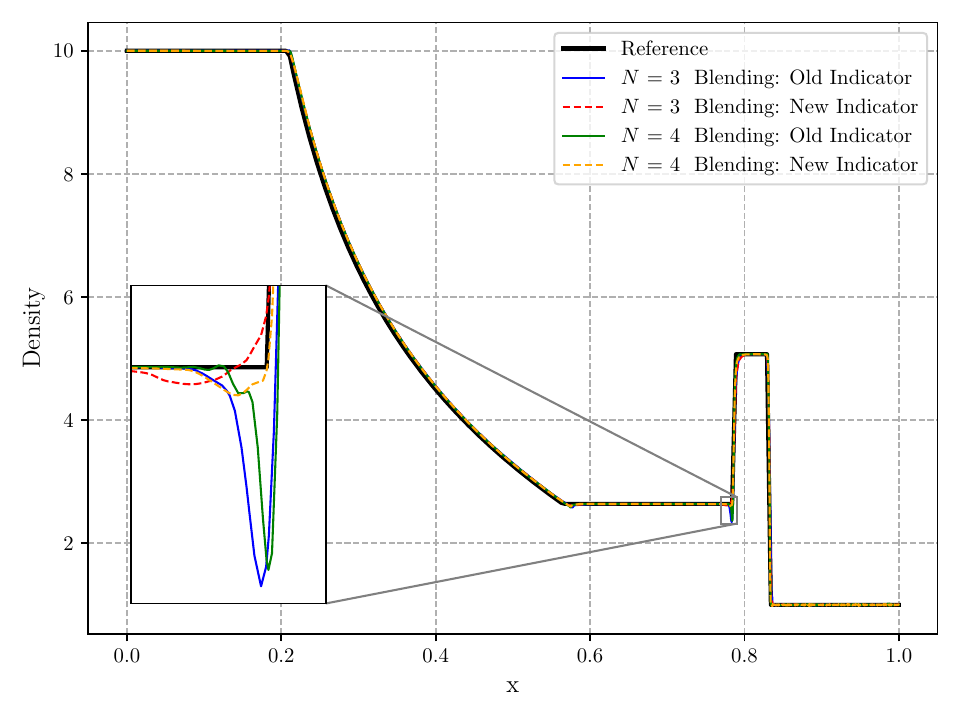}
    \caption{Density plot with $500$ cells.}
    \end{subfigure}
    \caption{Third Riemann problem in 1D: Plot of fluid density and velocity with \rev{using the scheme with polynomial degrees $N=3,4$} with the new indicator (our indicator model) and old indicator (indicator model of~\cite{hennemann2021provably}).}\label{RP3_ind}
\end{figure}
Here again, we can see that the solution approaches the reference solution by increasing the \rev{polynomial degree $N$} and number of cells we use, even though all the approximations can effectively capture the waves.
It is a good test to show the improvement of the results using our discontinuity indicator model. In Figure~\ref{RP3_ind}, we observe that the discontinuity indicator model used in~\cite{hennemann2021provably, babbar2024admissibility} can not effectively detect the discontinuities and hence produces larger oscillations mainly near the contact discontinuity but using our discontinuity indicator model overcomes this issue to a large extent without compromising much with the accuracy of the scheme. From Figure~\ref{RP3_ind}, we can also observe that our discontinuity indicator model works better even with a coarser mesh in controlling the oscillations.

\reva{
\subsubsection{Fourth Riemann problem}\label{sec: fourth_riemann_problem}
This Riemann problem is taken following~\cite{zhang2006ram} where two shock waves get formed in the solution, which moves in opposite directions. The initial state is given by,
\[
    (\rho, v_1, p)= \begin{cases}
    (1, 0.9, 1)\ &\text{if}\ x<0.5\\
    (1, 0, 10)\ &\text{if}\ x>0.5
\end{cases}
\]
with the computational domain as $[0, 1]$ having both outflow boundaries. We have run the simulation till $t=0.4$ using the scheme with degrees $N=3,4$ with $200$ and $500$ cells, and the comparison is shown in Figure~\ref{RP4}. The reference solution is found by a second-order finite volume method~\cite{van1984relation, berthon2006muscl} with $10000$ cells.

\begin{figure}[!htbp]
    \centering
    \begin{subfigure}{0.49\textwidth}
    \includegraphics[width=\linewidth]{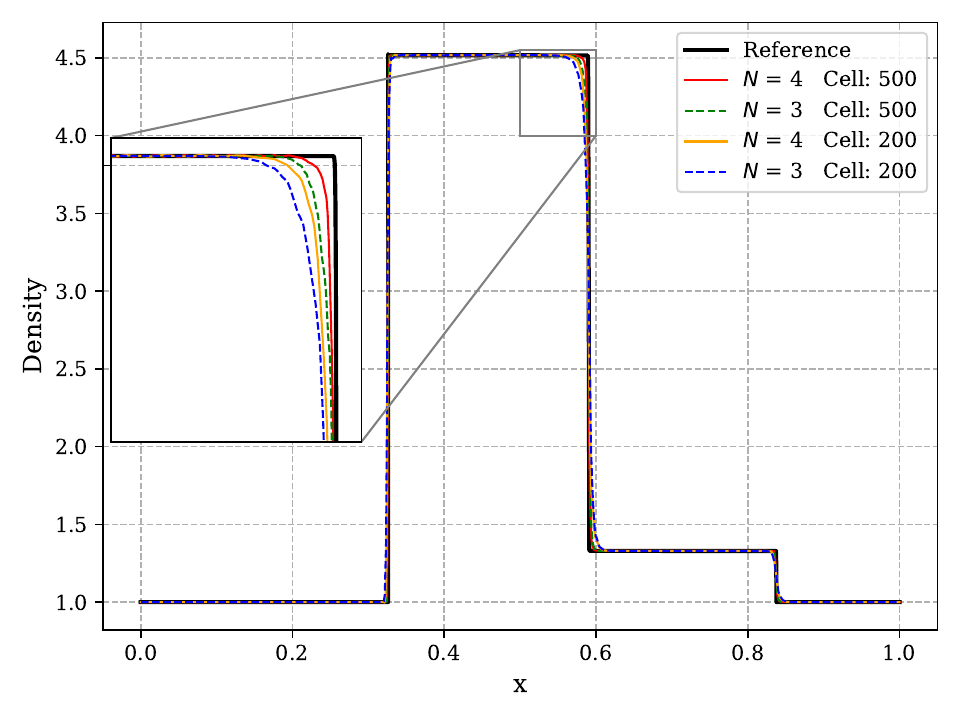}
    \caption{Density plot.}
    \end{subfigure}
    \begin{subfigure}{0.49\textwidth}
    \includegraphics[width=\linewidth]{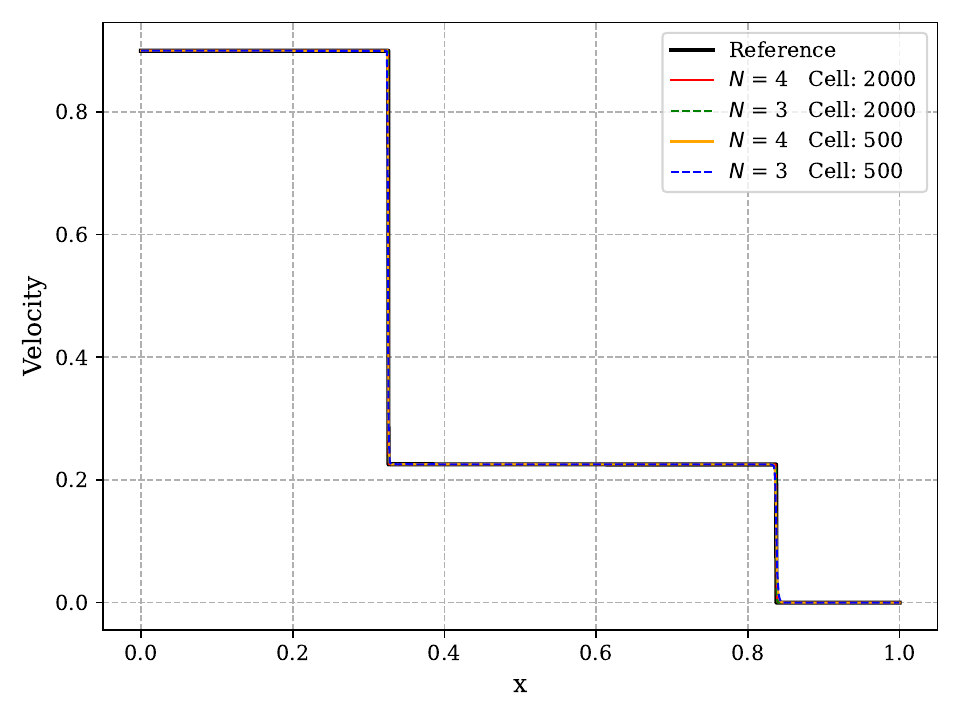}
    \caption{Velocity plot.}
    \end{subfigure}
    \begin{subfigure}{0.49\textwidth}
    \includegraphics[width=\linewidth]{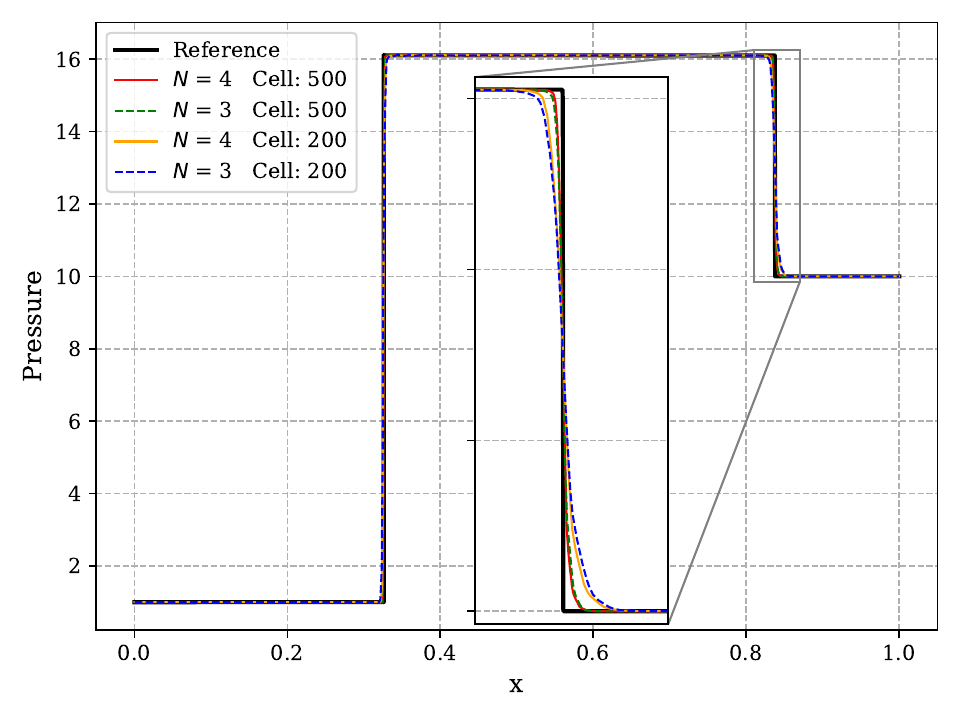}
    \caption{Pressure plot.}
    \end{subfigure}
    \caption{Fourth Riemann problem in 1D: Plot of fluid density, pressure, and velocity using the scheme with polynomial degrees $N=3,4$ with $200$ and $500$ cells.}\label{RP4}
\end{figure}

We observe from Figure~\ref{RP4} that the scheme can capture both the shock waves in the solution, and the accuracy of the result increases with increasing the degree $N$ and number of cells. 
}
\begin{figure}[htbp]
    \centering
    \begin{subfigure}{0.49\textwidth}
    \includegraphics[width=\linewidth]{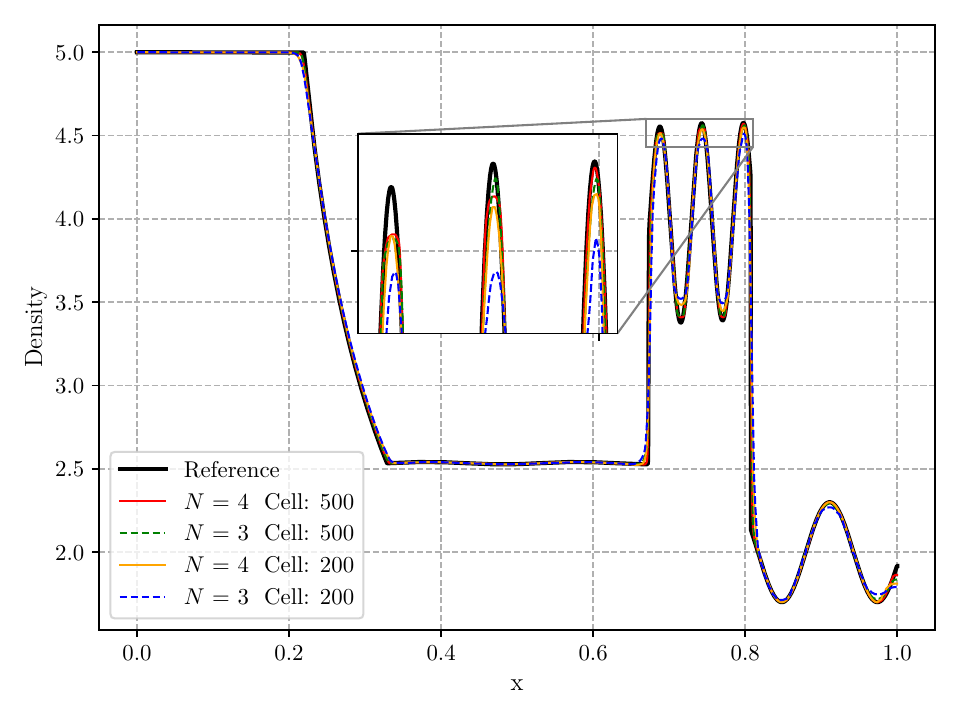}
    \caption{Density plot.}
    \end{subfigure}
    \begin{subfigure}{0.49\textwidth}
    \includegraphics[width=\linewidth]{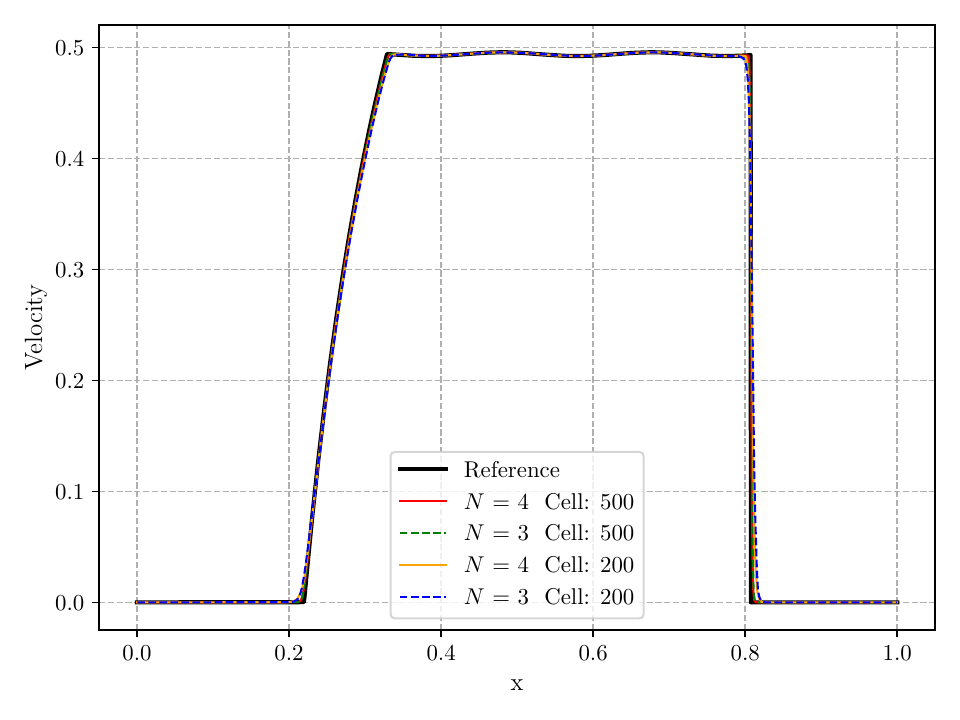}
    \caption{Velocity plot.}
    \end{subfigure}
    \begin{subfigure}{0.49\textwidth}
    \includegraphics[width=\linewidth]{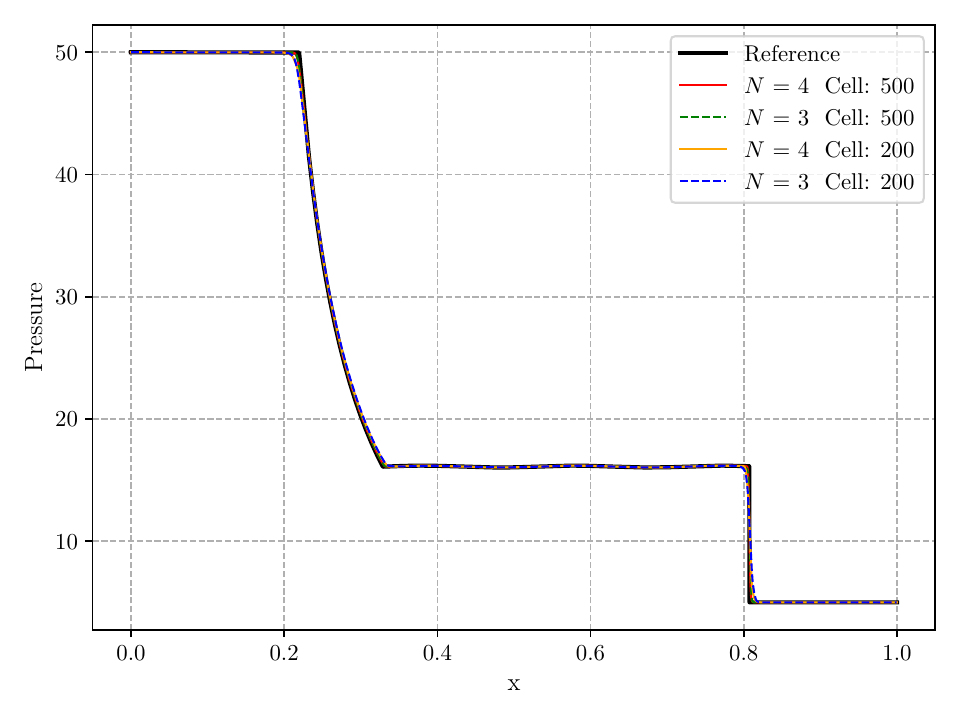}
    \caption{Pressure plot.}
    \end{subfigure}
    \caption{Density perturbation problem in 1D: Plot of fluid density, pressure, and velocity \rev{using the scheme with polynomial degrees $N=3,4$} with $200$ and $500$ cells.}\label{DP}
\end{figure}
\begin{figure}[htbp]
    \centering
    \begin{subfigure}{0.49\textwidth}
    \includegraphics[width=\linewidth]{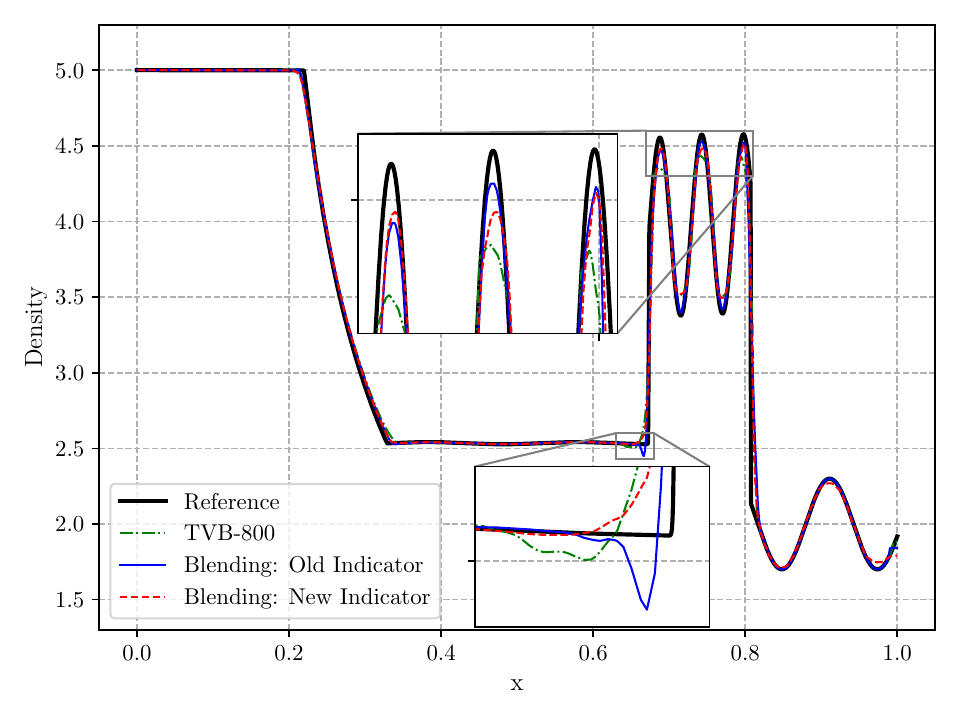}
    \caption{Density plot \rev{using the scheme with degree $N=3$}.}
    \end{subfigure}
    \begin{subfigure}{0.49\textwidth}
    \includegraphics[width=\linewidth]{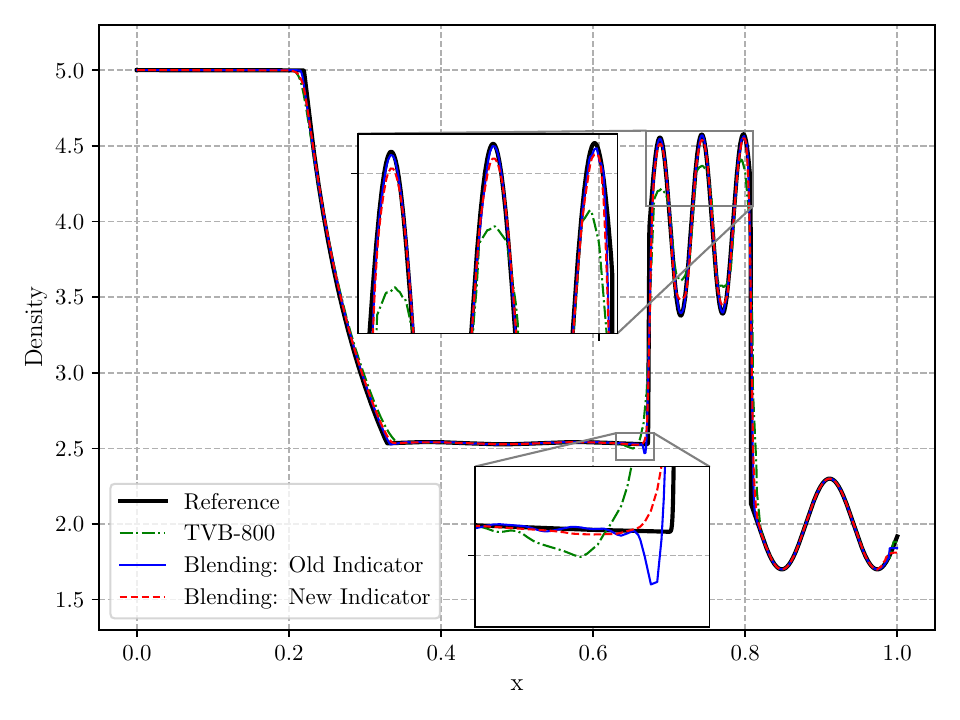}
    \caption{Density plot \rev{using the scheme with degree $N=4$}.}
    \end{subfigure}
    \caption{Density perturbation problem in 1D: Plot of fluid density using TVB limiter and blending limiter with new indicator (our indicator model) and old indicator (indicator model of~\cite{hennemann2021provably}) with 200 cells.}\label{DP_ind_limiter}
\end{figure}
\subsubsection{Density perturbation problem} \label{fourthRP_section}
We have taken this density perturbation test case from~\cite{del2002efficient}, which has both shock waves and smooth waves very close to each other. So, we need to capture the smooth waves properly when controlling the oscillations coming from the discontinuity, making it a very interesting test to check the efficiency of the scheme. The initial conditions are given by,
\[
(\rho, v_1, p)= \begin{cases}
    (5, 0, 50)\ &\text{if}\ x<0.5\\
    (2+0.3\sin(50x), 0, 5)\ &\text{if}\ x>0.5
\end{cases}
\]
and we have run the simulation till time $t=0.35$ with outflow boundaries at $x=0,1$. The comparison of the results is shown in Figure~\ref{DP} with the solution obtained by a second-order finite volume scheme~\cite{van1984relation, berthon2006muscl} using $100000$ cells as the reference solution.

We observe from the inset plot zoomed near the smooth extrema, that the solution approaches the reference solution by increasing the \rev{degree $N$} or the number of cells.

The presence of both shocks and smooth extrema makes this a good problem to show the benefit of the blending limiter over the TVB limiter, which does not capture sub-cell information as discussed in Section~\ref{sec: oscillation_control}. In Figure~\ref{DP_ind_limiter}, we compare the results obtained using the TVB limiter (applied on the characteristic variables) and blending limiter, showing both the new discontinuity indicator model and the indicator model from~\cite{hennemann2021provably}.

We can observe that the TVB limiter starts producing oscillations when we increase the parameter $M$ to $800$ (for details, refer to Section~7 of~\cite{BABBAR2022111423}) and still has inferior performance in attaining the extrema compared to the blending limiter, as expected from the discussion in Section~\ref{sec: oscillation_control}. The blending limiter with old indicator model~\cite{hennemann2021provably} performs better in attaining the extrema but produces oscillations near the contact discontinuity. Our indicator model performs better in controlling those oscillations, and it also attains the smooth extrema reasonably well.

\begin{figure}[htbp]
    \centering
    \begin{subfigure}{0.49\textwidth}
    \includegraphics[width=\linewidth]{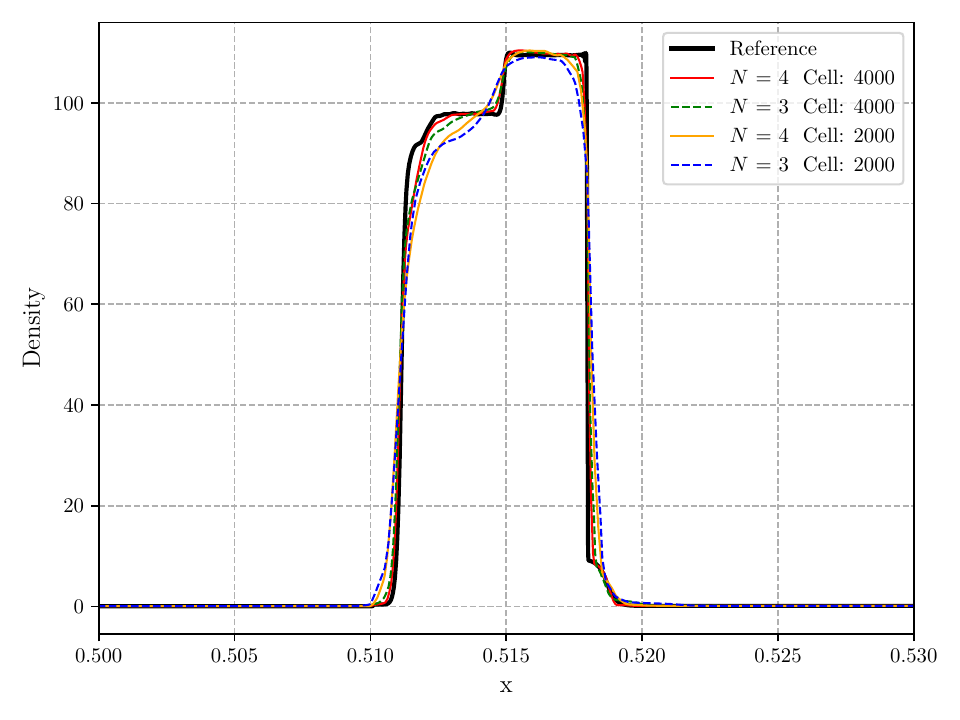}
    \caption{Density plot by zooming in $[0.50, 0.53]$.}
    \end{subfigure}
    \begin{subfigure}{0.49\textwidth}
    \includegraphics[width=\linewidth]{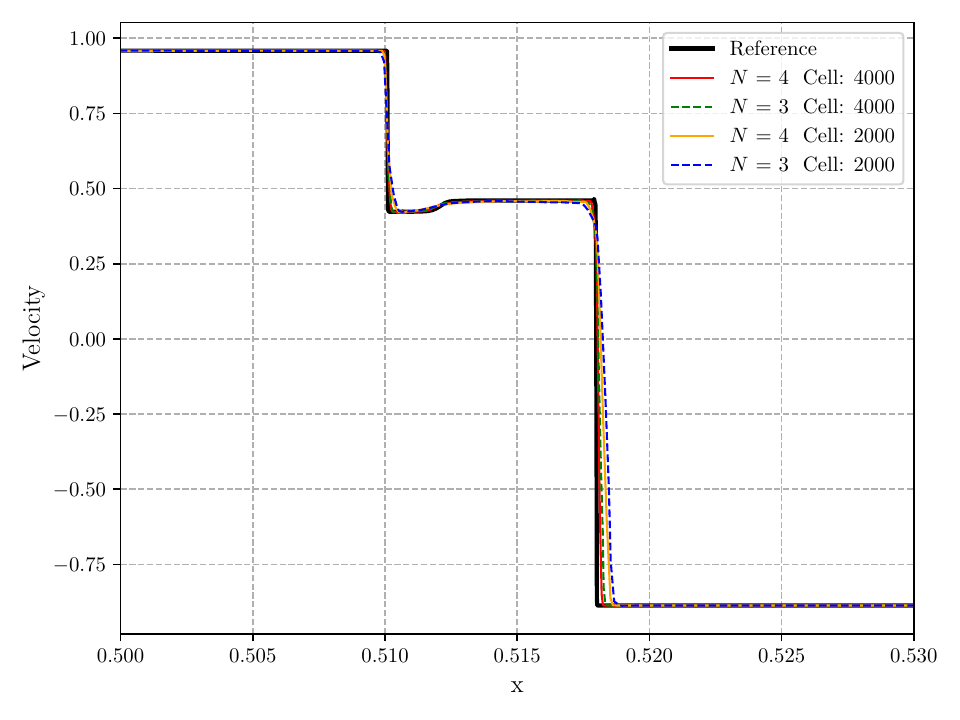}
    \caption{Velocity plot by zooming in $[0.50, 0.53]$.}
    \end{subfigure}
    \begin{subfigure}{0.49\textwidth}
    \includegraphics[width=\linewidth]{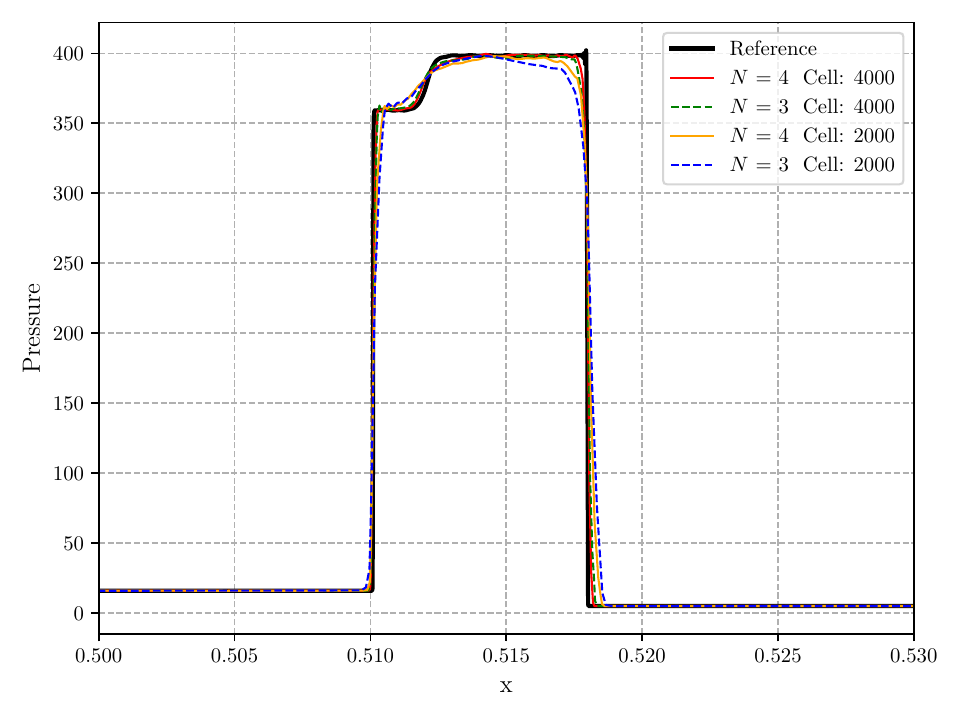}
    \caption{Pressure plot by zooming in $[0.50, 0.53]$.}
    \end{subfigure}
    \caption{Blast wave problem: Plot of fluid density, pressure, and velocity \rev{using the scheme with polynomial degrees $N=3,4$} with $2000$ and $4000$ cells.}\label{blast}
\end{figure}
\subsubsection{Blast wave problem}
This one-dimensional test case is taken from~\cite{marti1996extension}, where two strong shocks interact, and the scheme needs to capture a very narrow structure, proving its efficiency. The initial condition is given by,
\[
(\rho, v_1, p) = \begin{cases}
        (1, 0, 1000) & \text{if}\ x<0.1\\
        (1, 0, 0.01) & \text{if}\ 0.1<x<0.9\\
        (1, 0, 100)  & \text{if}\ x>0.9.
    \end{cases}
\]
We take outflow boundaries at $x = 0, 1$ and $\gamma = 1.43$ for this test. Here, the two blast waves will propagate with a speed nearing the speed of light in opposite directions, and finally, its interaction produces very narrow structures at time $t=0.43$. We have taken $2000$ and $4000$ cells in this test to capture these extremely small structures with $l_s = 0.75$. The results are shown in Figure~\ref{blast} by zooming in the interval $[0.50, 0.53]$. Here we have taken the solution obtained by a second-order finite volume scheme~\cite{van1984relation, berthon2006muscl} using $100000$ cells as the reference solution.

We observe that the scheme can capture small-scale structures more effectively with $4000$ cells than the $2000$ cells, as expected. Moreover, the indicator model effectively detects the discontinuities, producing more accurate and less oscillatory results compared to the entropy stable schemes~\cite{biswas2022entropy,bhoriya2020entropy}.

\subsection{2-D simulations}
We now perform two-dimensional tests to check the efficiency and robustness of the scheme. Here the velocity components $v_1, v_2$ are separately interpolated (spatially), making the constraint $|\mb{v}|<1$ more vulnerable. Multidimensional cases are also more important in the sense that for some one-dimensional Riemann problems, we can find exact solvers, which is not available in the case of higher dimensions.
\subsubsection{Isentropic vortex problem}
We test the accuracy of the scheme in the two-dimensional case with an isentropic vortex problem~\cite{ling2019physical, duan2019high} where a vortex moves diagonally with a constant velocity and returns to the initial position after a finite time. The initial density and pressure are given by,
\[
\rho(x,y) = \left(1 - a\exp(1-r^2)\right)^{\frac{1}{\gamma -1}},\qquad p(x,y) = \rho(x,y)^\gamma,
\]
where
\begin{align*}
    &a = \frac{(\gamma - 1)}{\gamma}\frac{\epsilon^2}{8\pi^2}, \qquad r = \sqrt{x_0^2 + y_0^2},\\
    &x_0 = x + \frac{\phi -1}{2}(x+y), \qquad y_0 = y + \frac{\phi -1}{2}(x+y), \qquad \phi = \frac{1}{\sqrt{1-w^2}}.
\end{align*}
The initial velocities in $x$ and $y$-directions are given by,
\begin{align*}
    &v_1 = \frac{1}{1-\frac{w(v_1'+v_2')}{\sqrt{2}}}\left(\frac{v_1'}{\phi} - \frac{w}{\sqrt{2}}+\frac{\phi w^2}{2(\phi +1)}(v_1'+v_2') \right),\\
    &v_2 = \frac{1}{1-\frac{w(v_1'+v_2')}{\sqrt{2}}}\left(\frac{v_2'}{\phi} - \frac{w}{\sqrt{2}}+\frac{\phi w^2}{2(\phi +1)}(v_1'+v_2') \right),
\end{align*}
where
\begin{align*}
    (v_1',v_2')=(-y_0, x_0) \sqrt{\frac{b}{1+br^2}},\qquad b=\frac{2\gamma a \exp(1-r^2)}{2\gamma - 1 -\gamma a \exp(1-r^2)}.
\end{align*}
Here, the vortex strength is taken as $\epsilon = 5$ and $w=0.5\sqrt{2}$.

We run the simulation in the domain $[-20, 20] \times [-20, 20]$ with periodic boundary conditions in both directions till time $t=80$ when the vortex returns to the initial position. The errors in relativistic density versus grid resolutions are presented in Tables~\ref{table_1_2d},~\ref{table_2_2d} for degrees $N=3,4$ respectively. We see that the scheme attains close to the optimal order of accuracy for both degrees \reva{as we decrease the mesh size}.

\begin{table}[htbp]
    \centering
    \begin{tabular}{|l|l|l|l|l|l|l|}
    \hline
    \multirow{2}{*}{No. of cells} & \multicolumn{6}{|c|}{\rev{Degree $N=3$}} \\
    \cline{2-7}
    & $L^1$ error & Order & $L^2$ error & Order  & $L^{\infty}$ error & Order\\
    \hline
         20$\times$20 & 2.39771e-3 & - & 9.12451e-3 & - & 3.77608e-1 & - \\
         40$\times$40 & 7.35697e-4 & 1.70447 & 1.91447e-3 & 2.25280  & 8.38738e-2 & 2.17060 \\
         80$\times$80 & 6.22063e-5 & 3.56398 & 2.04361e-4 & 3.22775 & 1.10895e-2 & 2.91903 \\
         160$\times$160 & 2.02681e-6 & 4.93978 & 7.00402e-6 & 4.86679 & 6.26013e-4 & 4.14685 \\
         \reva{320$\times$320} &  \reva{5.00244e-8} & \reva{5.34043} & \reva{2.00780e-7} & \reva{5.12449} & \reva{1.26247e-5} & \reva{5.63187} \\
         \reva{640$\times$640} & \reva{1.64023e-9} & \reva{4.93066} & \reva{1.00213e-8} & \reva{4.32448} & \reva{3.94292e-7}	& \reva{5.00085} \\
         \hline
    \end{tabular}
    \vspace{3pt}
    \caption{Numerical results using the \rev{scheme with degree $N=3$}.}
    \label{table_1_2d}
\end{table}

\begin{table}[htbp]
    \centering
    \begin{tabular}{|l|l|l|l|l|l|l|}
    \hline
    \multirow{2}{*}{No. of cells} & \multicolumn{6}{|c|}{\rev{Degree $N=4$}} \\
    \cline{2-7}
    & $L^1$ error & Order & $L^2$ error & Order  & $L^{\infty}$ error & Order\\
    \hline
         $20\times 20$ & 1.43133e-3 & - & 3.34925e-3 & - & 6.83311e-2  & - \\
         $40\times 40$ & 1.36445e-4 & 3.39097 & 6.48346e-4 & 2.36900 & 3.94521e-2 & 0.79244 \\
         $80\times 80$ & 2.89300e-6 & 5.55960 & 1.09788e-5 & 5.88397 & 1.19125e-3 & 5.04956 \\
         $160\times 160$ & 9.88368e-8 & 4.87137 & 2.38657e-7 & 5.52364 & 1.52526e-5 & 6.28728 \\
         \hline
    \end{tabular}
    \vspace{3pt}
    \caption{Numerical results using the \rev{scheme with degree $N=4$}.}
    \label{table_2_2d}
\end{table}

We have also plotted the solution with respect to the diagonal from bottom-left to upper-right corner by projecting it to the $x$-axis in Figure~\ref{fig: IV} using $80\times 80$ cells to see the comparison with the exact solution.
\begin{figure}[!htbp]
    \centering
    \begin{subfigure}{0.49\textwidth}
    \includegraphics[width=\linewidth]{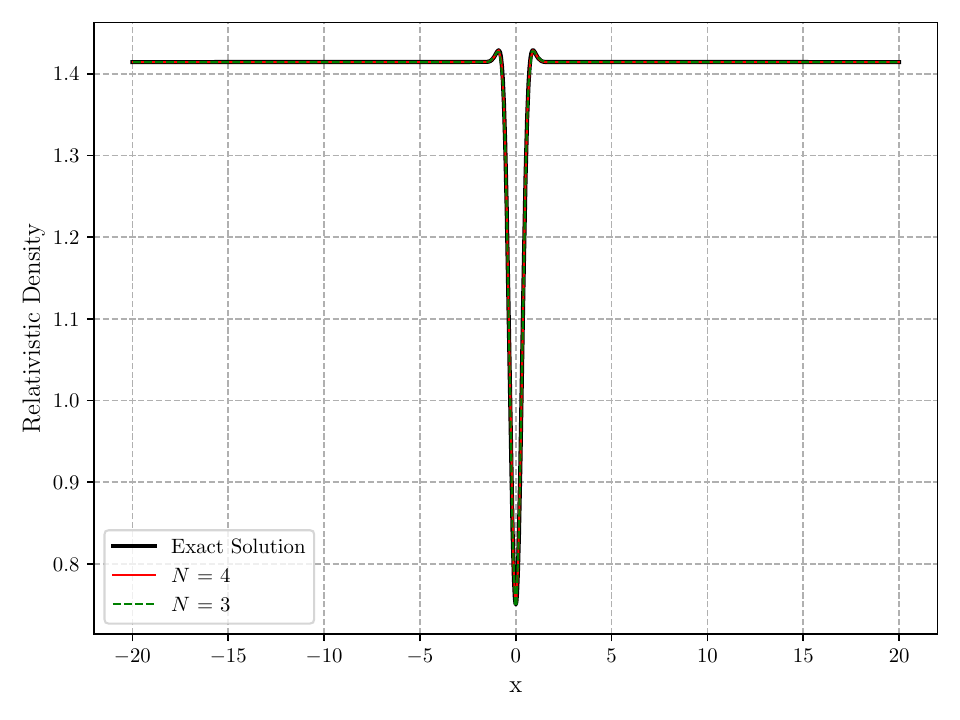}
    \caption{Plot of relativistic density.}
    \end{subfigure}
    \begin{subfigure}{0.49\textwidth}
    \includegraphics[width=\linewidth]{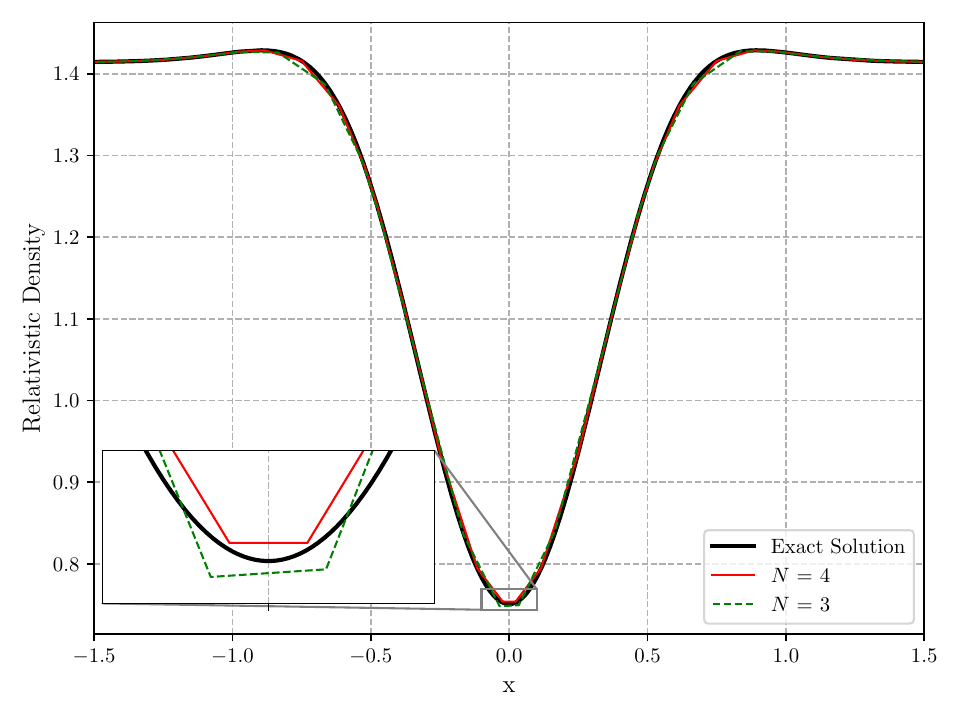}
    \caption{Plot of relativistic density by zooming in $[-1.5, 1.5]$.}
    \end{subfigure}
    \caption{Isentropic vortex problem in 2D: Line plot of relativistic density with respect to a diagonal by projecting it to $x$-axis \rev{using the scheme with polynomial degrees $N=3,4$} with $80\times 80$ cells.}\label{fig: IV}
\end{figure}

\begin{figure}
    \centering
    \begin{subfigure}{0.45\textwidth}
         \includegraphics[width=\linewidth]{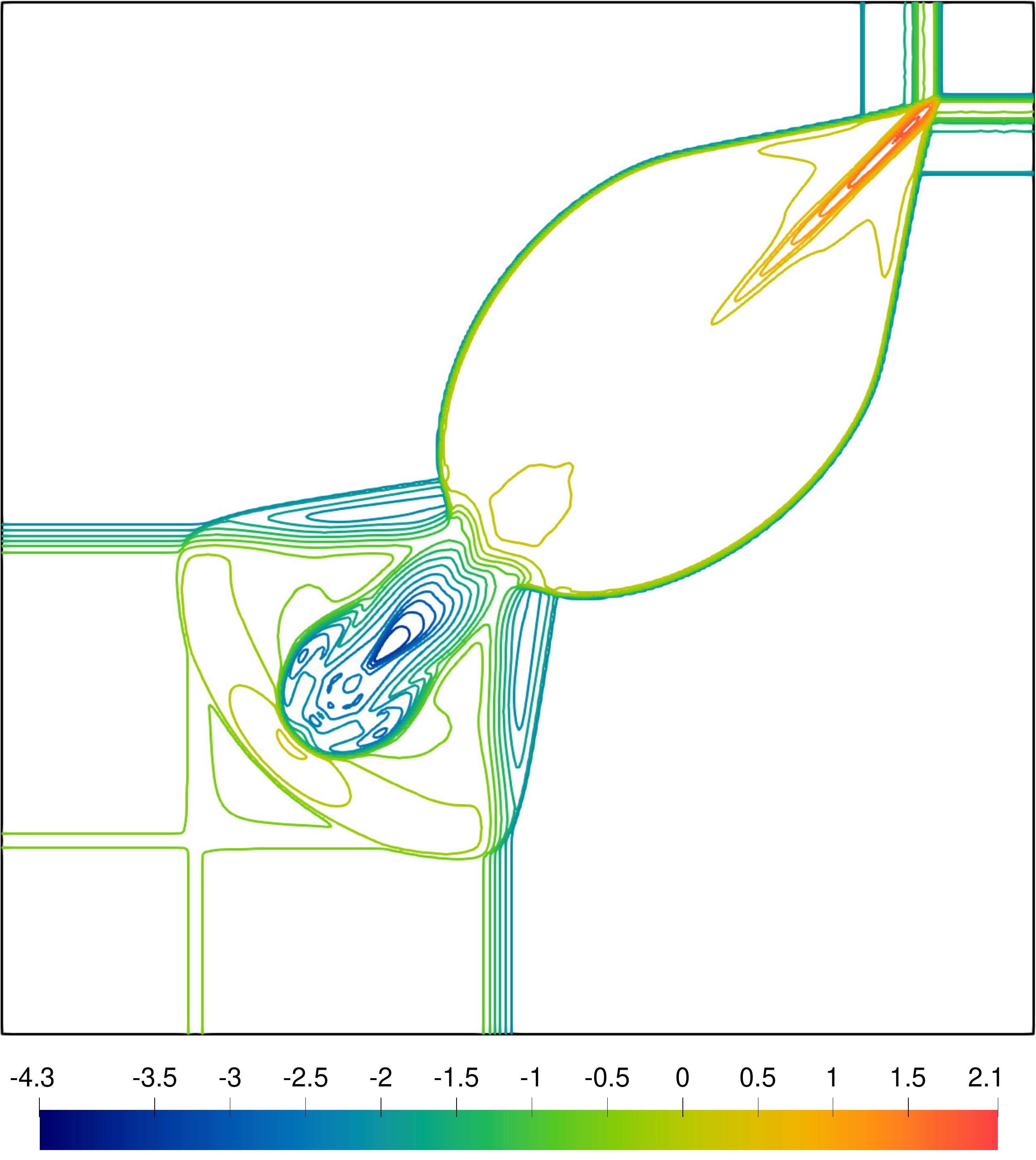}
         \caption{$\ln \rho$ with $25$ contour lines in $[-4.30, 2.09]$ using \rev{the scheme with degree $N=3$}.}
    \end{subfigure}
    \begin{subfigure}{0.45\textwidth}
         \includegraphics[width=\linewidth]{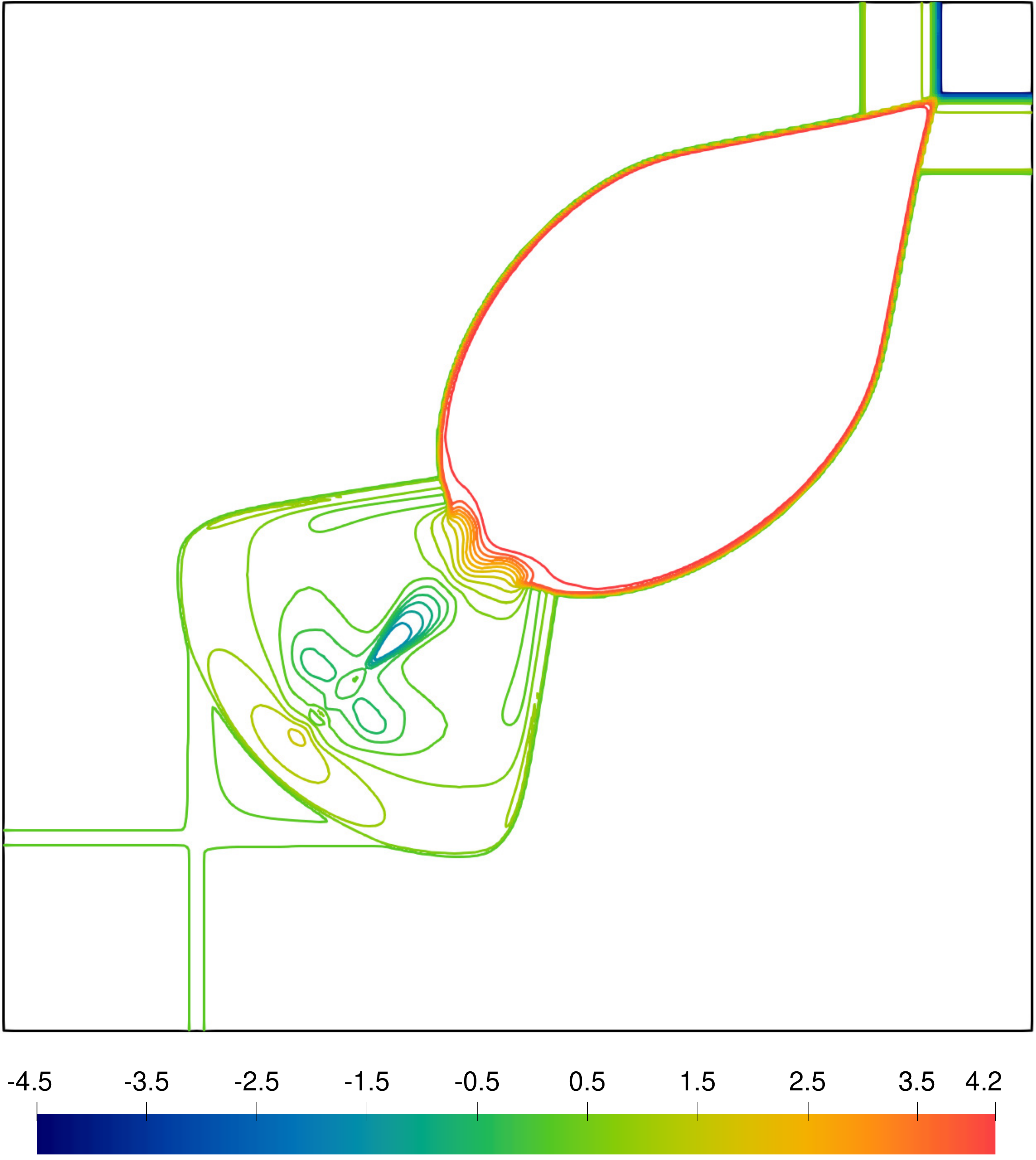}
         \caption{$\ln p$ with $25$ contour lines in $[-4.50, 3.10]$ using \rev{the scheme with degree $N=3$}.}
    \end{subfigure}
    \begin{subfigure}{0.45\textwidth}
         \includegraphics[width=\linewidth]{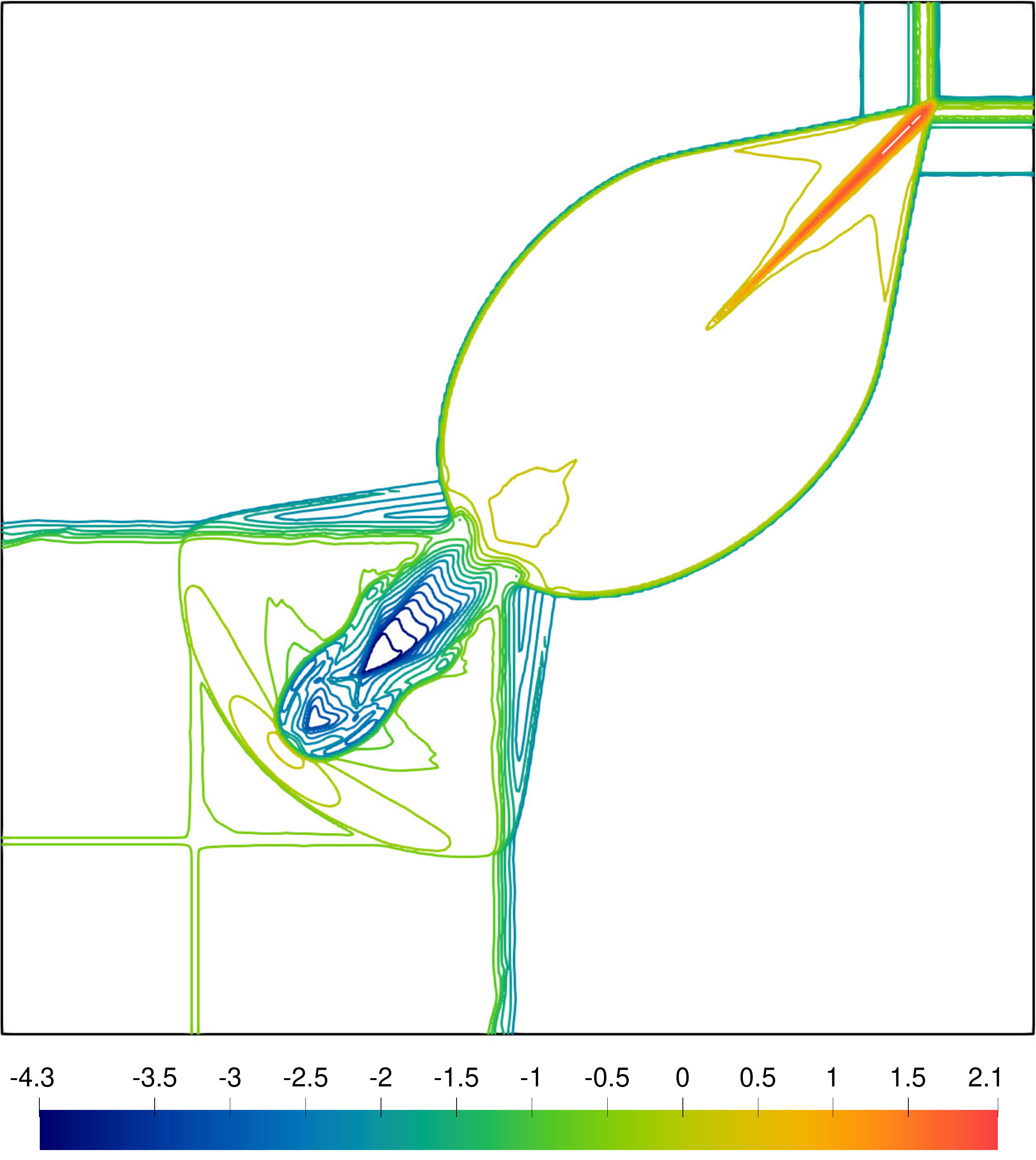}
         \caption{$\ln \rho$ with $25$ contour lines in $[-4.30, 2.09]$ using \rev{the scheme with degree $N=4$}.}
    \end{subfigure}
    \begin{subfigure}{0.45\textwidth}
         \includegraphics[width=\linewidth]{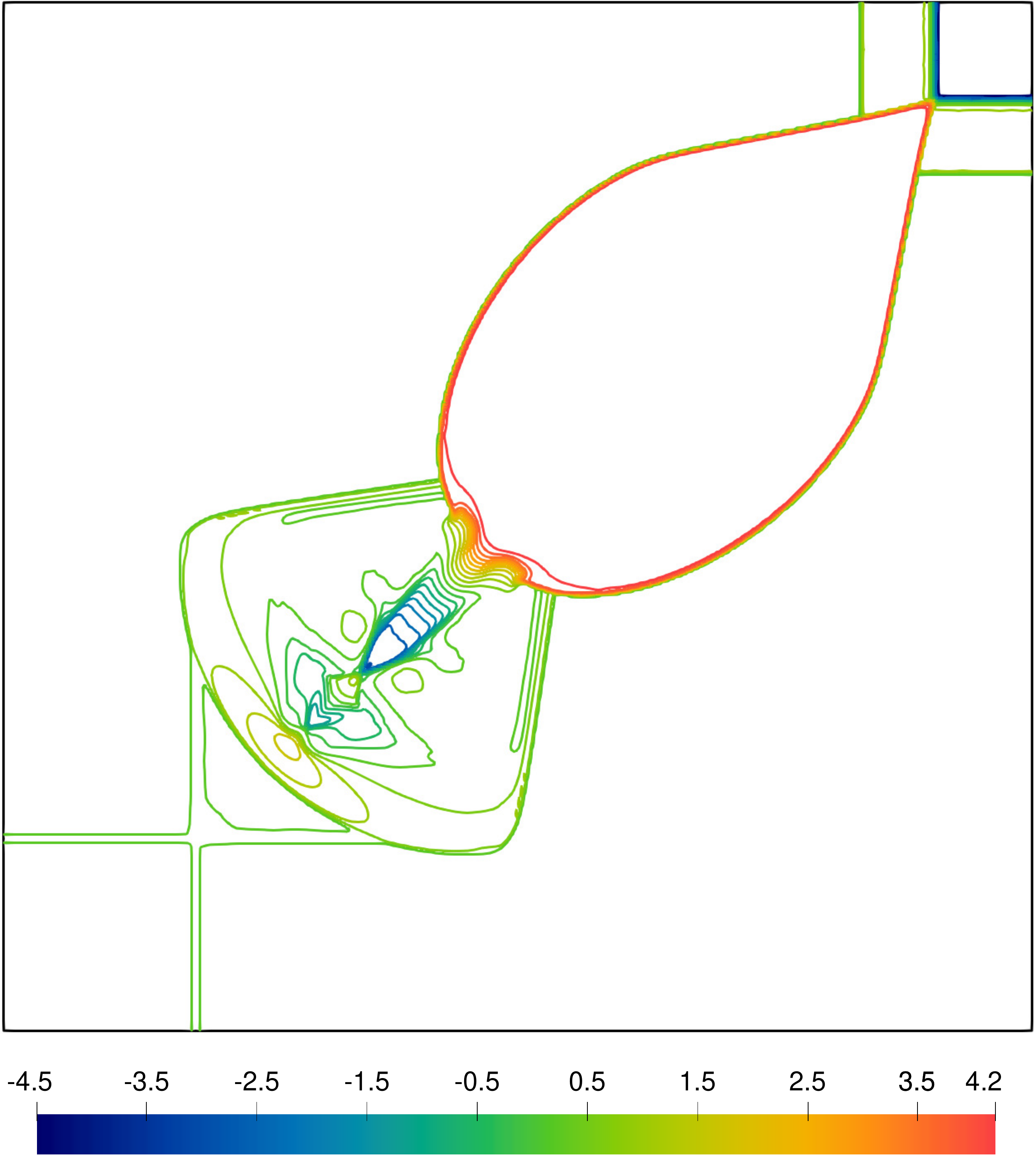}
         \caption{$\ln p$ with $25$ contour lines in $[-4.50, 3.10]$ using \rev{the scheme with degree $N=4$}.}
    \end{subfigure}
    \caption{First Riemann problem in 2D: Plot of log of density and log of pressure using $400\times 400$ cells.}
    \label{2DRP1}
\end{figure}
\subsubsection{First Riemann problem}
This problem is taken from~\cite{del2002efficient}, which has two one-dimensional shocks that are symmetric to the main diagonal and two contact discontinuities in its solution, making it suitable to check the efficiency of our scheme. It is also a good test to check the robustness of the scheme because of the presence of very high-velocity components in the second and fourth quadrants. Initially, we fill the domain by dividing it into four quadrants as follows,
\[
(\rho, v_1, v_2, p) = \begin{cases}
        (0.1, 0, 0, 0.01) & \text{if}\ x > 0.5,\ y > 0.5\\
        (0.1, 0.99, 0, 1) & \text{if}\ x < 0.5,\ y>0.5\\
        (0.5, 0, 0, 1) & \text{if}\ x < 0.5,\ y < 0.5\\
        (0.1, 0, 0.99, 1) & \text{if}\ x > 0.5,\ y<0.5.\\
    \end{cases}
\]
We present our result in Figure~\ref{2DRP1} running the simulation using $400\times 400$ cells till time $t=0.4$ \rev{using the scheme with degrees $N=3,4$}. We use outflow boundary conditions at all the boundaries. We observe that the \rev{the scheme with degree $N=4$} can capture all the waves with slightly higher accuracy in terms of sharp resolution compared to the \rev{scheme with degree $N=3$}.

This is also a good test to check our discontinuity indicator model in a two-dimensional case. From Figure~\ref{2DRP1} and Figure~\ref{2DRP1_gassner}, which uses the old indicator model, we can see that our discontinuity indicator model produces less oscillatory results compared to the indicator used in of~\cite{hennemann2021provably, babbar2024admissibility}.
\begin{figure}[htbp]
    \centering
    \begin{subfigure}{0.45\textwidth}
         \includegraphics[width=\linewidth]{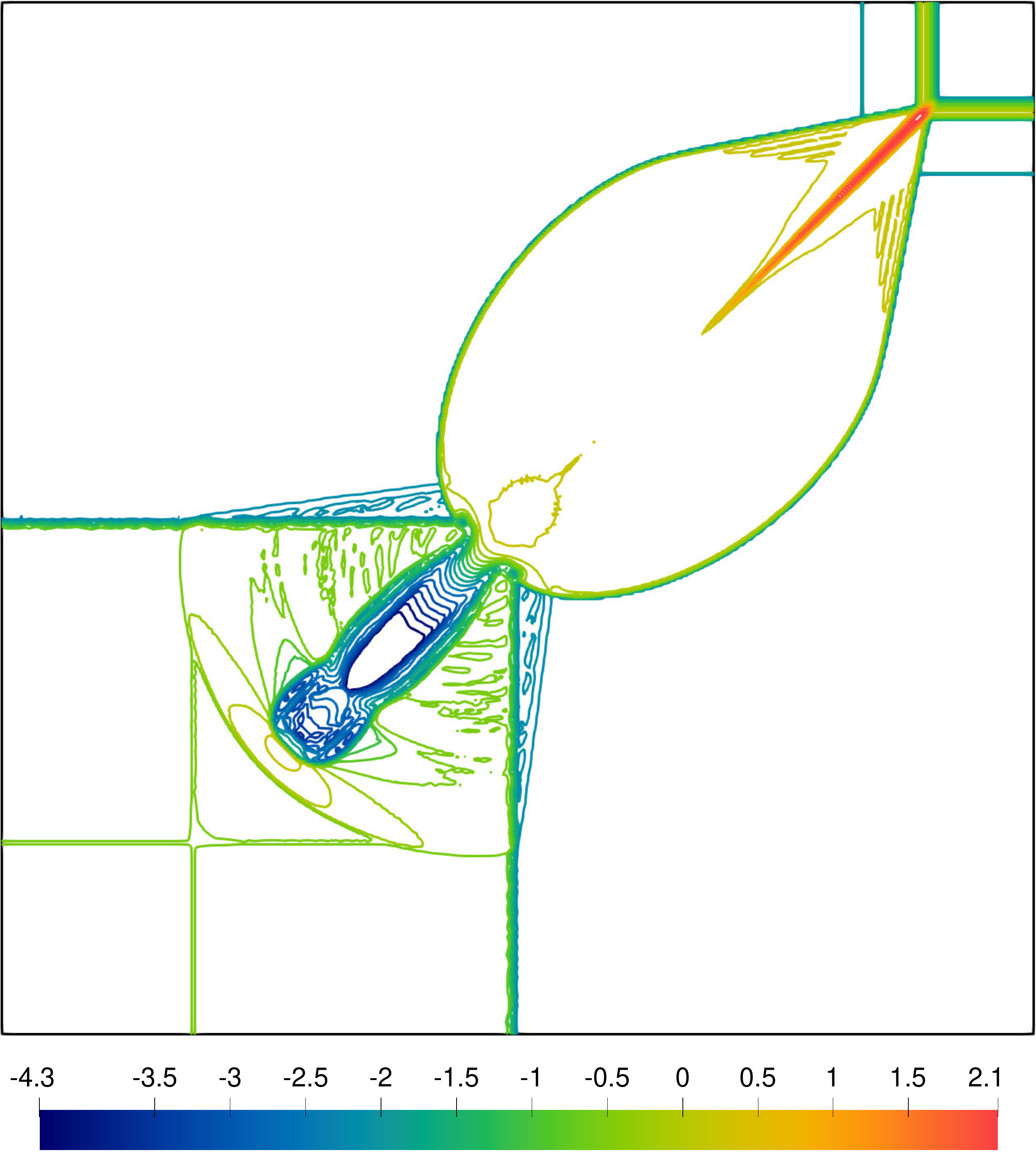}
         \caption{$\ln \rho$ with $25$ contour lines in $[-4.30, 2.09]$ using \rev{the scheme with degree $N=4$}.}
    \end{subfigure}
    \begin{subfigure}{0.45\textwidth}
         \includegraphics[width=\linewidth]{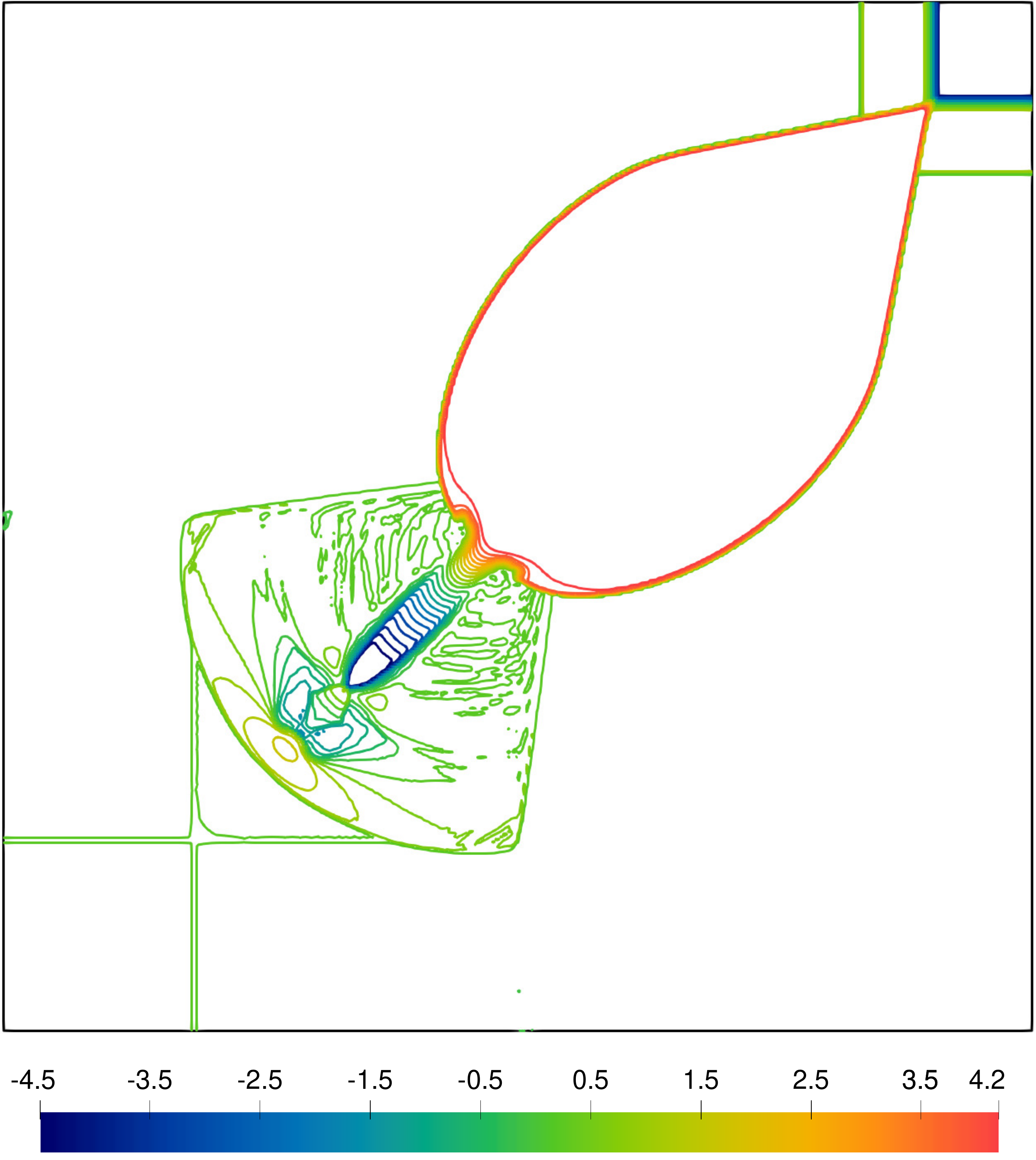}
         \caption{$\ln p$ with $25$ contour lines in $[-4.50, 3.10]$ using \rev{the scheme with degree $N=4$}.}
    \end{subfigure}
    \caption{First Riemann problem in 2D: Plot of log of density and log of pressure with $400\times 400$ cells using indicator model from~\cite{hennemann2021provably}.}
    \label{2DRP1_gassner}
\end{figure}

\begin{figure}
    \centering
    \begin{subfigure}{0.45\textwidth}
         \includegraphics[width=\linewidth]{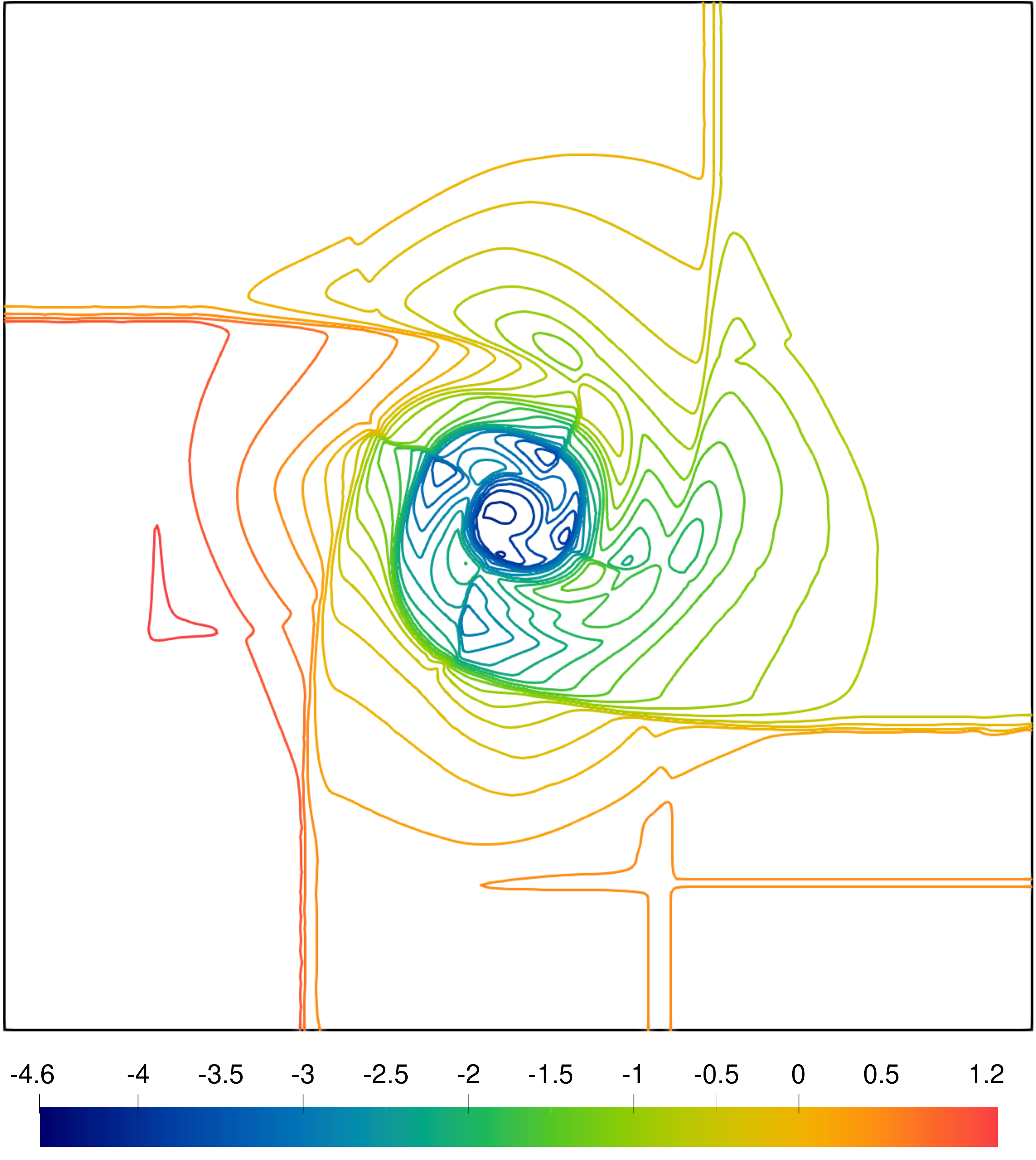}
         \caption{$\ln \rho$ with $25$ contour lines in $[-4.60, 1.20]$ using \rev{the scheme with degree $N=3$}.}
    \end{subfigure}
    \begin{subfigure}{0.45\textwidth}
         \includegraphics[width=\linewidth]{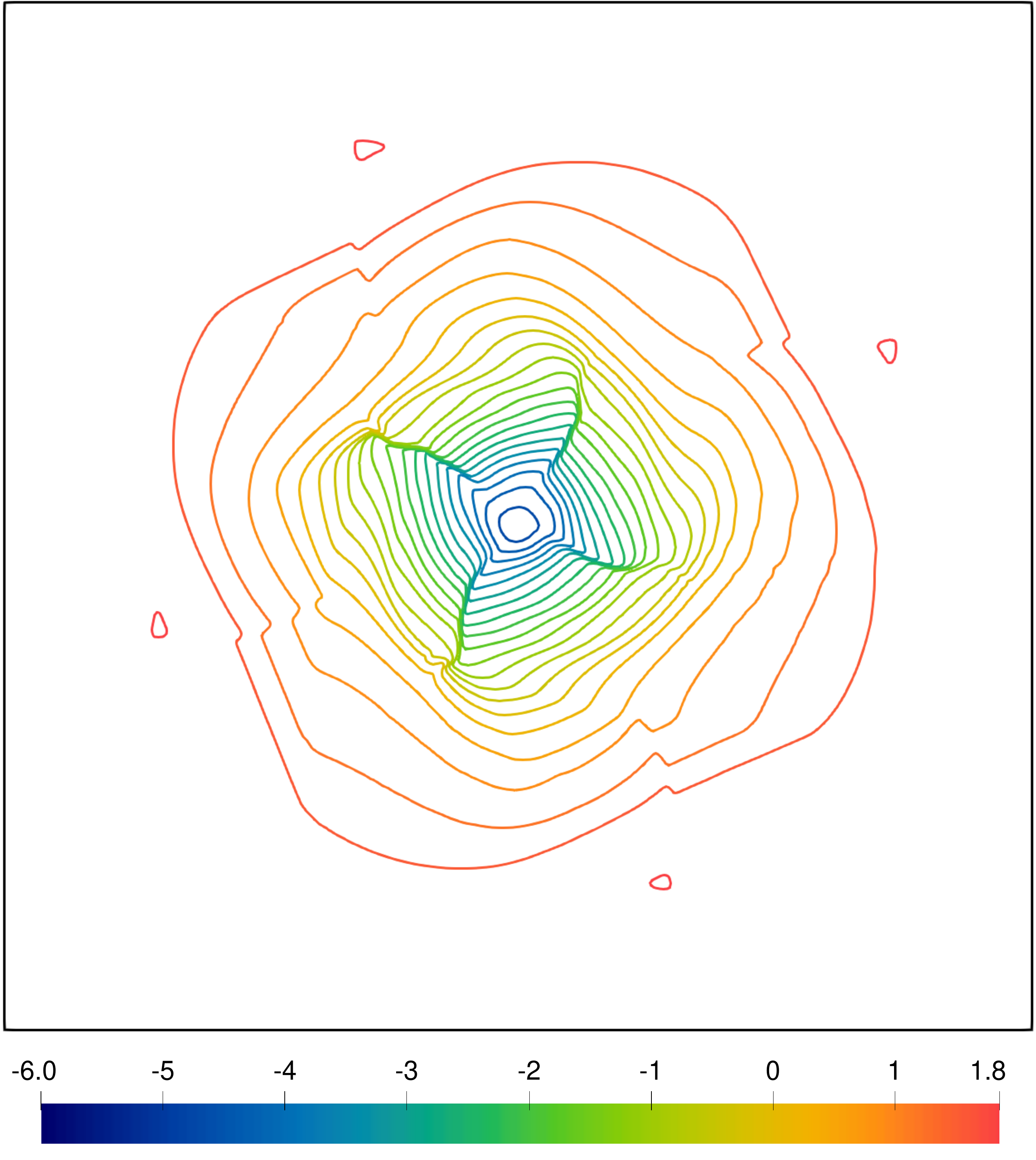}
         \caption{$\ln p$ with $25$ contour lines in $[-6.00, 1.85]$ using \rev{the scheme with degree $N=3$}.}
    \end{subfigure}
    \begin{subfigure}{0.45\textwidth}
         \includegraphics[width=\linewidth]{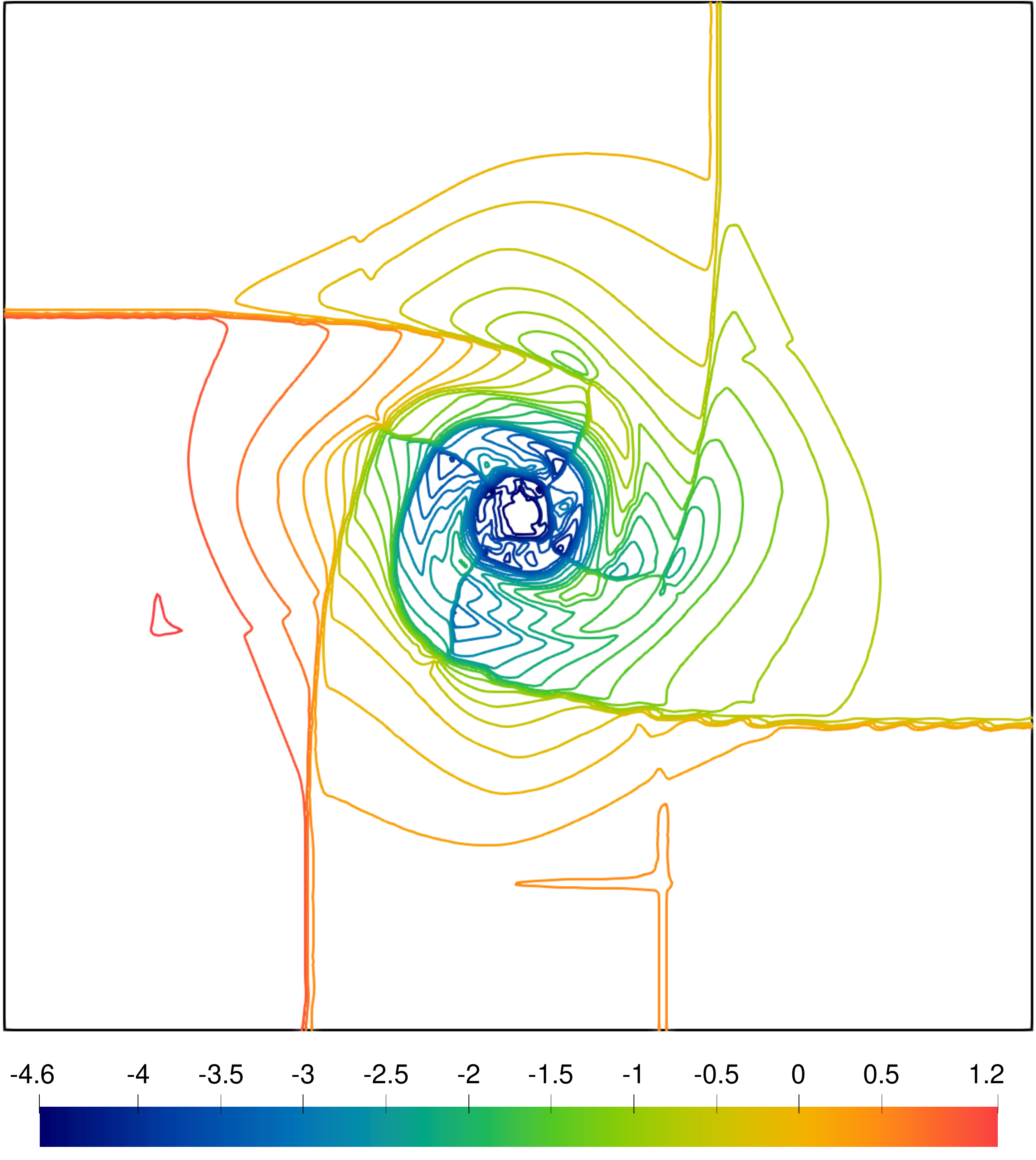}
         \caption{$\ln \rho$ with $25$ contour lines in $[-4.60, 1.20]$ using\rev{the scheme with degree $N=4$}.}
    \end{subfigure}
    \begin{subfigure}{0.45\textwidth}
         \includegraphics[width=\linewidth]{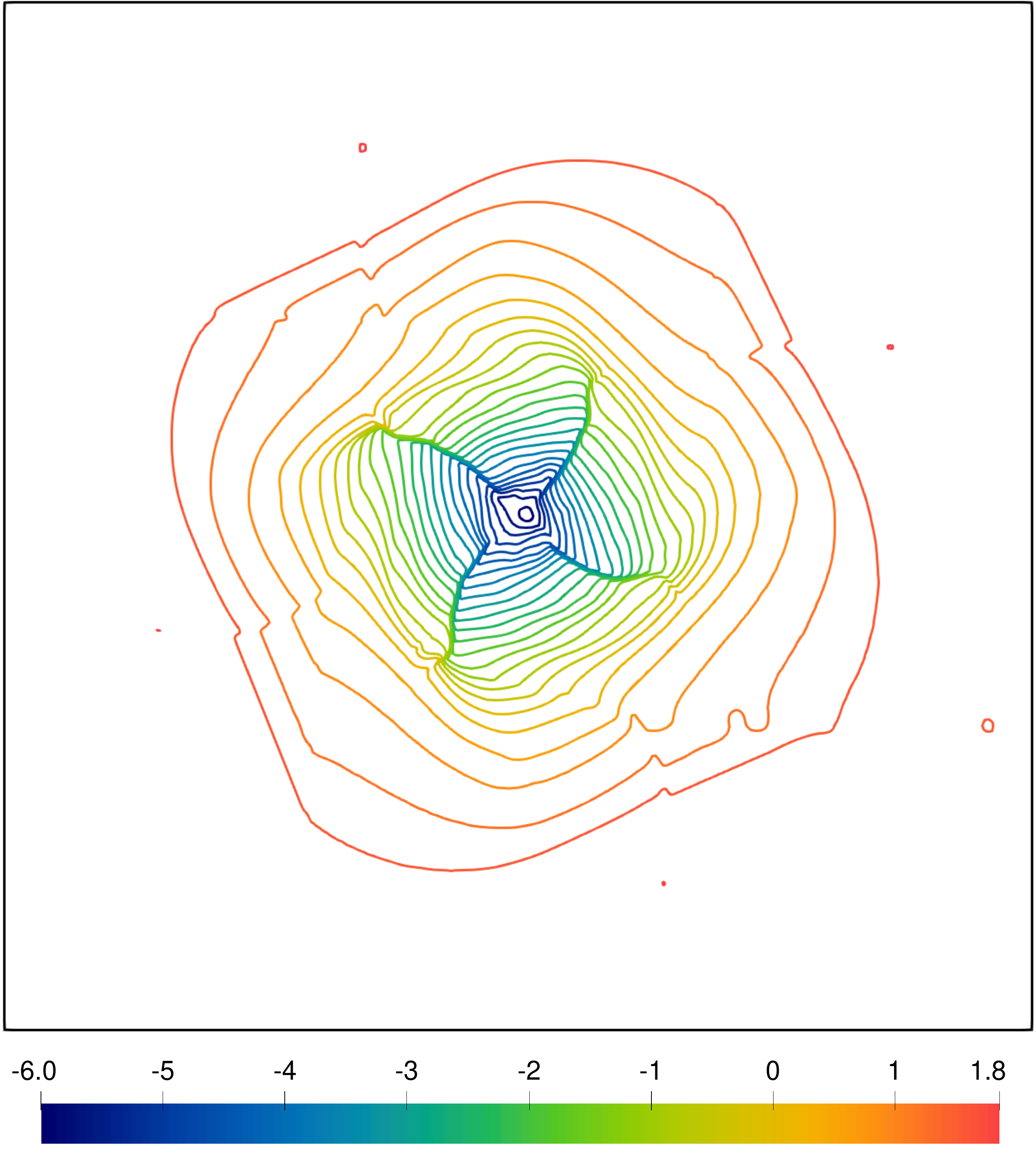}
         \caption{$\ln p$ with $25$ contour lines in $[-6.00, 1.85]$ using \rev{the scheme with degree $N=4$}.}
    \end{subfigure}
    \caption{Second Riemann problem in 2D: Plot of log of density and log of pressure using $400\times 400$ cells.}
    \label{2DRP2}
\end{figure}

\subsubsection{Second Riemann problem}
This problem is taken from~\cite{nunez2016xtroem}, where initially the four quadrants of the domain $[0,1]\times [0,1]$ are filled with a fluid having the following properties,
\[
(\rho, v_1, v_2, p) = \begin{cases}
        (0.5, 0.5, -0.5, 5) & \text{if}\ x > 0.5,\ y > 0.5\\
        (1, 0.5, 0.5, 5) & \text{if}\ x < 0.5,\ y>0.5\\
        (3, -0.5, 0.5, 5) & \text{if}\ x < 0.5,\ y < 0.5\\
        (1.5, -0.5, -0.5, 5) & \text{if}\ x > 0.5,\ y<0.5.\\
    \end{cases}
\]
We run the simulation till $t=0.4$ with $400\times 400$ cells with outflow boundary conditions at all the boundaries and show the result in Figure~\ref{2DRP2}. We observe that the schemes can capture the waves effectively; the \rev{scheme with degree $N=4$} captures the details more accurately than the \rev{scheme with degree $N=3$}, as expected.

\begin{figure}
    \centering
    \begin{subfigure}{0.45\textwidth}
         \includegraphics[width=\linewidth]{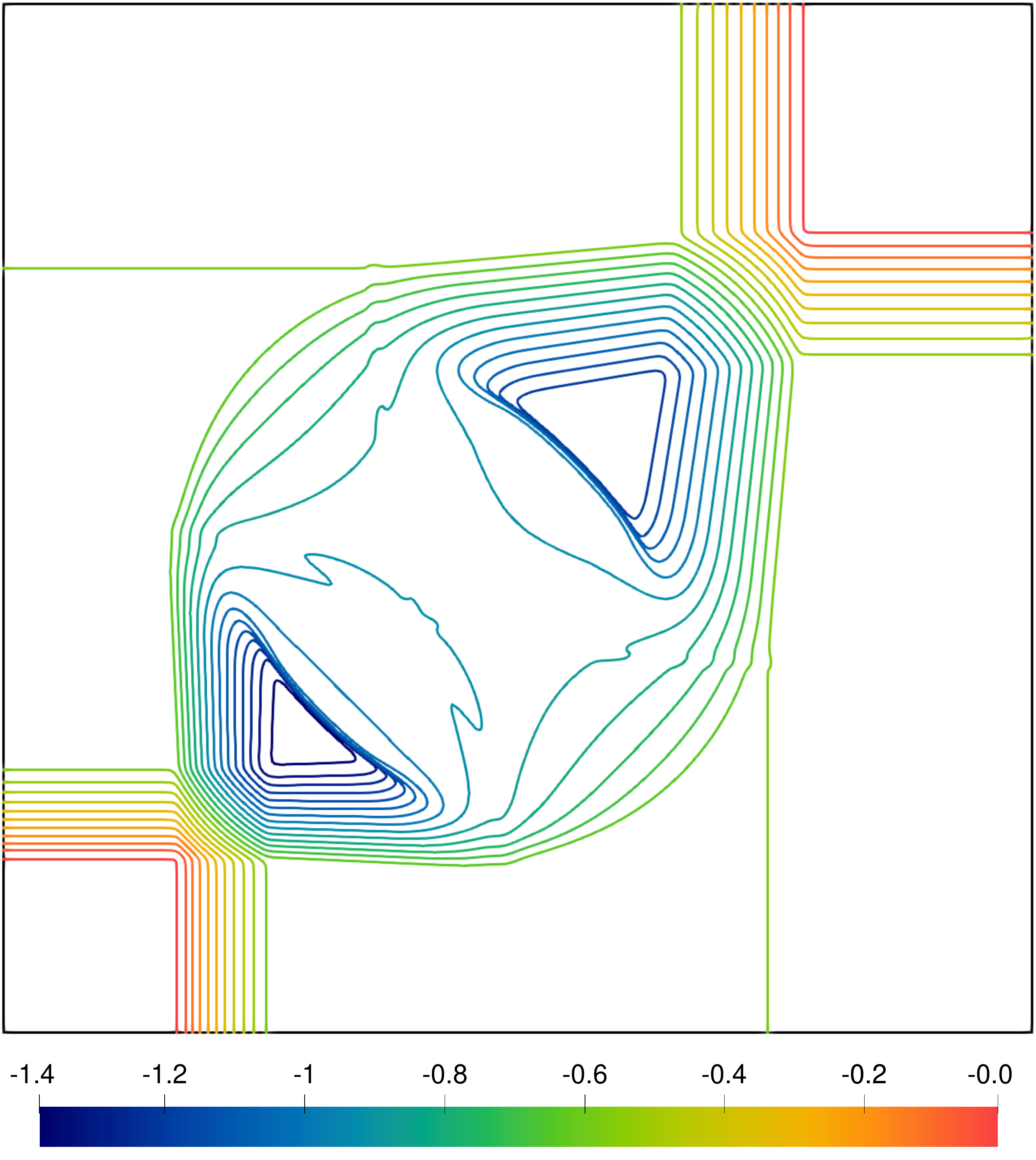}
         \caption{$\ln \rho$ with $25$ contour lines in $[-1.38, -0.01]$ using \rev{the scheme with degree $N=3$}.}
    \end{subfigure}
    \begin{subfigure}{0.45\textwidth}
         \includegraphics[width=\linewidth]{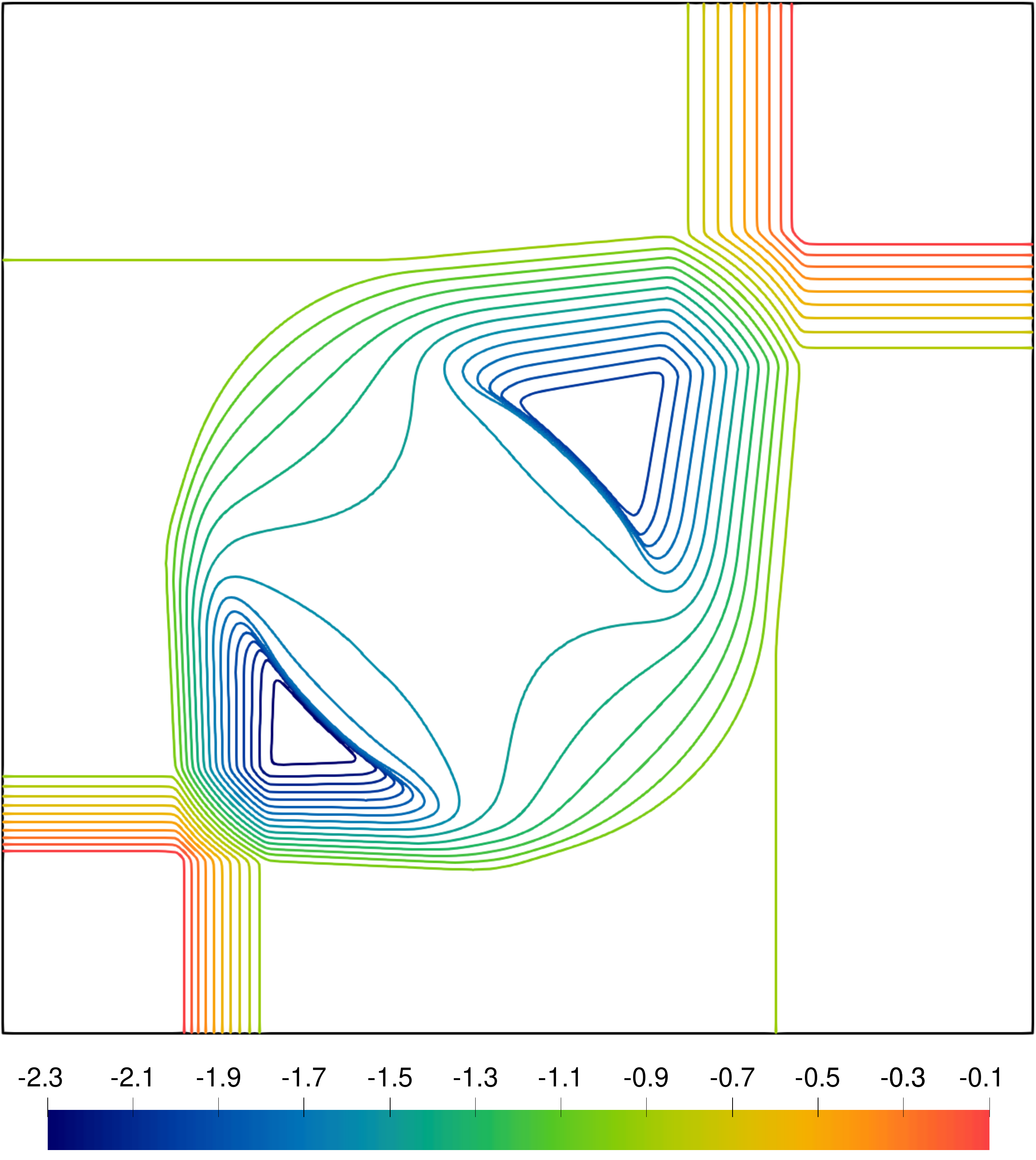}
         \caption{$\ln p$ with $25$ contour lines in $[-2.30, -0.01]$ using \rev{the scheme with degree $N=3$}.}
    \end{subfigure}
    \begin{subfigure}{0.45\textwidth}
         \includegraphics[width=\linewidth]{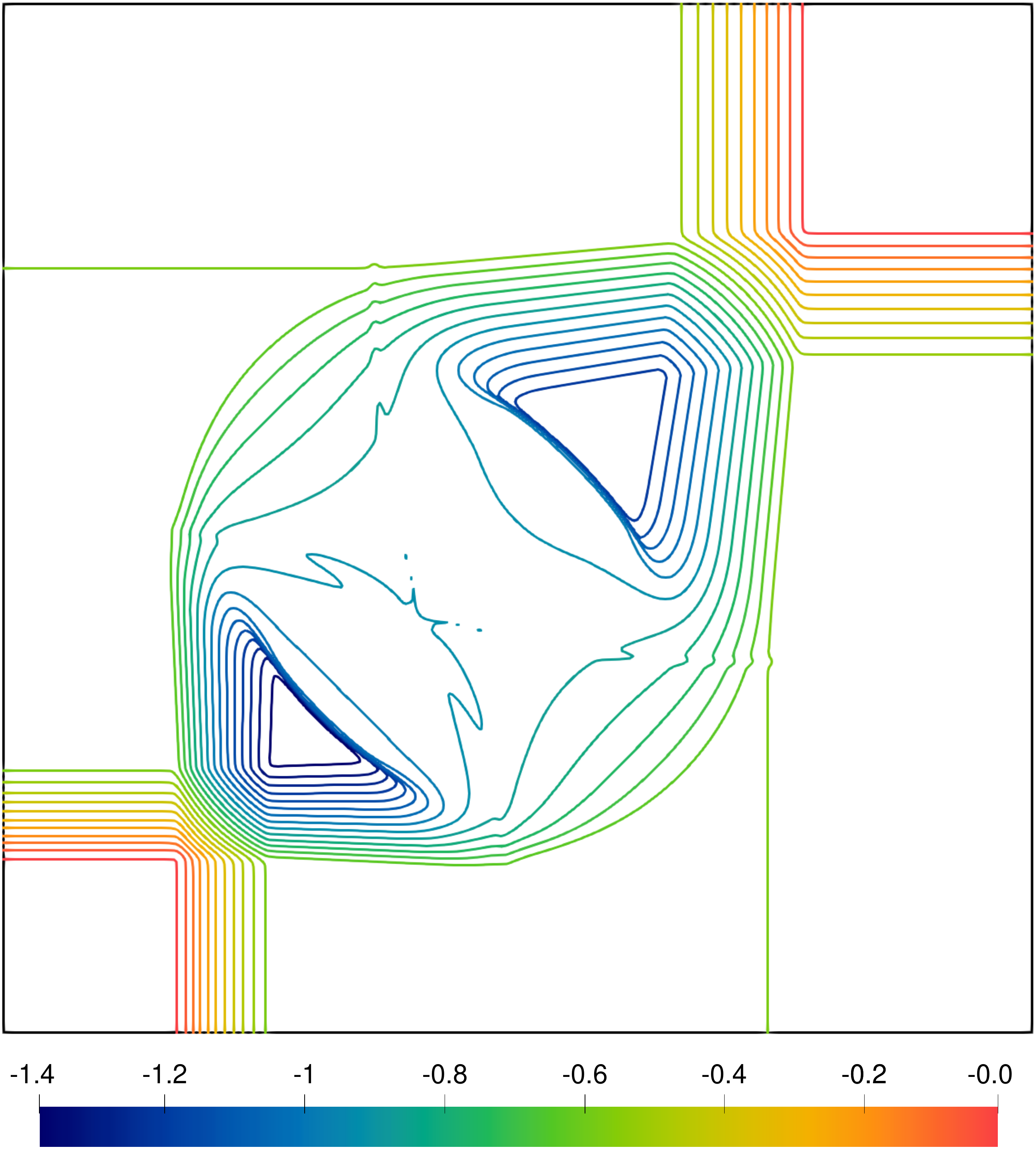}
         \caption{$\ln \rho$ with $25$ contour lines in $[-1.38, -0.01]$ using \rev{the scheme with degree $N=4$}.}
    \end{subfigure}
    \begin{subfigure}{0.45\textwidth}
         \includegraphics[width=\linewidth]{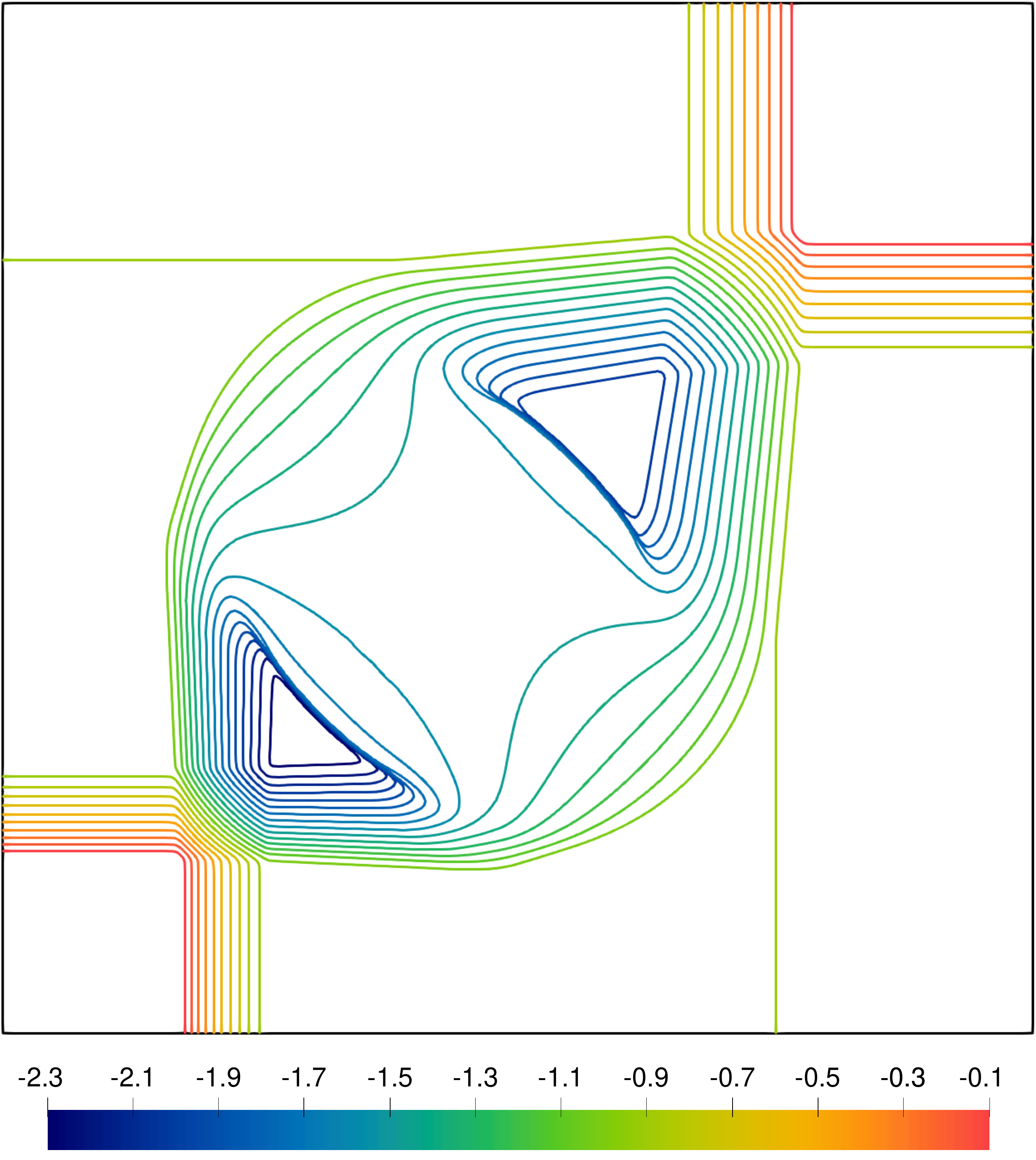}
         \caption{$\ln p$ with $25$ contour lines in $[-2.30, -0.01]$ using \rev{the scheme with degree $N=4$}.}
    \end{subfigure}
    \caption{Third Riemann problem in 2D: Plot of log of density and log of pressure using $400\times 400$ cells.}
    \label{2DRP3}
\end{figure}

\subsubsection{Third Riemann problem}
This Riemann problem is taken from~\cite{he2012adaptive}, and the initial condition is given by,
\[
(\rho, v_1, v_2, p) = \begin{cases}
        (1, 0, 0, 1) & \text{if}\ x > 0.5,\ y > 0.5\\
        (0.5771, -0.3529, 0, 0.4) & \text{if}\ x < 0.5,\ y>0.5\\
        (1, -0.3529, -0.3529, 1) & \text{if}\ x < 0.5,\ y < 0.5\\
        (0.5771, 0, -0.3529, 0.4) & \text{if}\ x > 0.5,\ y<0.5.\\
    \end{cases}
\]
The boundaries are taken as the outflow boundaries, and we run the simulation till $t=0.4$ with $400\times 400$ cells using \rev{the scheme with degrees $N=3,4$} and present the result in Figure~\ref{2DRP3}. We observe that the scheme can capture both the shock waves (symmetric to one another) properly. We also notice that for this test, there is not much difference in the performance of the \rev{scheme with $N=3$ or $N=4$}, which is a similar characteristic as noticed in~\cite{bhoriya2020entropy}.

\begin{figure}
    \centering
    \begin{subfigure}{0.45\textwidth}
         \includegraphics[width=\linewidth]{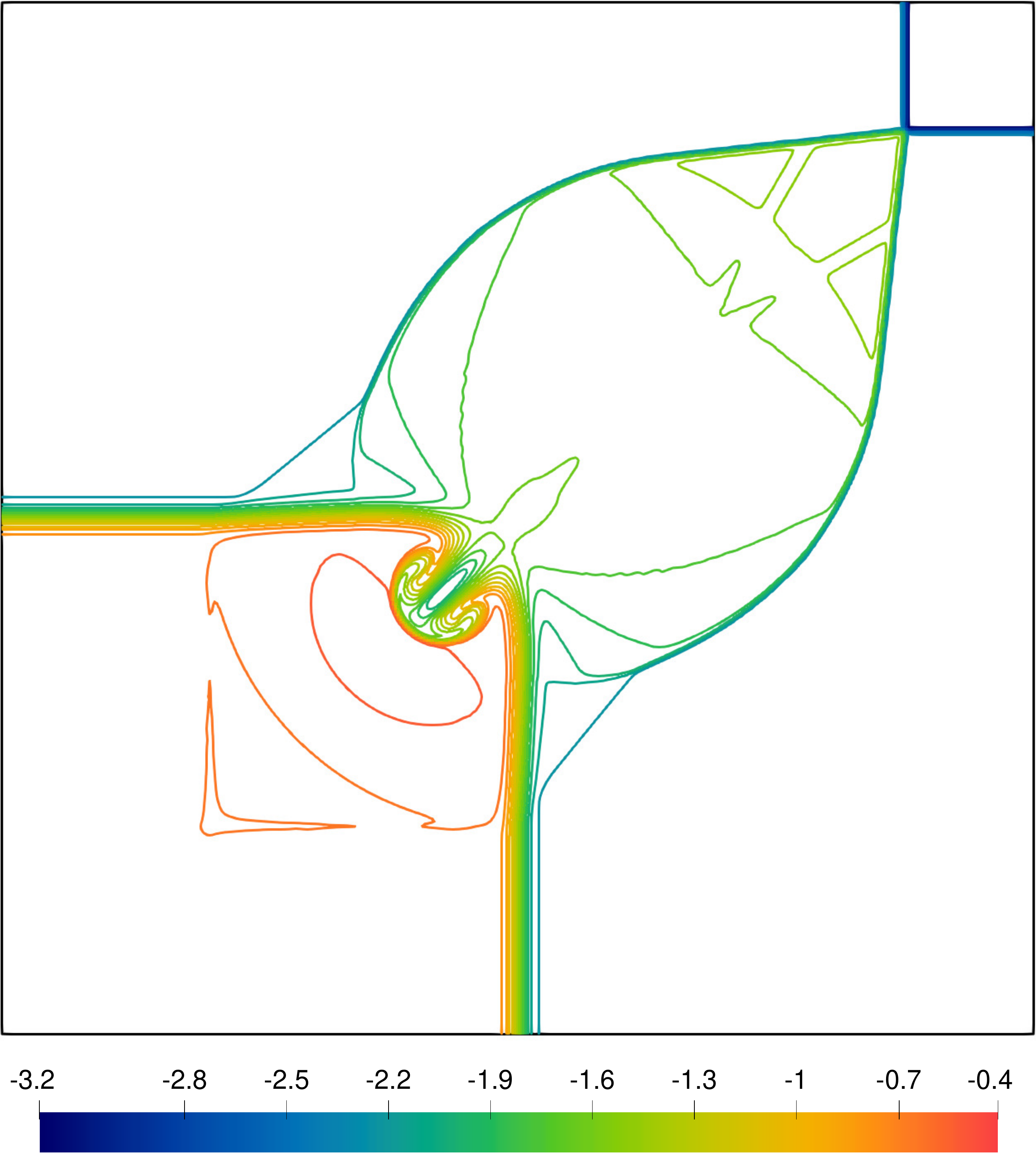}
         \caption{$\ln \rho$ with $25$ contour lines in $[-3.35, -0.41]$  using \rev{the scheme with degree $N=3$}.}
    \end{subfigure}
    \begin{subfigure}{0.45\textwidth}
         \includegraphics[width=\linewidth]{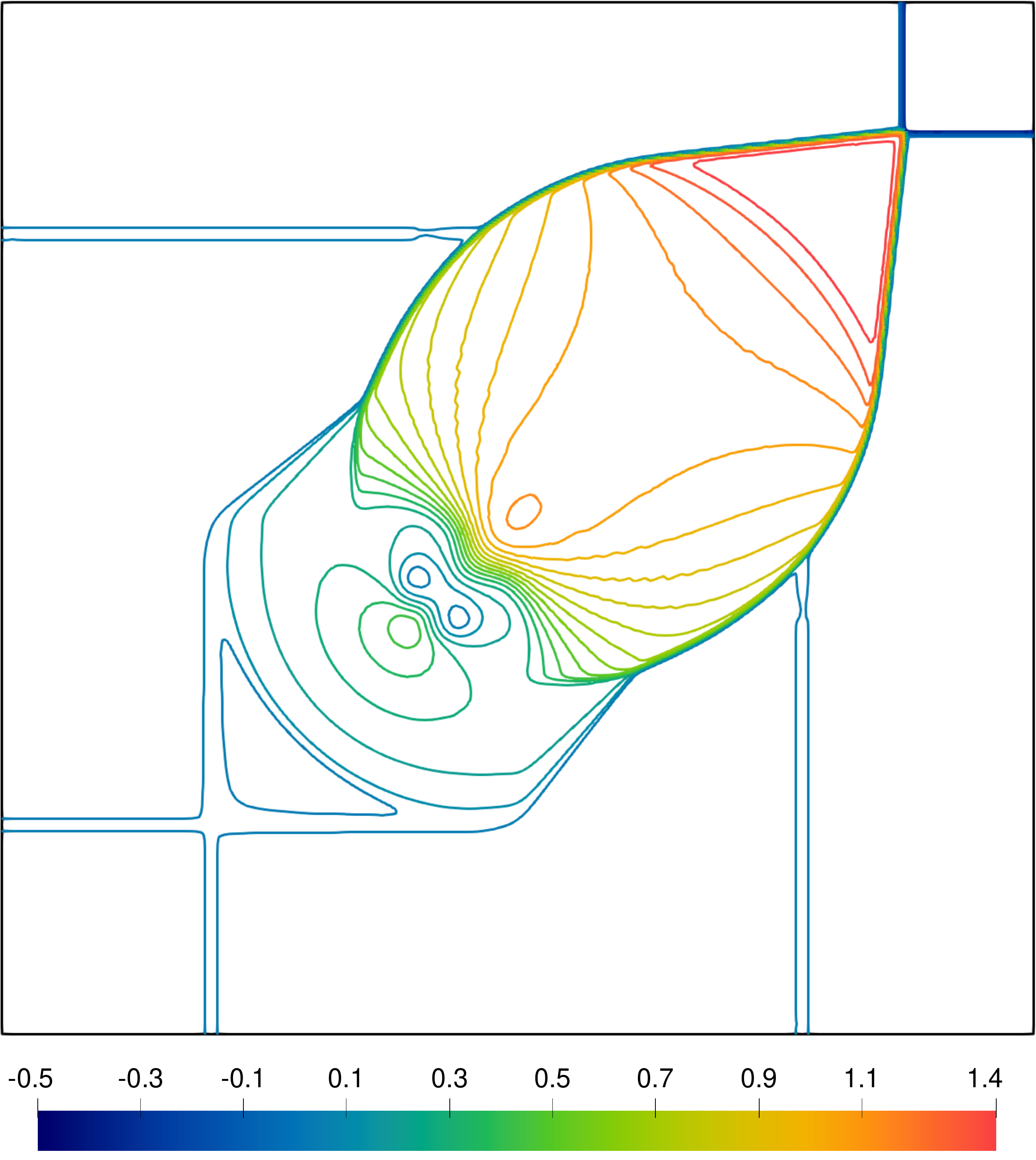}
         \caption{$\ln p$ with $25$ contour lines in $[-0.50, -1.36]$ using \rev{the scheme with degree $N=3$}.}
    \end{subfigure}
    \begin{subfigure}{0.45\textwidth}
         \includegraphics[width=\linewidth]{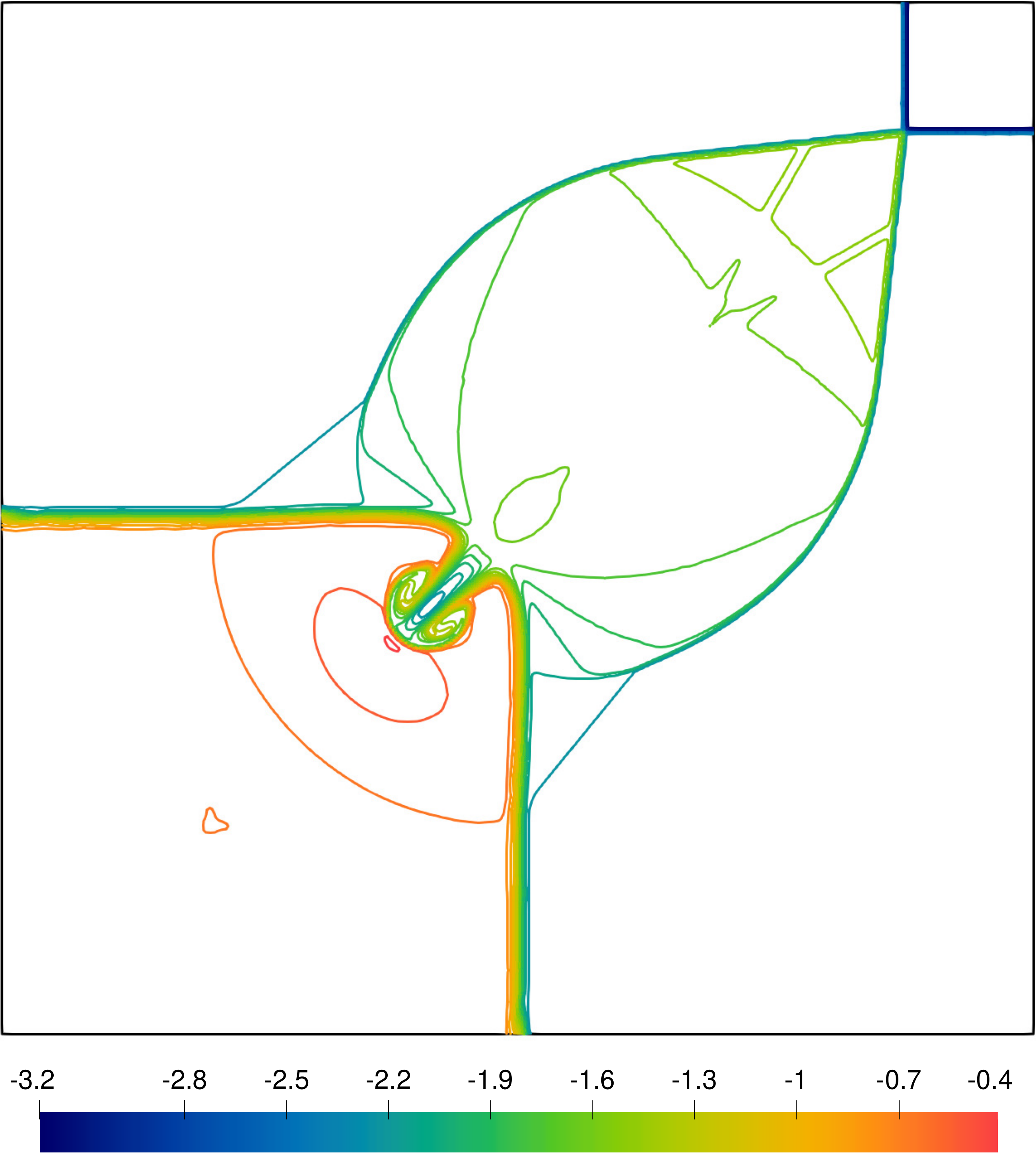}
         \caption{$\ln \rho$ with $25$ contour lines in $[-3.35, -0.41]$ using \rev{the scheme with degree $N=4$}.}
    \end{subfigure}
    \begin{subfigure}{0.45\textwidth}
         \includegraphics[width=\linewidth]{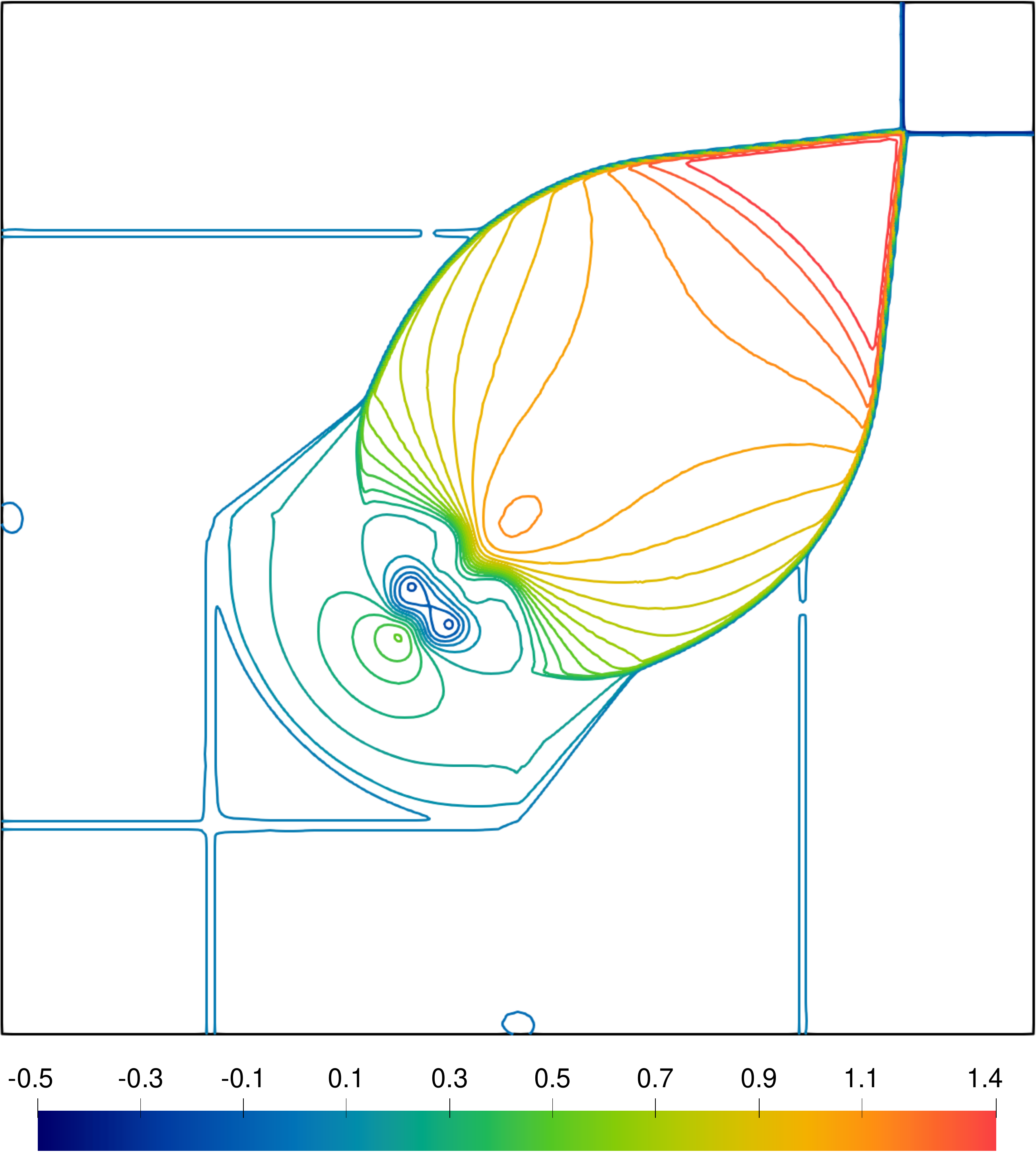}
         \caption{$\ln p$ with $25$ contour lines in $[-0.50, -1.36]$ using \rev{the scheme with degree $N=4$}.}
    \end{subfigure}
    \caption{Fourth Riemann problem in 2D: Plot of log of density and log of pressure using $400\times 400$ cells.}
    \label{2DRP4}
\end{figure}

\subsubsection{Fourth Riemann problem}
We consider one more test from~\cite{nunez2016xtroem}, whose solution has two contact discontinuities (near the left and the bottom boundaries). The initial state of the fluid is given by,
\begin{equation*}
    (\rho, v_1, v_2, p) = \begin{cases}
        (0.035145216124503, 0, 0, 0.162931056509027) & \text{if}\ x > 0.5,\ y > 0.5\\
        (0.1, 0.7, 0, 1) & \text{if}\ x < 0.5,\ y>0.5\\
        (0.5, 0, 0, 1) & \text{if}\ x < 0.5,\ y < 0.5\\
        (0.1, 0, 0.7, 1) & \text{if}\ x > 0.5,\ y<0.5.\\
    \end{cases}
\end{equation*}
We consider outflow boundary conditions at all the boundaries. By running the simulation till time $t=0.4$, we can see that a jet-like structure is formed in the lower-left quadrant as mentioned in~\cite{nunez2016xtroem}. We also observe that the scheme captures the structures in the solution effectively with both \rev{degrees $N=3,4$}. The simulation is run with $400\times 400$ cells, and the results are shown in Figure~\ref{2DRP4}.

We also present the result for this problem with the \rev{scheme with degree $N=4$} using the discontinuity indicator model of~\cite{hennemann2021provably} in Figure~\ref{2DRP4_gassner} and observe that it produces oscillatory results even in the central part, away from the sharp solution features, while the new discontinuity indicator model is able to eliminate these oscillations.

\begin{figure}[htbp]
    \centering
    \begin{subfigure}{0.45\textwidth}
         \includegraphics[width=\linewidth]{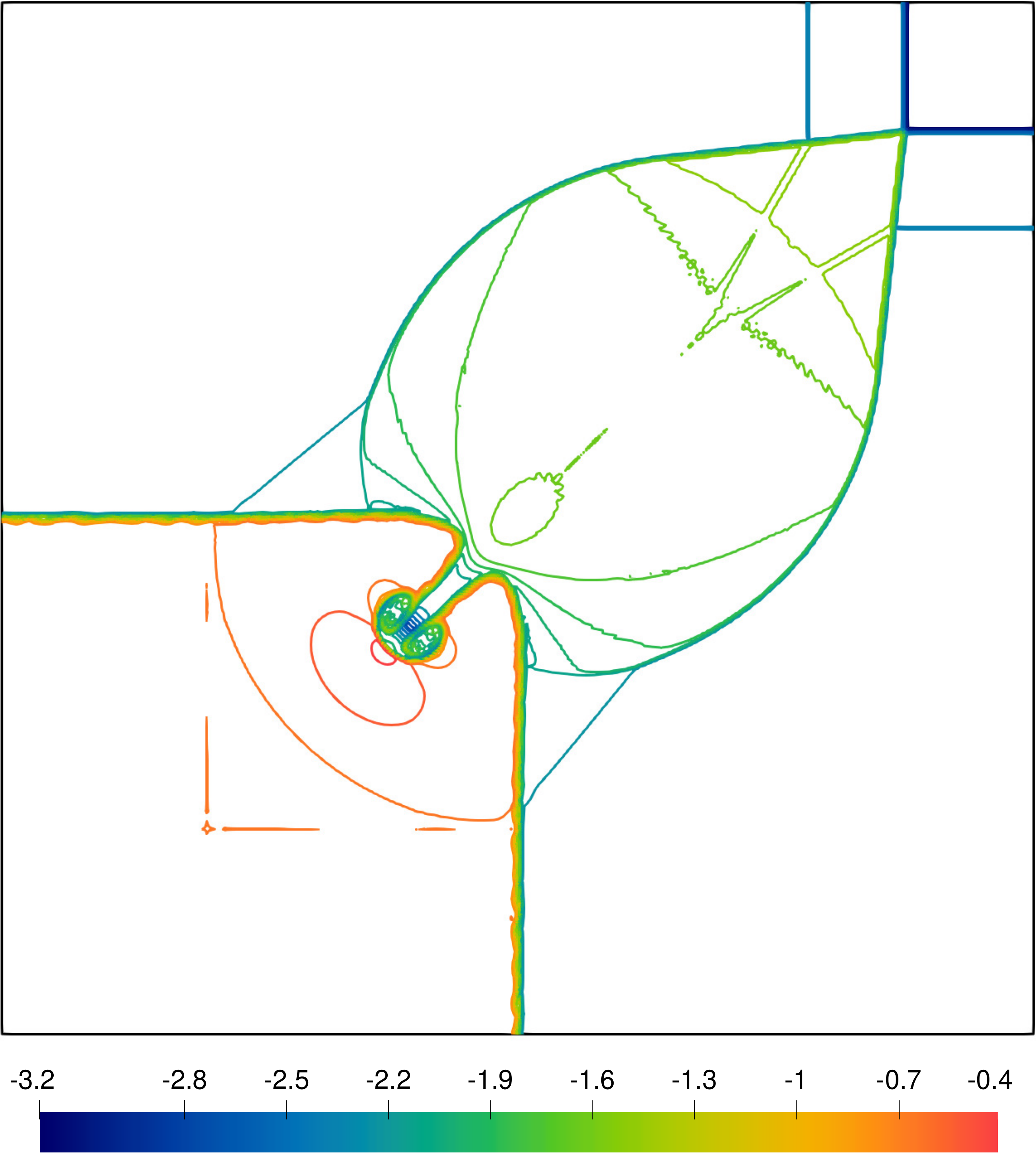}
         \caption{$\ln \rho$ with $25$ contour lines in $[-3.35, -0.41]$ using \rev{the scheme with degree $N=4$}.}
    \end{subfigure}
    \begin{subfigure}{0.45\textwidth}
         \includegraphics[width=\linewidth]{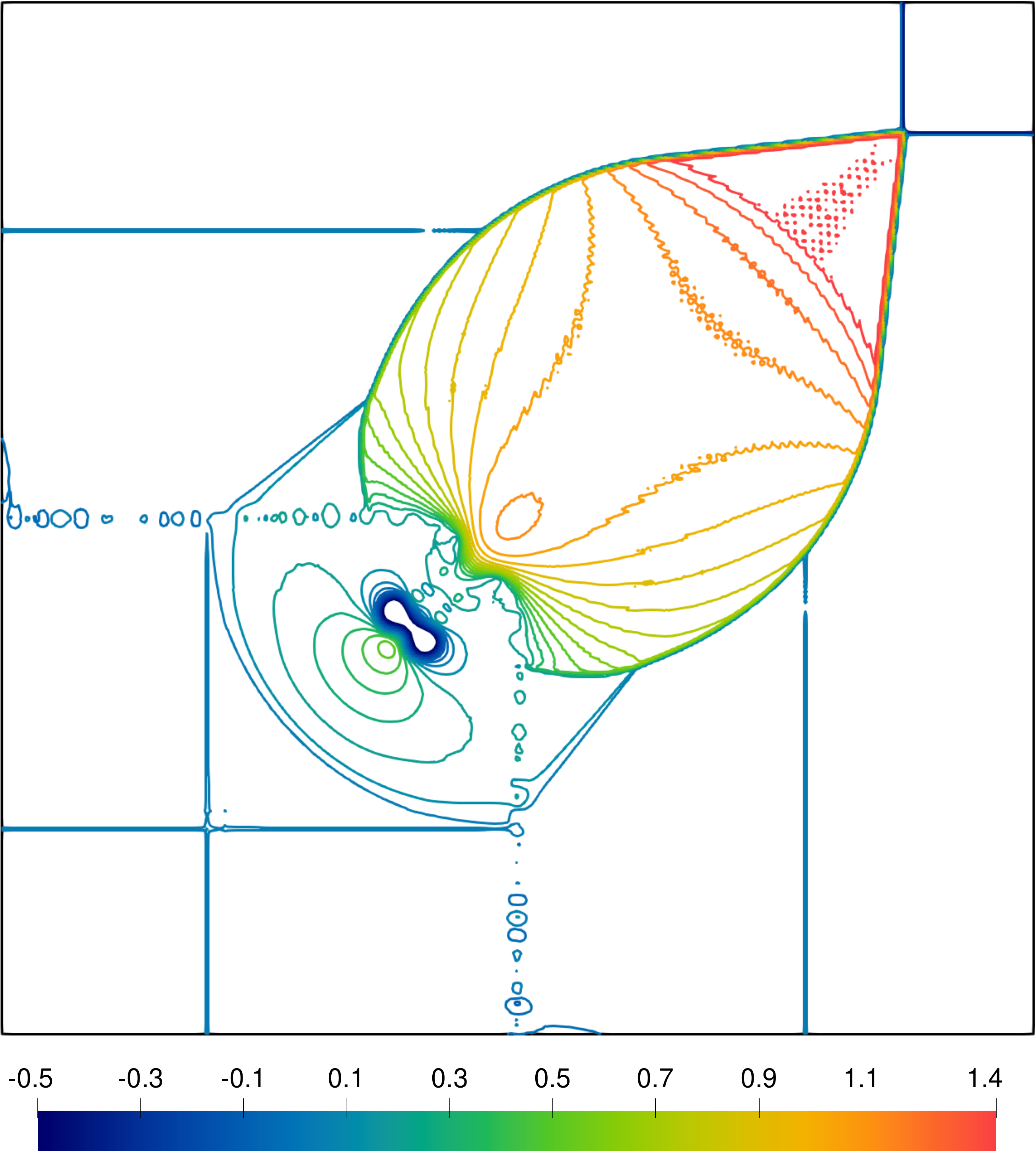}
         \caption{$\ln p$ with $25$ contour lines in $[-0.50, -1.36]$ using \rev{the scheme with degree $N=4$}.}
    \end{subfigure}
    \caption{Fourth Riemann problem in 2D: Plot of log of density and log of pressure using $400\times 400$ cells using indicator model from~\cite{hennemann2021provably}.}
    \label{2DRP4_gassner}
\end{figure}

\begin{figure}
    \centering
    \begin{subfigure}{0.45\textwidth}
         \includegraphics[width=\linewidth]{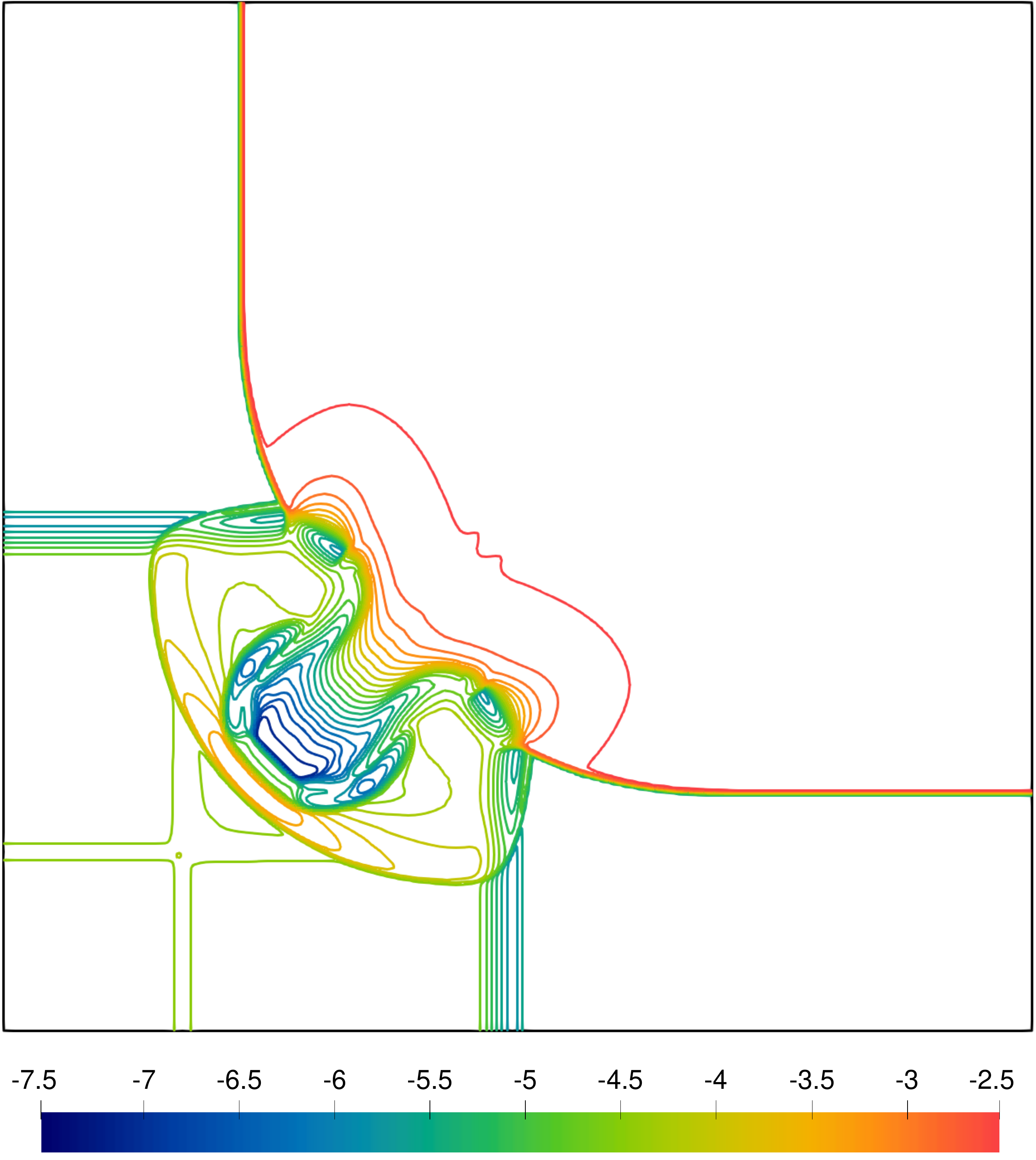}
         \caption{$\ln \rho$ with $25$ contour lines in $[-7.54, -2.30]$ using \rev{the scheme with degree $N=3$}.}
    \end{subfigure}
    \begin{subfigure}{0.45\textwidth}
         \includegraphics[width=\linewidth]{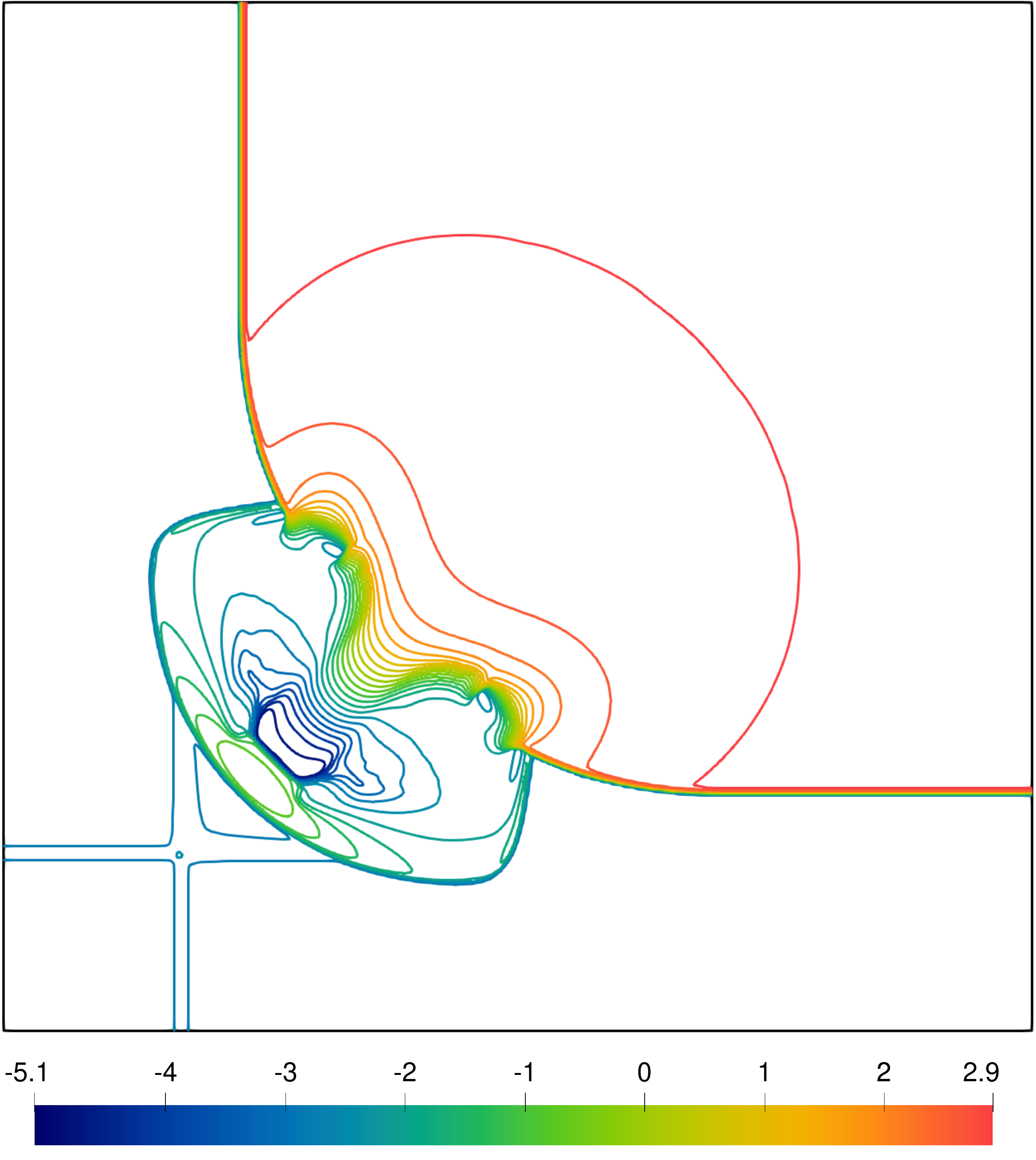}
         \caption{$\ln p$ with $25$ contour lines in $[-5.10, 2.90]$ using \rev{the scheme with degree $N=3$}.}
    \end{subfigure}
    \begin{subfigure}{0.45\textwidth}
         \includegraphics[width=\linewidth]{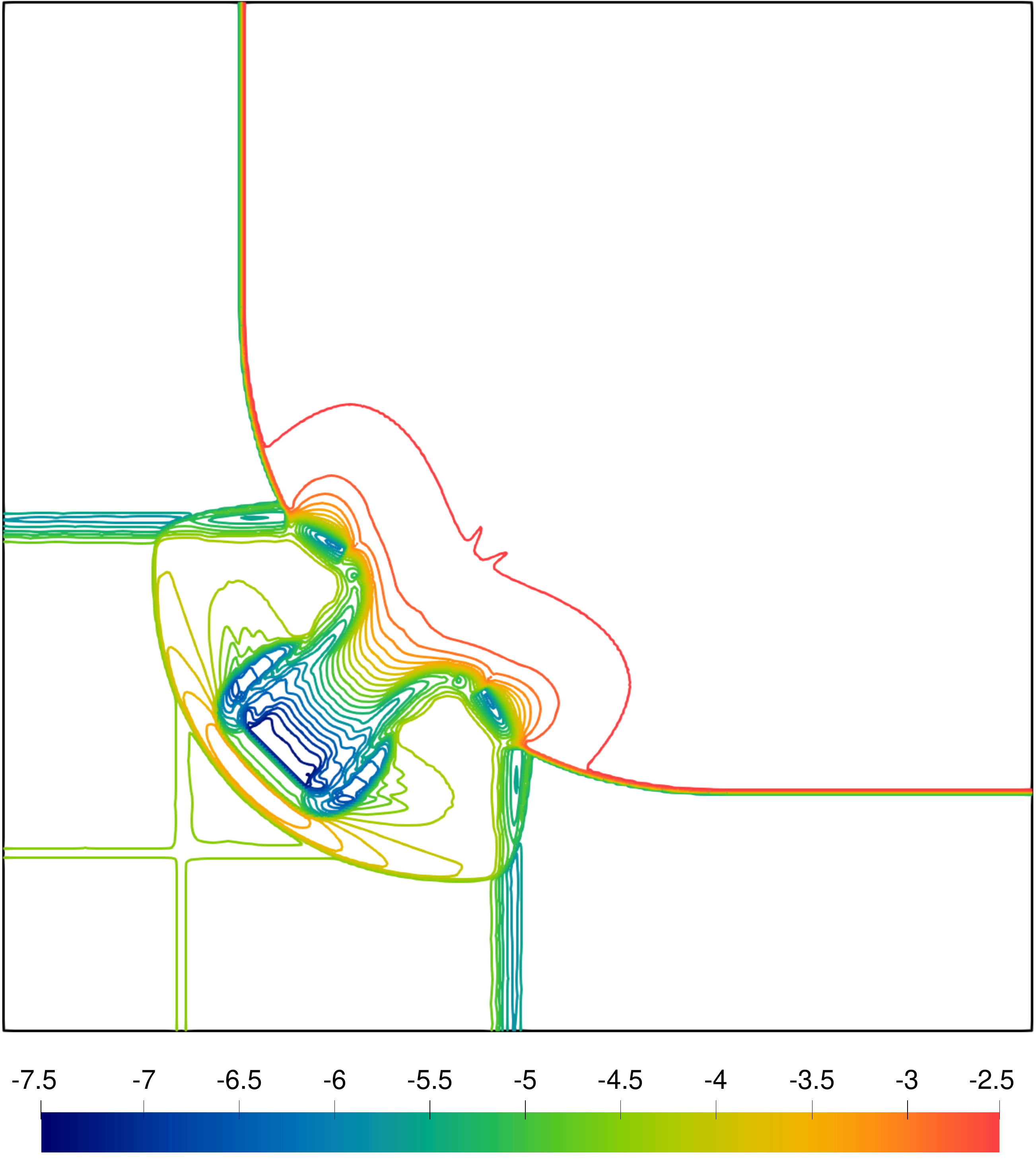}
         \caption{$\ln \rho$ with $25$ contour lines in $[-7.54, -2.30]$ using \rev{the scheme with degree $N=4$}.}
    \end{subfigure}
    \begin{subfigure}{0.45\textwidth}
         \includegraphics[width=\linewidth]{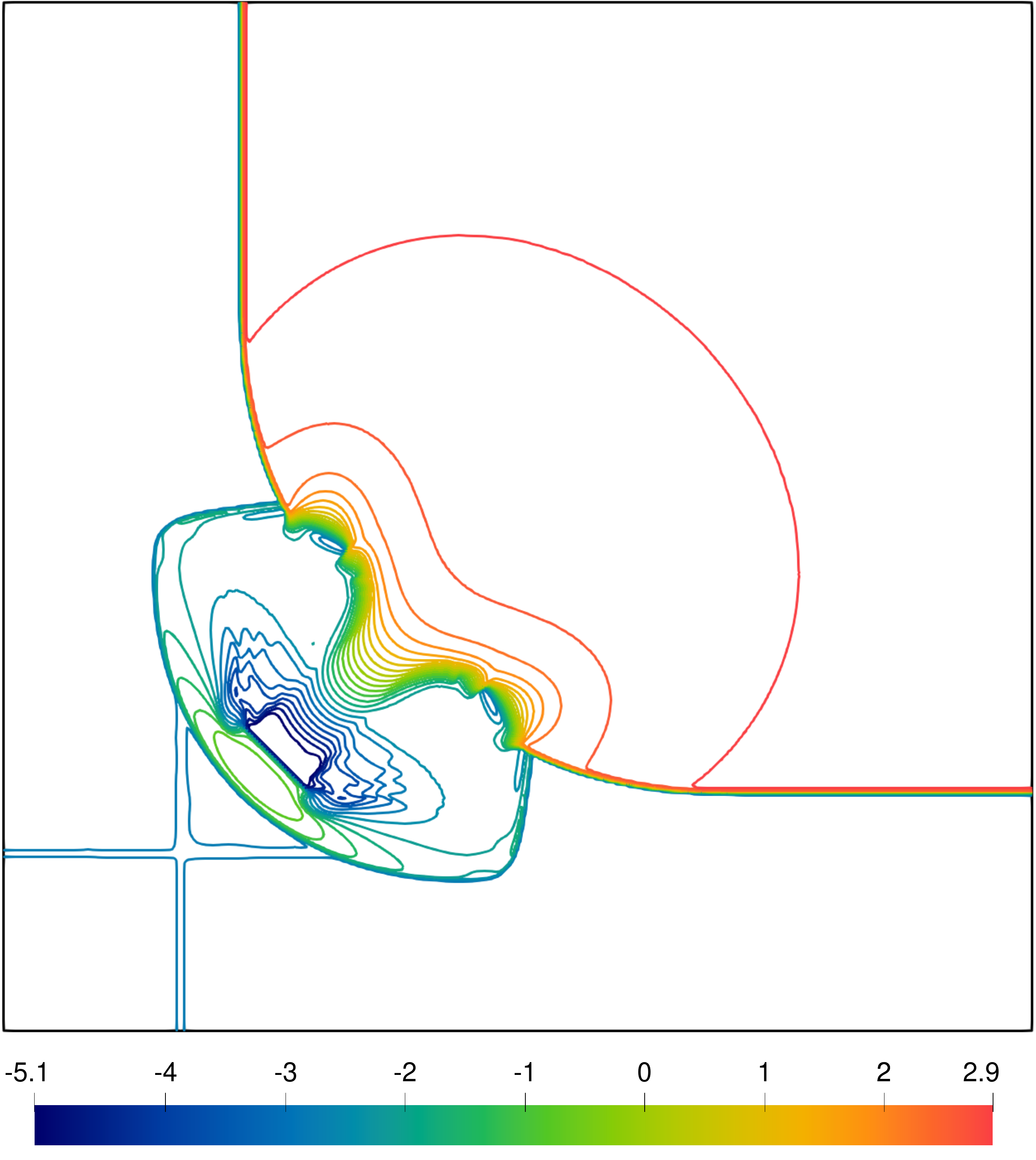}
         \caption{$\ln p$ with $25$ contour lines in $[-5.10, 2.90]$ using \rev{the scheme with degree $N=4$}.}
    \end{subfigure}
    \caption{Fifth Riemann problem in 2D: Plot of log of density and log of pressure using $400\times 400$ cells.}
    \label{2DRP5}
\end{figure}

\subsubsection{Fifth Riemann problem}
This test case is taken from~\cite{wu2016physical}, whose initial condition is given by,
\begin{equation*}
    (\rho, v_1, v_2, p) = \begin{cases}
        (0.1, 0, 0, 20) & \text{if}\ x > 0.5,\ y > 0.5\\
        (0.00414329639576, 0.9946418833556542, 0, 0.05) & \text{if}\ x < 0.5,\ y>0.5\\
        (0.01, 0, 0, 0.05) & \text{if}\ x < 0.5,\ y < 0.5\\
        (0.00414329639576, 0,0.9946418833556542, 0.05) & \text{if}\ x > 0.5,\ y<0.5.\\
    \end{cases}
\end{equation*}
We can see that in the upper-left and bottom-right quadrants, the density is very low with the fluid velocity very near to unity, making it a good test case to check the robustness of the scheme in terms of physical admissibility preservation. The simulation requires positivity correction, since otherwise the numerical solution violates the constraint $|v|<1$. We take the outflow boundary conditions at all the boundaries and run the simulation till time $t=0.4$ with $400\times 400$ cells and present the result in Figure~\ref{2DRP5}.
We observe that the \rev{scheme with degree $N=4$} can capture the curved shock structures more accurately than the \rev{scheme with degree $N=3$}.

\begin{figure}[!htbp]
    \centering
    \begin{subfigure}{0.8\textwidth}
         \includegraphics[width=\linewidth]{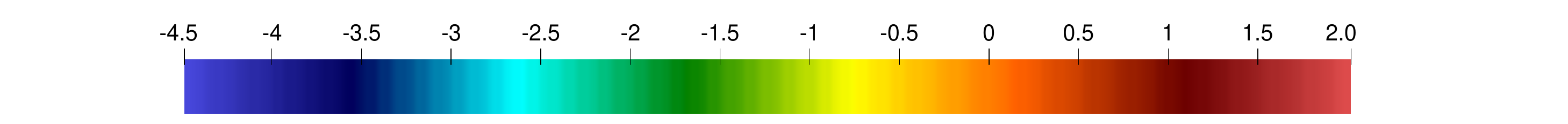}
    \end{subfigure}
    \begin{subfigure}{0.33\textwidth}
         \includegraphics[width=\linewidth]{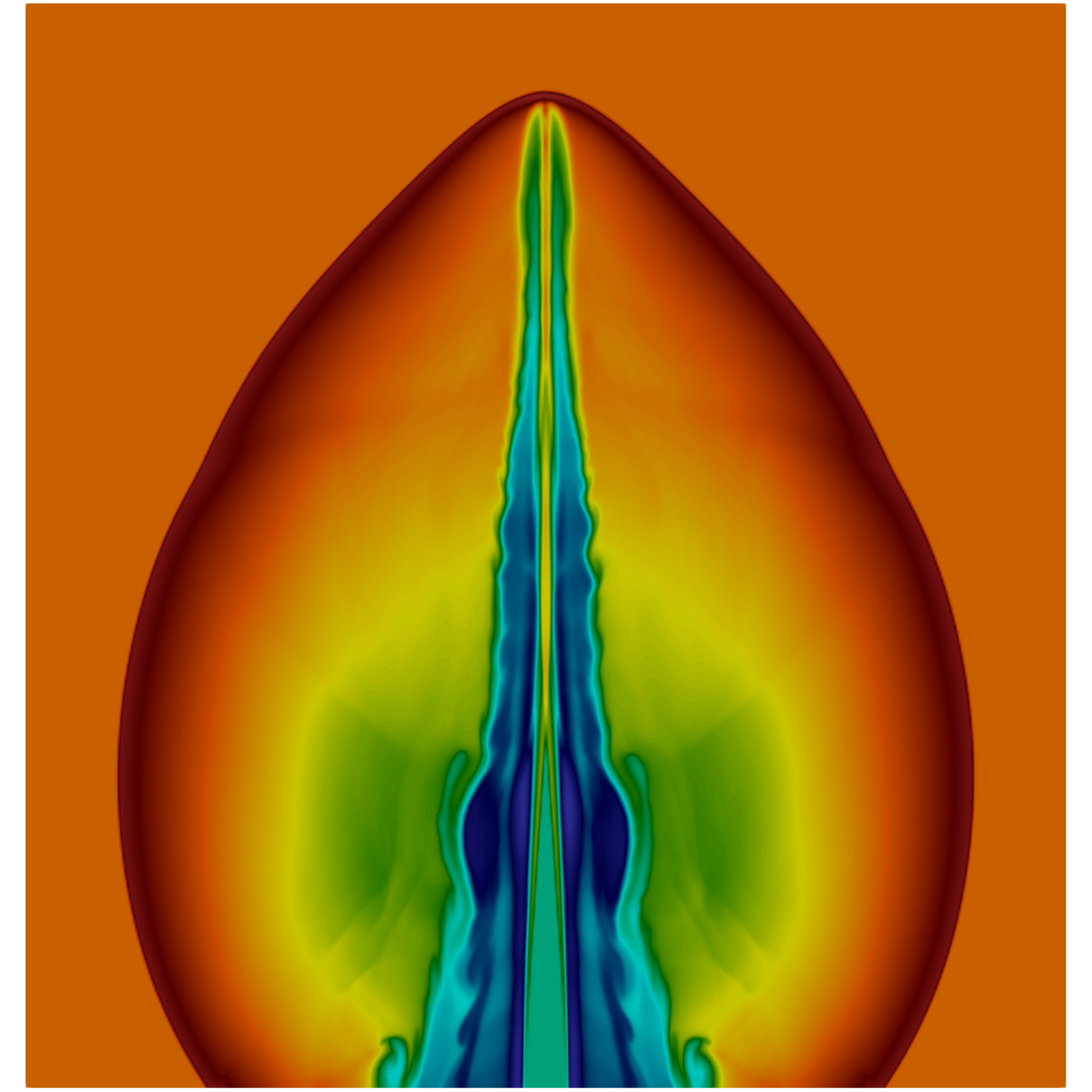}
         \caption{$\ln \rho$ for case~\ref{setup1}.}
    \end{subfigure}
    \begin{subfigure}{0.33\textwidth}
         \includegraphics[width=\linewidth]{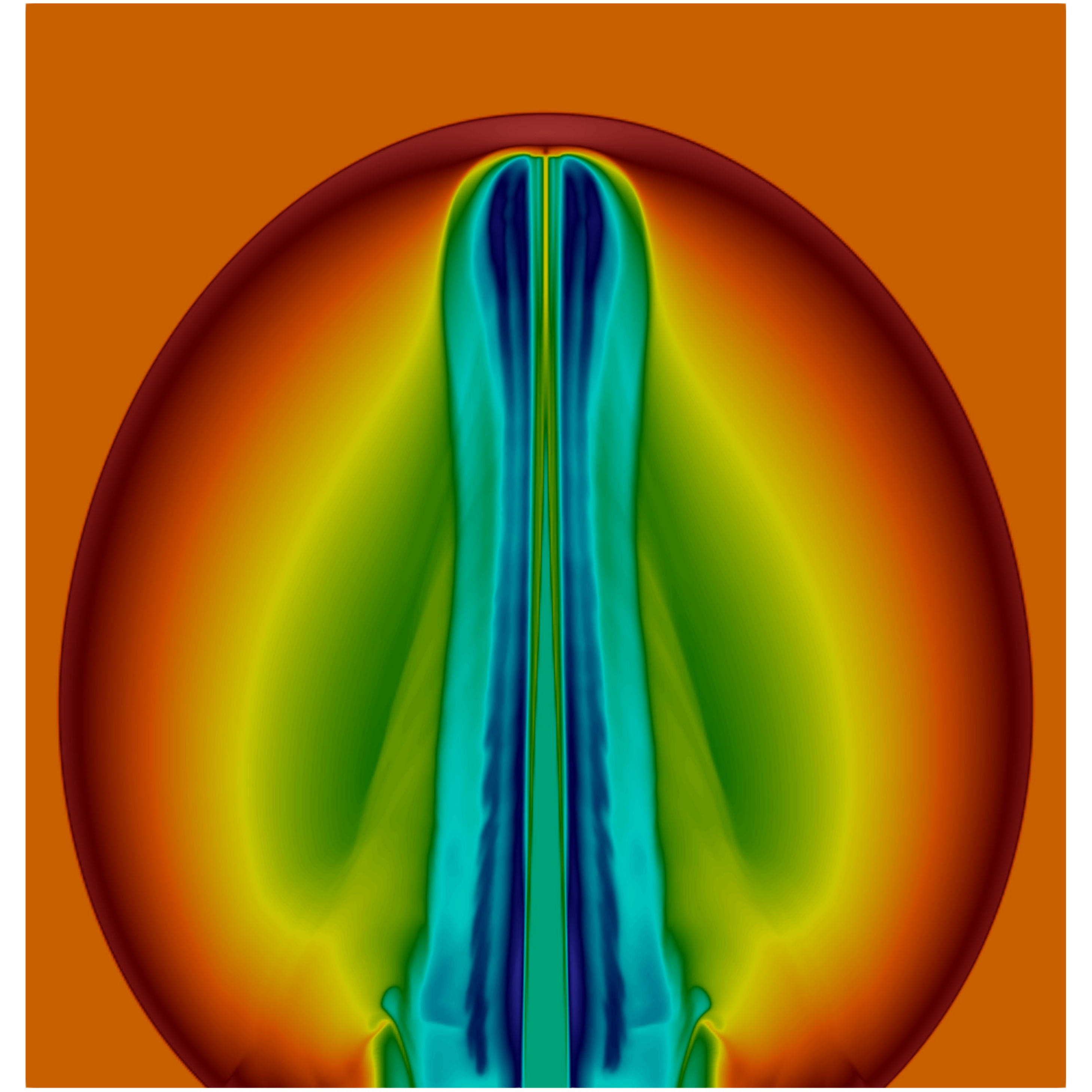}
         \caption{$\ln \rho$ for case~\ref{setup2}.}
    \end{subfigure}
    \begin{subfigure}{0.33\textwidth}
         \includegraphics[width=\linewidth]{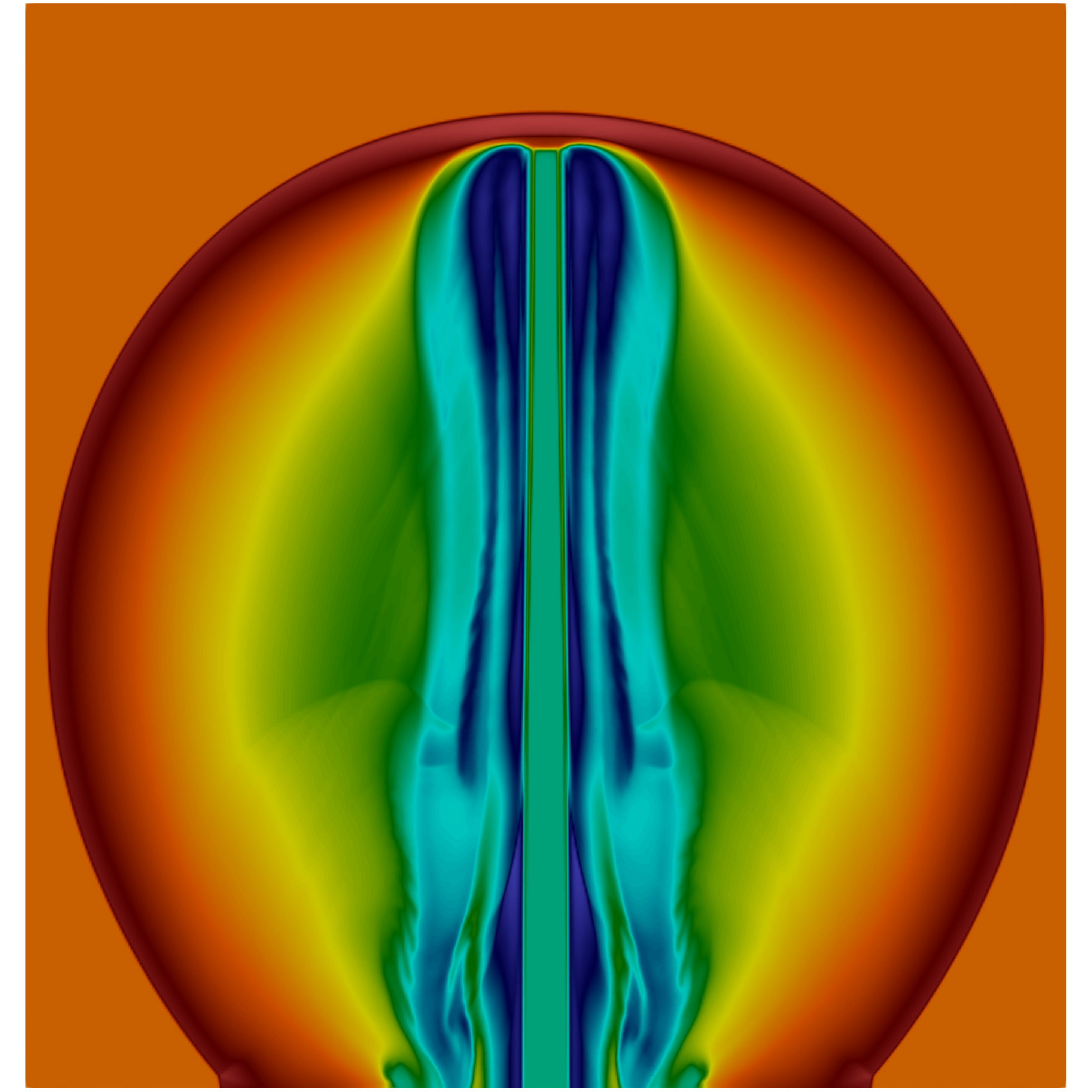}
         \caption{$\ln \rho$ for case~\ref{setup3}.}
    \end{subfigure}
    
    \caption{Relativistic jet problem: Plot of log of density using $480\times 500$ cells with \rev{the scheme with degree $N=4$}.}
    \label{reljet_density}
\end{figure}
\begin{figure}[!htbp]
    \centering
    \begin{subfigure}{0.8\textwidth}
         \includegraphics[width=\linewidth]{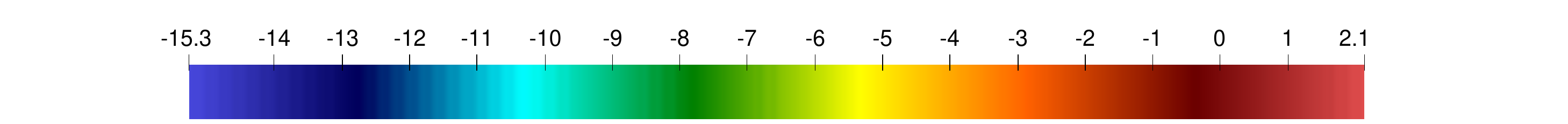}
    \end{subfigure}
    \begin{subfigure}{0.33\textwidth}
         \includegraphics[width=\linewidth]{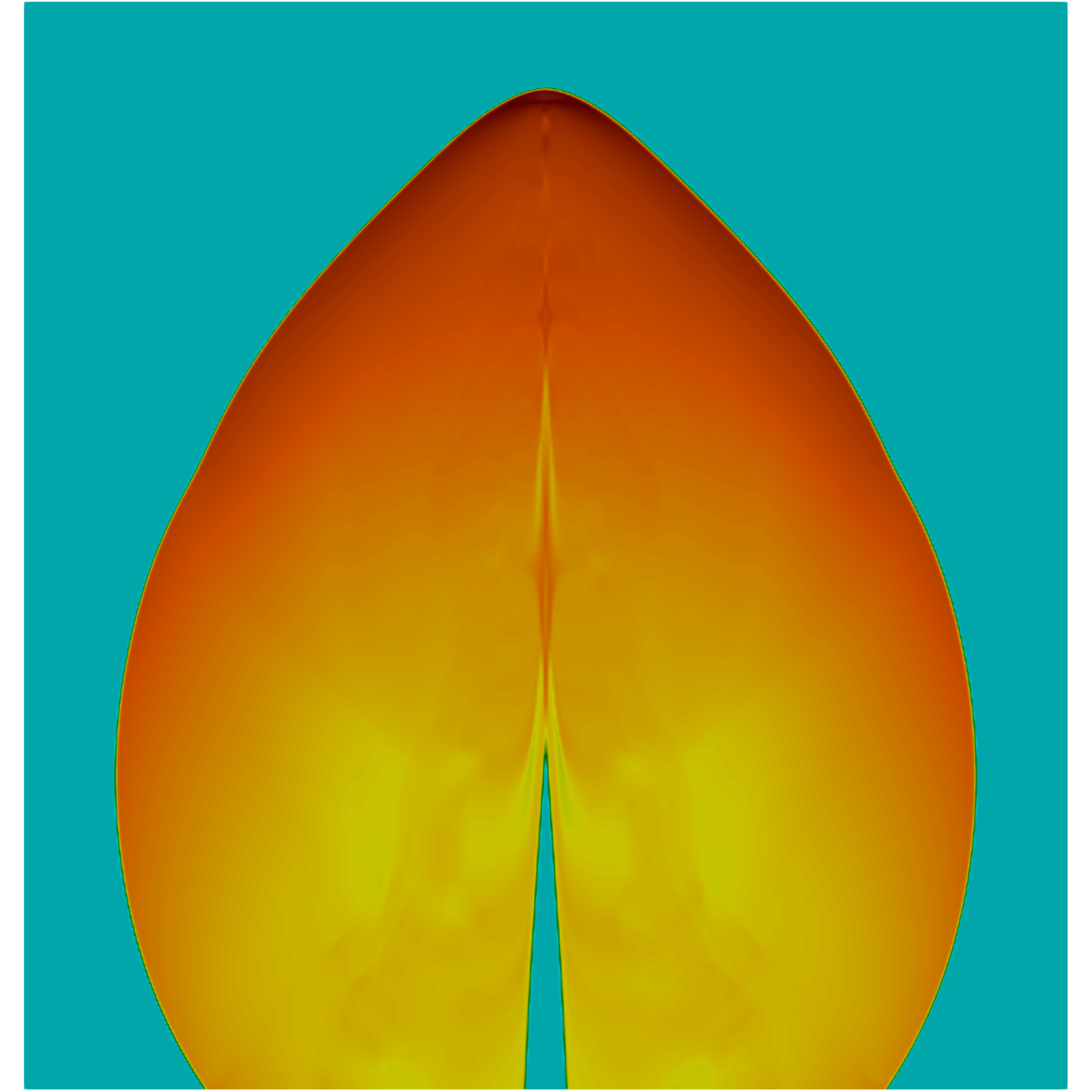}
         \caption{$\ln p$ for case~\ref{setup1}.}
    \end{subfigure}
    \begin{subfigure}{0.33\textwidth}
         \includegraphics[width=\linewidth]{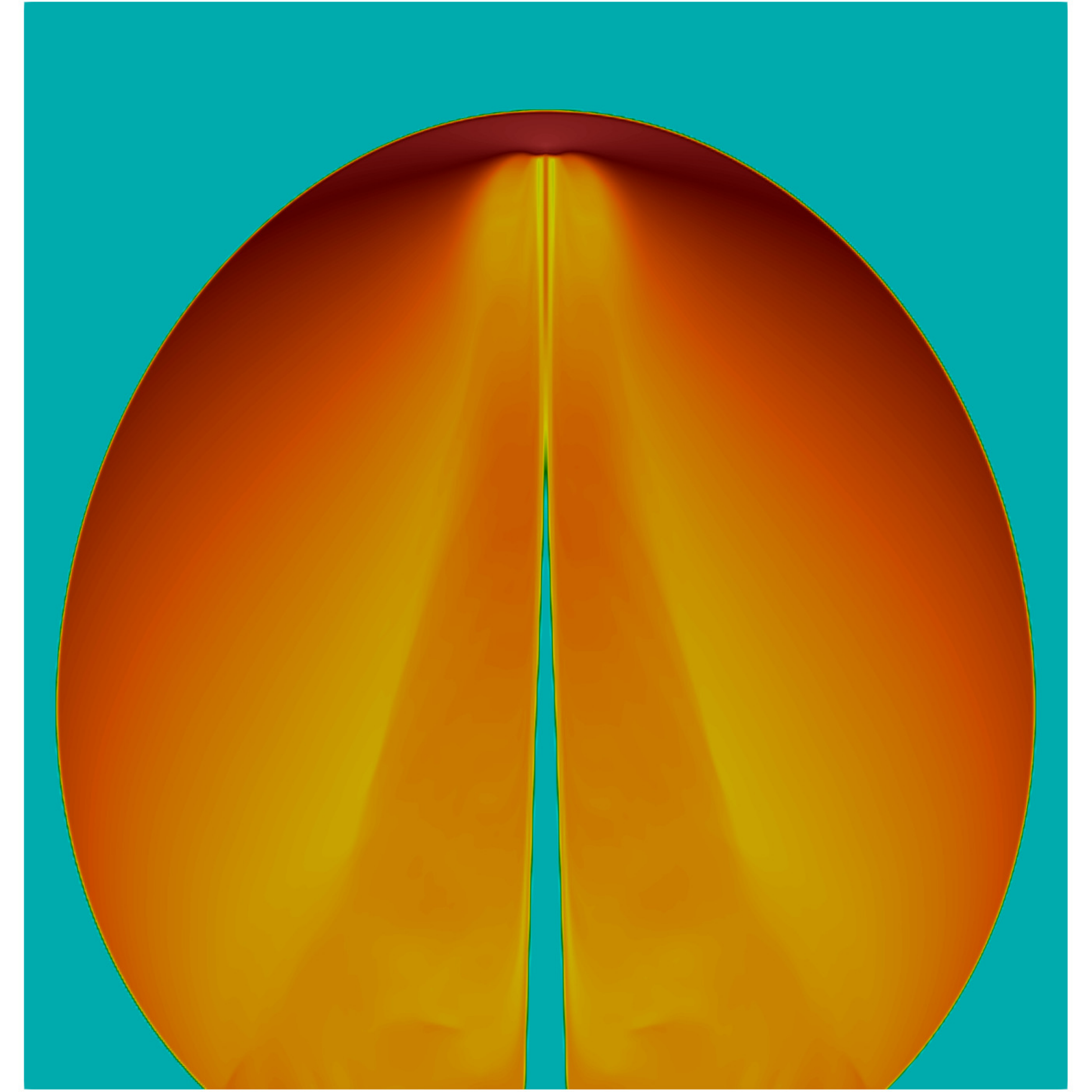}
         \caption{$\ln p$ for case~\ref{setup2}.}
    \end{subfigure}
    \begin{subfigure}{0.33\textwidth}
         \includegraphics[width=\linewidth]{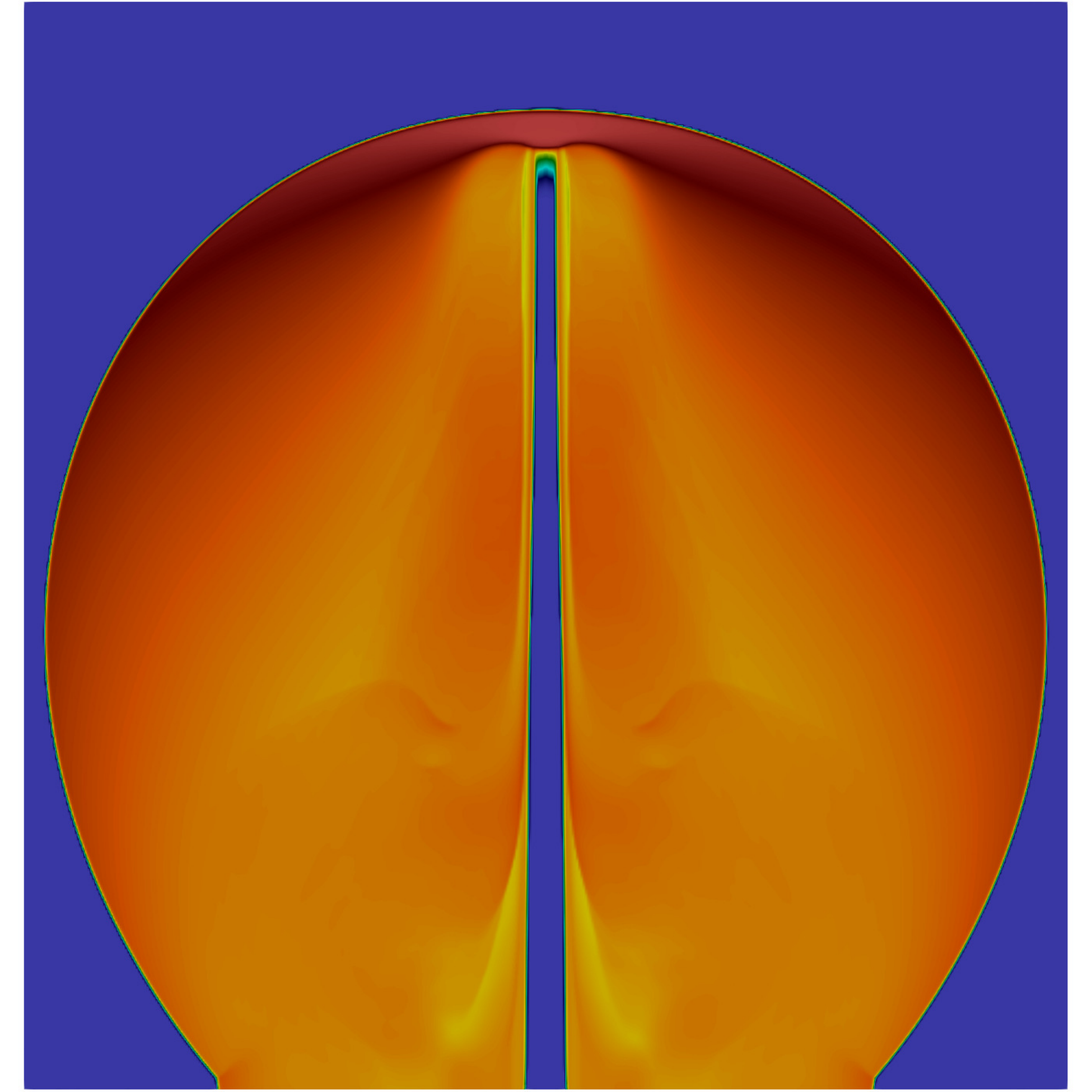}
         \caption{$\ln p$ for case~\ref{setup3}.}
    \end{subfigure}
    \caption{Relativistic jet problem: Plot of log of pressure using $480\times 500$ cells with \rev{the scheme with degree $N=4$}.}
    \label{reljet_pressure}
\end{figure}

\subsubsection{Relativistic jet problem}
This test case is taken from~\cite{wu2016physical}, which is relevant to model extra-galactic radio phenomena. This also plays a very important role in proving the robustness of the scheme because of the presence of a very high Lorentz factor and strong relativistic shock waves in its solution.

We consider three different experimental cases. But before that, let us fill the domain $[-12, 12]\times [0,25]$ with fluid of unit density. We consider the inflow boundary condition partially in the bottom such that $|x|<0.5$ according to the different cases explained below and outflow boundary conditions at all the other parts of the boundary. In all the test cases, we inject the same fluid but having density $0.1$ through $\{(x,y):|x|<0.5,y=0\}$ with different velocities.

\begin{enumerate}[label=(\roman*)]
    \item Injected velocity of the fluid is $0.99$, and pressure is the same as the ambient pressure with classical Mach number equal to $50$.\label{setup1}

    \item Injected velocity of the fluid is $0.999$, and pressure is the same as the ambient pressure with classical Mach number equal to $50$.\label{setup2}

    \item Injected velocity of the fluid is $0.9999$, and pressure is the same as the ambient pressure with classical Mach number equal to $500$.\label{setup3}
\end{enumerate}

We run the simulation till time $t=30, 25, 23$ for the first, second, and third cases respectively, with $480\times 500$ cells using the \rev{scheme with degree $N=4$}. The results are shown in Figure~\ref{reljet_density} and Figure~\ref{reljet_pressure}. We observe that the scheme can capture different wave structures of the solution effectively for all three cases having very high Lorentz factors of $7.09$, $22.37$, and $70.71$ respectively.

\begin{figure}[ht]
    \centering
    \begin{subfigure}{0.85\textwidth}
         \includegraphics[width=\linewidth]{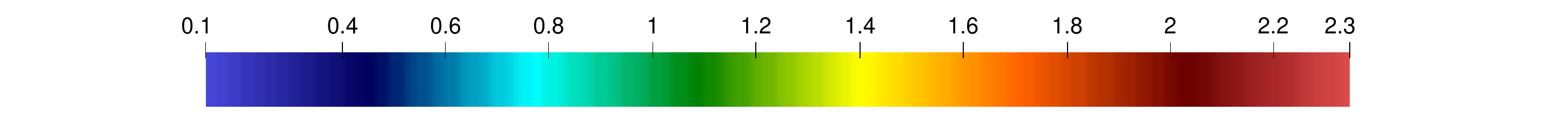}
    \end{subfigure}
    \begin{subfigure}{0.85\textwidth}
         \includegraphics[width=\linewidth]{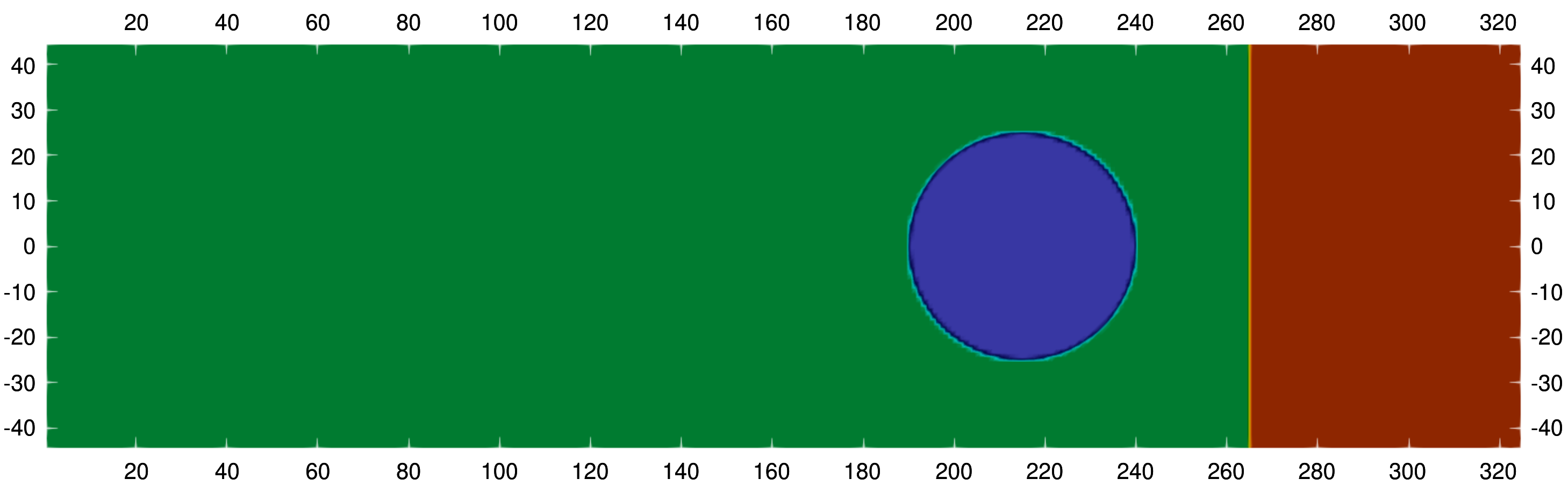}
         \caption{At time $t=0$.}
    \end{subfigure}
    \begin{subfigure}{0.85\textwidth}
         \includegraphics[width=\linewidth]{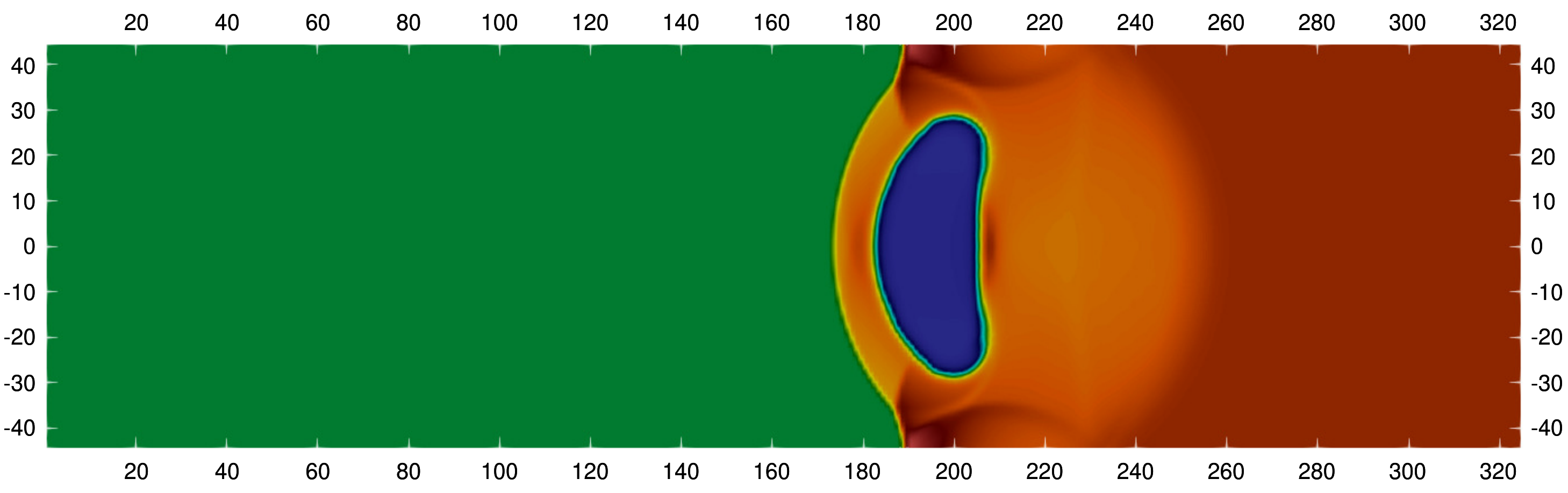}
         \caption{At time $t=180$.}
    \end{subfigure}
    \begin{subfigure}{0.85\textwidth}
         \includegraphics[width=\linewidth]{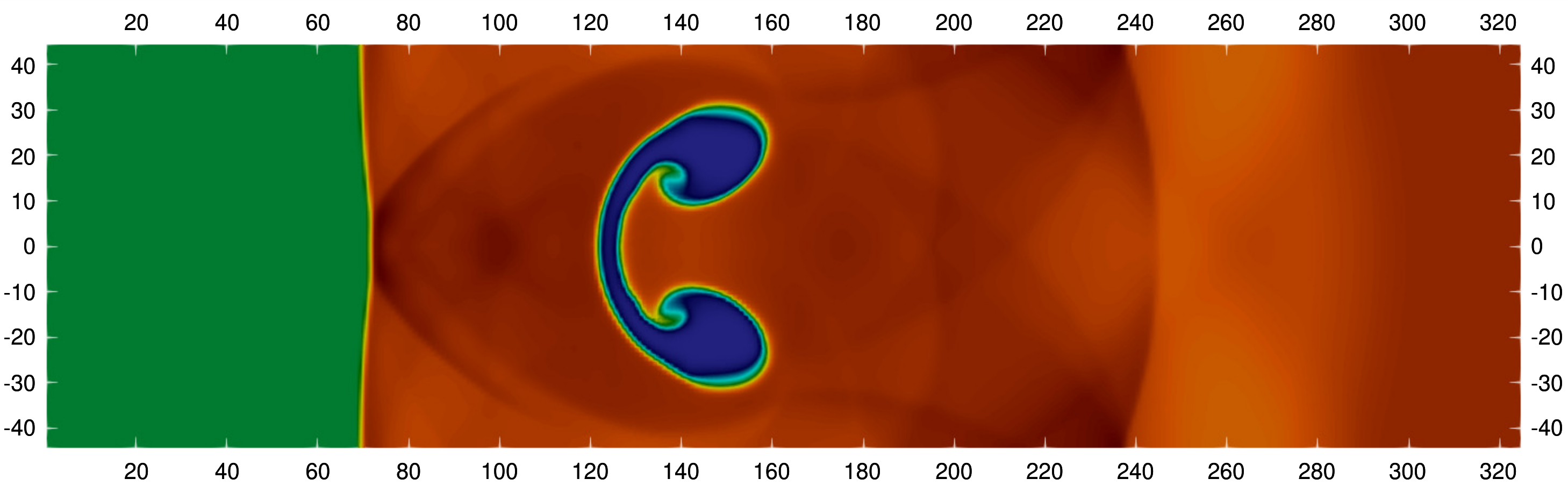}
         \caption{At time $t=450$.}
    \end{subfigure}
    \caption{Shock and bubble interaction: Plot of density using $325\times 90$ cells \rev{and using the scheme with degree $N=4$} for the case~\ref{setup1_bs}.}
    \label{shock_bubble1}
\end{figure}

 \begin{figure}[ht]
    \centering
    \begin{subfigure}{0.85\textwidth}
         \includegraphics[width=\linewidth]{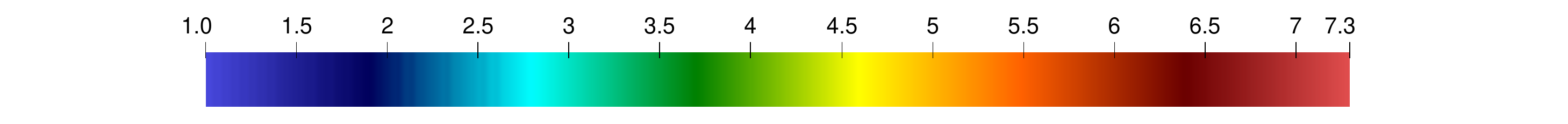}
    \end{subfigure}
    \begin{subfigure}{0.85\textwidth}
         \includegraphics[width=\linewidth]{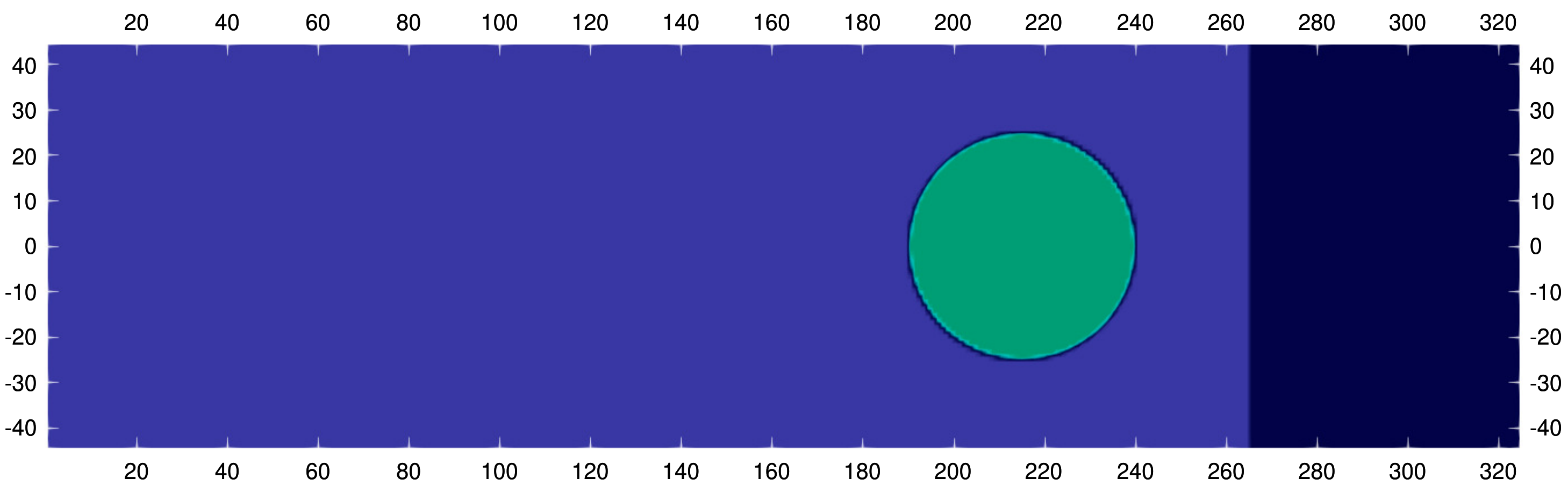}
         \caption{At time $t=0$.}
    \end{subfigure}
    \begin{subfigure}{0.85\textwidth}
         \includegraphics[width=\linewidth]{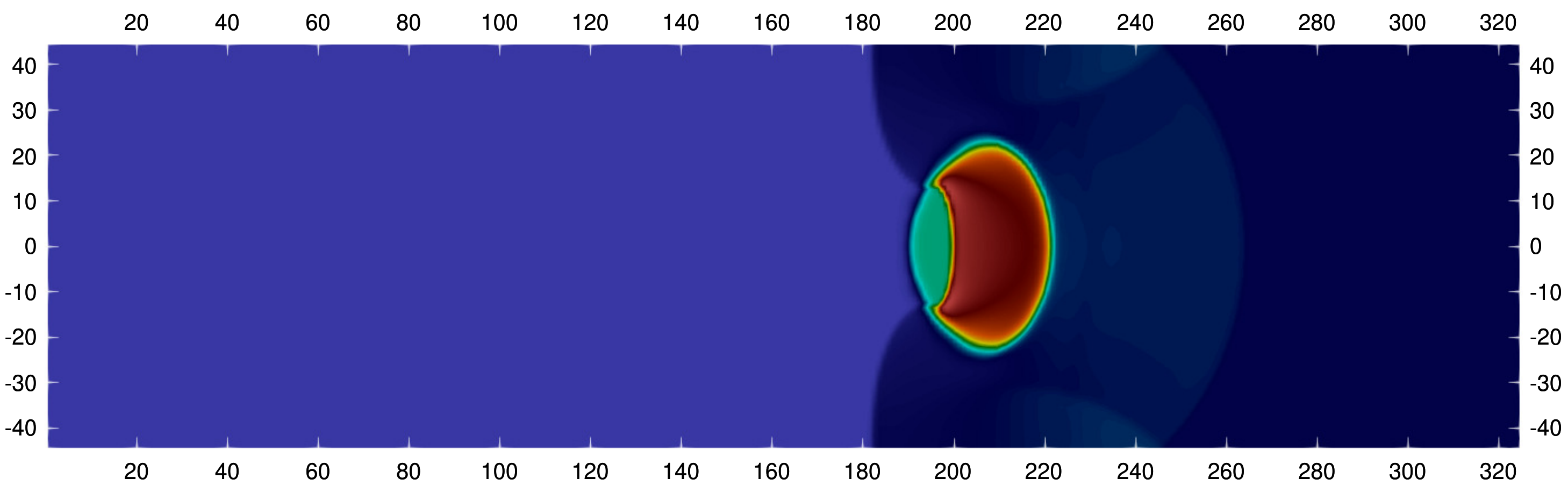}
         \caption{At time $t=200$.}
    \end{subfigure}
    \begin{subfigure}{0.85\textwidth}
         \includegraphics[width=\linewidth]{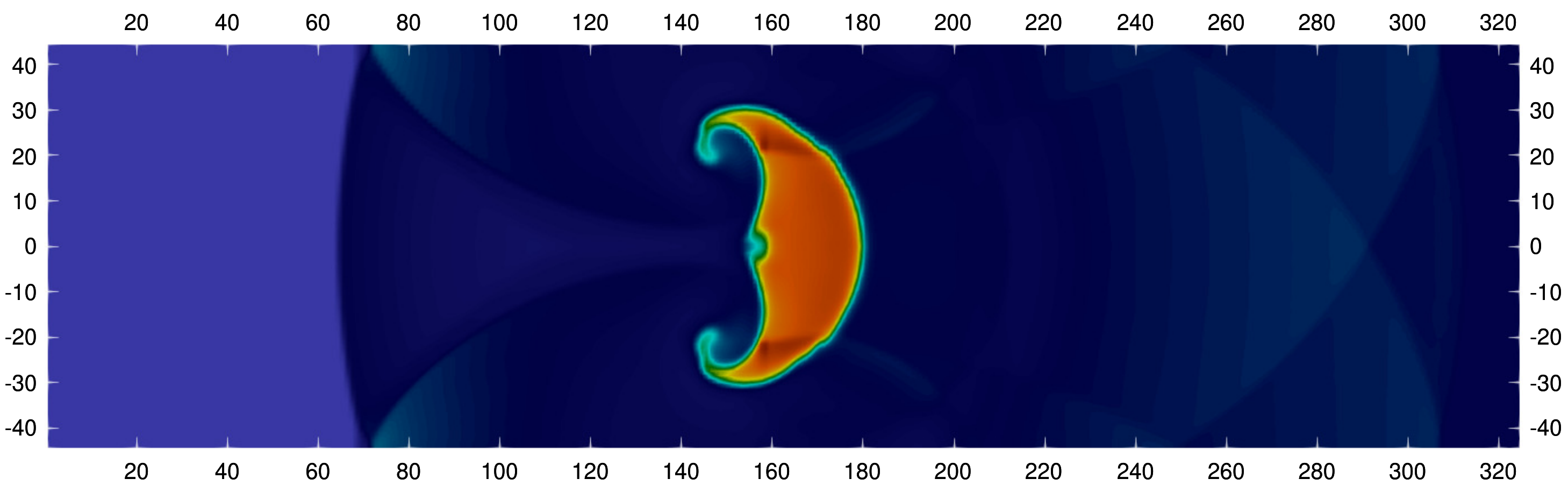}
         \caption{At time $t=500$.}
    \end{subfigure}
    
    \caption{Shock and bubble interaction: Plot of density using $325\times 90$ cells \rev{and using the scheme with degree $N=4$} for the case~\ref{setup2_bs}.}
    \label{shock_bubble2}
\end{figure}

\subsubsection{Shock and bubble interaction}
These types of problems are widely used in the literature for non-relativistic cases. We have taken this problem from~\cite{he2012adaptive}, where the authors have modified it for the relativistic case. This is a very interesting problem to check the performance of the scheme where a shock interacts with a bubble in the domain, forming various wave structures.

We consider the domain of computation to be $[0, 325]\times [-45, 45]$ with the reflective boundary conditions at the upper and bottom parts of the boundary and use constant left and right shock states at the left and right boundaries, respectively. Initially, the shock is placed at $x=265$ with the following initial condition,
\begin{equation*}
    (\rho, v_1, v_2, p) = \begin{cases}
        (1, 0, 0, 0.05) & \text{if}\ x < 265\\
        (1.865225080631180, -0.196781107378299, 0, 1.5) & \text{if}\ x > 265\\
    \end{cases}
\end{equation*}
 and a bubble of radius $25$ is placed inside the domain with center at $(215,0)$ in two different cases:
 \begin{enumerate}[label=(\roman*)]
     \item The bubble is filled with a lighter fluid having $\text{density} = 0.1358$ with $\text{pressure} = 0.05$.\label{setup1_bs}
     \item The bubble is filled with a heavier fluid having $\text{density} = 3.1538$ with $\text{pressure} = 0.05$.\label{setup2_bs}
 \end{enumerate}
 The simulation is run till $t=450$, $t=500$ for the cases~\ref{setup1_bs},~\ref{setup2_bs} respectively, and the density profiles are presented in Figure~\ref{shock_bubble1} and Figure~\ref{shock_bubble2} respectively at several time levels. We observe that the scheme can effectively capture all the waves in the solution during and after the interaction.

\begin{figure}[htbp]
    \centering
    \begin{subfigure}{0.45\textwidth}
         \includegraphics[width=\linewidth]{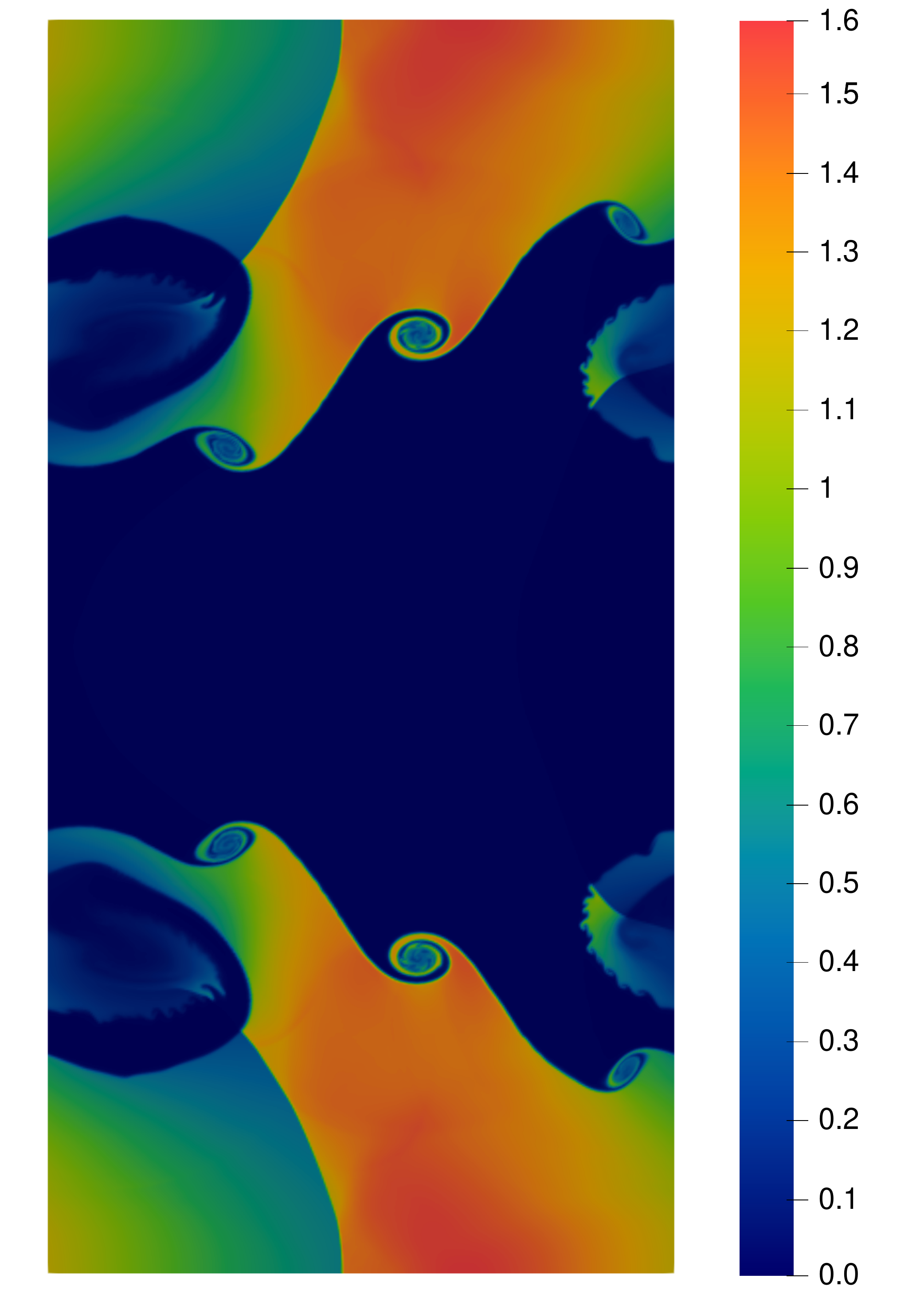}
         \caption{Using our indicator model.}
    \end{subfigure}
    \begin{subfigure}{0.45\textwidth}
         \includegraphics[width=\linewidth]{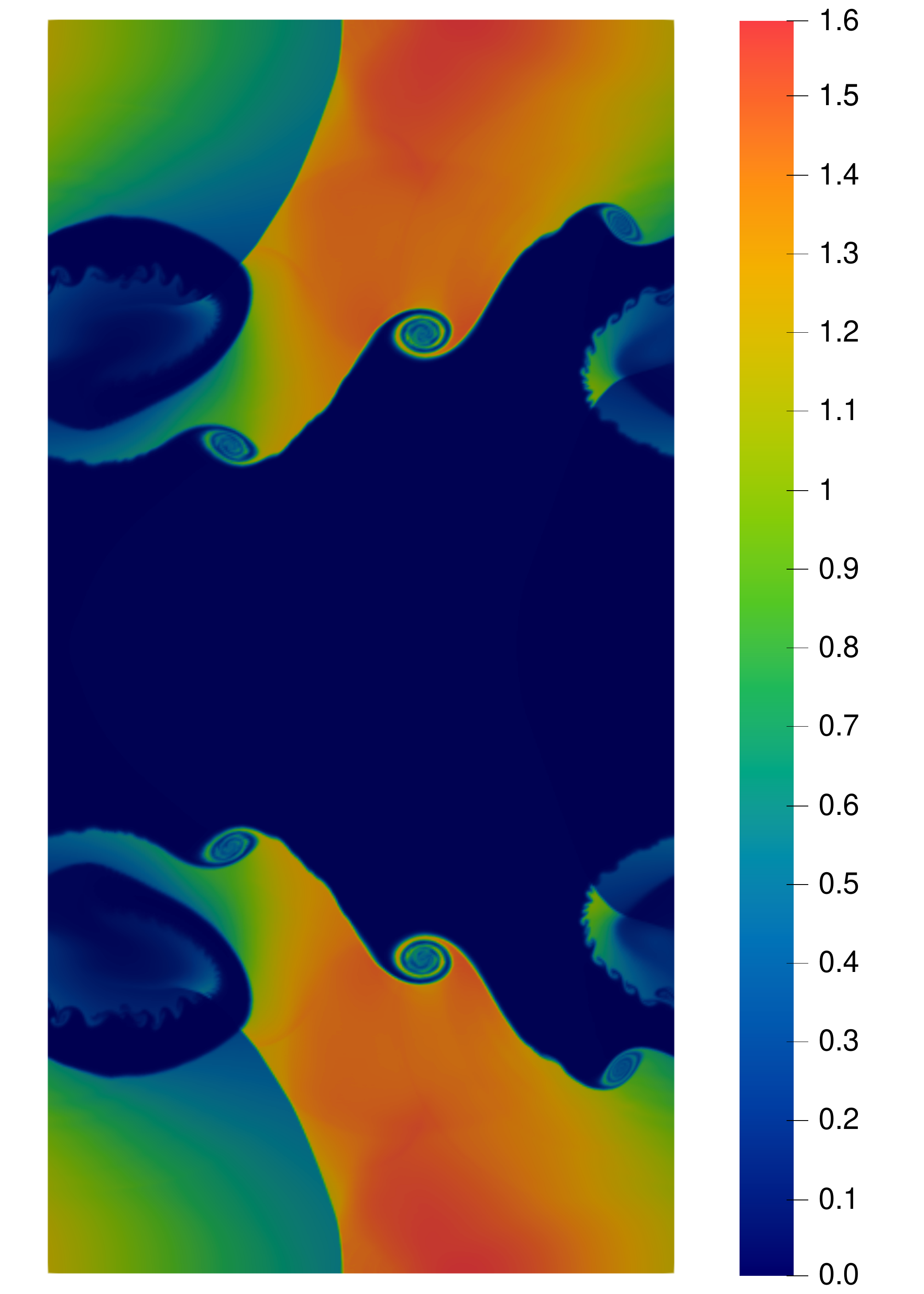}
         \caption{Using the indicator model from~\cite{hennemann2021provably}.}
    \end{subfigure}
    \caption{Kelvin-Helmholtz instability: Plot of density using $320\times 640$ cells.}
    \label{KH_instability}
\end{figure}
 \subsubsection{Kelvin–Helmholtz instability}
 Kelvin-Helmholtz instability is a classical benchmark problem for the RHD codes, where the small circles of instabilities get formed which are very hard to capture by a diffusive scheme. Following~\cite{beckwith2011second,radice2012thc,zanotti2015high}, we take the initial fluid density as,
 \[
 \rho = \begin{cases}
     0.505 + 0.495 \tanh\left(\frac{y-0.5}{a}\right)\qquad &\text{if}\ y>0\\
     0.505 - 0.495 \tanh\left(\frac{y+0.5}{a}\right)\qquad &\text{if}\ y\leq 0,
 \end{cases}
 \]
 where the characteristic size, $a=0.01$. The initial fluid velocity in $x$-direction is taken as,
 \[
 v_{\rev{1}} = \begin{cases}
     v_s \tanh\left(\frac{y-0.5}{a}\right)\qquad &\text{if}\ y>0\\
     -v_s \tanh\left(\frac{y+0.5}{a}\right)\qquad &\text{if}\ y\leq0,
 \end{cases}
 \]
 where $v_s=0.5$ is the velocity at the shear layer, and the initial velocity in $y$-direction is taken with a small perturbation to trigger the small instabilities,
  \[
 v_{\rev{2}} = \begin{cases}
     \eta_0 v_s \sin(2 \pi x)\exp\left(\frac{-(y-0.5)^2}{\sigma}\right)\qquad &\text{if}\ y>0\\
     -\eta_0 v_s \sin(2 \pi x)\exp\left(\frac{-(y+0.5)^2}{\sigma}\right)\qquad &\text{if}\ y\leq0,
 \end{cases}
 \]
 where $\eta_0 = 0.1$ and $\sigma = 0.1$ are the amplitude and length scale of the perturbation. Pressure is taken as unity in the whole domain.

 We consider the computational domain to be $[-0.5, 0.5]\times [-1.0, 1.0]$ with periodic boundaries. We run the simulation using the \rev{scheme with degree $N=4$} till time $t=3$ with $320\times 640$ cells and a smaller $\alpha'_\text{max} = 0.25$~\eqref{eq:amax.defn} as the problem contains only shear/contact discontinuities which are linear waves. The density profile at the final time is presented in Figure~\ref{KH_instability} using both the new and the old indicator models~\cite{hennemann2021provably}, and we observe that the scheme does not lag behind in capturing the secondary instability structures along the shear layer with the new indicator model compared to the old model which was more focused on capturing small scale structures rather than controlling small scale oscillations.

\section{Summary and conclusions}\label{sec: summary}
In this work, we have designed a Jacobian-free Lax-Wendroff flux reconstruction scheme for the RHD equations. In~\cite{BABBAR2022111423}, the authors have discussed this scheme and shown the results for the Euler equations. The non-linearity of the RHD equations can lead to the formation of shocks around which solutions can be oscillatory, which have been controlled by blending the scheme with a first-order finite volume scheme on sub-cells according to a discontinuity indicator model. The strength of the blending scheme, as has been numerically demonstrated in this work, is in its accuracy improvement in comparison with the TVB limiter which loses most of the sub-cell information. However, the indicator model of~\cite{hennemann2021provably} used in~\cite{babbar2024admissibility} for the Euler equations sometimes fails to capture the irregularities in the solution of the highly non-linear RHD equations and hence we have designed a discontinuity indicator model with the help of the Legendre orthogonal basis functions which can more reliably capture the local irregularities in the solution. This indicator model can also be used in other schemes to solve different conservation laws. For the admissibility part, we have first made the element means of the solution admissible, which is achieved by blending the high-order numerical flux with that of an admissibility preserving first-order finite volume scheme defined on sub-cells; and then using a scaling limiter from~\cite{zhang2010maximum} to make the solution admissible at all solution points. But this method needs the constraints in the admissibility region to be concave functions of the conserved variables, and unfortunately, this is not the case for the RHD equations. Hence, following~\cite{wu2015high}, we have taken an equivalent admissibility region having concave constraints that are expressible in the form of conservative variables; consequently, the computational costs for checking the admissibility of the solution were reduced to a large extent.  Lastly, we have shown the numerical results \rev{using the scheme with degrees $N=3$ and $N=4$} of the RHD equations with a wide class of different initial conditions and have found that the scheme can capture all the waves in the solution effectively without producing many oscillations even for the case of strong discontinuities and having very high Lorentz factor. The admissibility preserving limiter is found to be necessary and effective in problems where the solution is close to the boundaries of the admissible region.

\section*{Acknowledgements}
The work of Sujoy Basak is supported by the Prime Minister's Research Fellowship with PMRF ID 1403239.
The work of Arpit Babbar and Praveen Chandrashekar is supported by the Department of Atomic Energy,  Government of India, under project no.~12-R\&D-TFR-5.01-0520.

\revc{\section*{Data availability}
The code supporting the findings of this work will be made publicly available at~\cite{jcp2025} along with the data used.}

\appendix

\revc{
\section{Admissibility preservation property of first-order finite volume scheme}\label{sec: appendix}
From~\cite{ryu2006equation,bhoriya2020entropy}, the expressions of eigenvalues of the flux Jacobian of the RHD equations~\eqref{RHD_equation} in $x_1$ direction  are
\begin{align}
\begin{split}
    &\lambda_1 = \frac{v_1(1-s^2) - s \Gamma^{-1} \sqrt{(1-|\mb{v}|^2 s^2)-(1-s^2)v_1^2}}{1-|\mb{v}|^2 s^2},\quad  \lambda_2=\frac{v_1(1-s^2) + s \Gamma^{-1} \sqrt{(1-|\mb{v}|^2 s^2)-(1-s^2)v_1^2}}{1-|\mb{v}|^2 s^2},\\
    &\lambda_3=\lambda_4=v_1,
\end{split}
\end{align}
where $0<s<1$ is the speed of sound. One can easily verify that the corresponding spectral radius,
\begin{equation}\label{eq:spectral_radius}
\Lambda = \max\{|\lambda_1|, |\lambda_2|, |\lambda_3|, |\lambda_4|\}    =\frac{|v_1|(1-s^2) + s \Gamma^{-1} \sqrt{(1-|\mb{v}|^2 s^2)-(1-s^2)v_1^2}}{1-|\mb{v}|^2 s^2}.
\end{equation}
Let us now prove two inequalities regarding $\Lambda$, which will be used later. From equation~\eqref{eq:spectral_radius} we have,
\begin{align}
     \Lambda  &= \frac{|v_1|(1-s^2) + s \Gamma^{-1} \sqrt{1-v_1^2-s^2(|\mb{v}|^2-v_1^2)}}{1-|\mb{v}|^2 s^2}\nonumber\\
     &> \frac{|v_1|(1-s^2) + s \Gamma^{-1} \sqrt{1-v_1^2-(|\mb{v}|^2-v_1^2)}}{1-|\mb{v}|^2 s^2}, \quad \text{since}\ 0 < s < 1,\ \Gamma > 0,\ |\mb{v}| \ge |v_1|\nonumber\\
    &=\frac{|v_1|(1-s^2) + s(1-|\mb{v}|^2)}{1-|\mb{v}|^2 s^2}.\label{eq: Gamma}
\end{align}
Again by some algebraic manipulations in equation~\eqref{eq:spectral_radius}, we have,
\begin{align}
    \Lambda^2 (1-s^2|\mb{v}|^2) -2 \Lambda |v_1| (1-s^2) +|v_1|^2(1-s^2) &= \frac{s^2}{\Gamma^2}\nonumber\\
    (1-\Lambda^2)s^2 &= \Gamma^2(\Lambda-|v_1|)^2(1-s^2)\label{eq: Lambda2}
\end{align}
and since $0<s<1$ we have the second inequality for $\Lambda$,
\begin{equation}\label{eq: Lambda3}
    \Lambda < 1.
\end{equation}
Now, we have the following expression regarding the sound speed $s$ (see equation (9) in~\cite{ryu2006equation}) for the ideal equation of state~\eqref{eq: eos},
\begin{align}
    s^2 &= \frac{\gamma (p/\rho)(\gamma - 1)}{\gamma (p/\rho)+ \gamma - 1}\label{eq: sound_speed1}\\
    s^2 &= \frac{\gamma p}{\rho h} \label{eq: sound_speed2}
\end{align}
At a solution point $(\xi_i, \eta_j)$, $0<i,j<N$ in element $\Omega_{pq}$, the low-order scheme~\eqref{eq:loworder_scheme} with the Rusanov fluxes~\eqref{eq: rusanov_flux} as the numerical fluxes can be written as,
\begin{align}
&\mb{u}_{ij}^{n+1} = \mb{u}_{ij}^n - \frac{\Delta t}{w_i \Delta x_p}[\mb{f}^{\text{NF}}(\mb{u}_{ij}, \mb{u}_{i+1,j})- \mb{f}^{\text{NF}}(\mb{u}_{i-1,j}, \mb{u}_{ij})]\nonumber 
- \frac{\Delta t}{w_j \Delta y_q}[\mb{g}^{\text{NF}}(\mb{u}_{ij}, \mb{u}_{i,j+1})- \mb{g}^{\text{NF}}(\mb{u}_{i,j-1}, \mb{u}_{ij})]\nonumber\\
    &\mb{u}_{ij}^{n+1} = A \mb{u}_{ij}^n + B_1 \mb{u}_{\Lambda_{i-\frac{1}{2},j}}^{n,x+} + B_2 \mb{u}_{\Lambda_{i+\frac{1}{2},j}}^{n,x-} + C_1 \mb{u}_{\Lambda_{i,j-\frac{1}{2}}}^{n,y+} +C_2 \mb{u}_{\Lambda_{i,j+\frac{1}{2}}}^{n,y-}    \label{eq: Rusanov scheme}
\end{align}
where the coefficients are given by,
\begin{align}\label{eq: rusanov_coefficients}
\begin{split}
    A &= \left[1-\frac{\Delta t}{2 w_i \Delta x_p}(\Lambda_{i-\frac{1}{2},j} + \Lambda_{i+\frac{1}{2},j}) -\frac{\Delta t}{2 w_j \Delta y_q}(\Lambda_{i,j-\frac{1}{2}} + \Lambda_{i,j+\frac{1}{2}})\right],\\
    B_1 &= \frac{\Delta t \Lambda_{i-\frac{1}{2},j}}{2 w_i \Delta x_p}, \qquad B_2 =  \frac{\Delta t \Lambda_{i+\frac{1}{2},j}}{2 w_i \Delta x_p},\\
    C_1 &= \frac{\Delta t \Lambda_{i,j-\frac{1}{2}}}{2 w_j \Delta y_q}, \qquad C_2= \frac{\Delta t \Lambda_{i,j+\frac{1}{2}}}{2 w_j \Delta y_q}.
\end{split}
\end{align}
Here, 
\begin{equation}\label{eq: Lambda_max}
\Lambda_{i\pm\frac{1}{2},j} = \max\big\{r\big(f'(\mb{u}_{i\pm 1,j}^n)\big), r\big(f'(\mb{u}_{ij}^n)\big)\big\},\qquad  \Lambda_{i,j\pm\frac{1}{2}} = \max\big\{r\big(g'(\mb{u}_{i,j\pm 1}^n)\big), r\big(g'(\mb{u}_{ij}^n)\big)\big\}
\end{equation}
with $r(\cdot)$ denoting the spectral radius and
\begin{align}\label{eq:directional_evolutions_first}
\begin{split}
    &\mb{u}_{\Lambda_{i-\frac{1}{2},j}}^{n,x+} = \mb{u}^n_{i-1,j} + \frac{1}{\Lambda_{i-\frac{1}{2},j}}\mb{f}(\mb{u}^n_{i-1,j}), \qquad \mb{u}_{\Lambda_{i+\frac{1}{2},j}}^{n,x-} = \mb{u}^n_{i+1,j} - \frac{1}{\Lambda_{i+\frac{1}{2},j}}\mb{f}(\mb{u}^n_{i+1,j}),\\
    &\mb{u}_{\Lambda_{i,j-\frac{1}{2}}}^{n,y+} = \mb{u}^n_{i,j-1} + \frac{1}{\Lambda_{i,j-\frac{1}{2}}}\mb{g}(\mb{u}^n_{i,j-1}), \qquad \mb{u}_{\Lambda_{i,j+\frac{1}{2}}}^{n,y-} = \mb{u}^n_{i,j+1} - \frac{1}{\Lambda_{i,j+\frac{1}{2}}}\mb{g}(\mb{u}^n_{i,j+1}).
\end{split}
\end{align}
Note that we have suppressed the superscript $L$ and the element index $e$ for notational simplicity. Now, to prove that $\mb{u}_{ij}^{n+1}$ is admissible, it is sufficient to prove that the coefficients $A,B_1,B_2,C_1, C_2$ are positive and $\mb{u}_{\Lambda_{i-\frac{1}{2},j}}^{n,x+}, \mb{u}_{\Lambda_{i+\frac{1}{2},j}}^{n,x-}, \mb{u}_{\Lambda_{i,j-\frac{1}{2}}}^{n,y+}, \mb{u}_{\Lambda_{i,j+\frac{1}{2}}}^{n,y-}$ are admissible. Then the convex combination~\eqref{eq: Rusanov scheme} will be in the admissible set $\Uad'$, as it is a convex set (Lemma~\ref{equivalent_admissibility_region}). 
 
Now to prove the admissibility of the $x$-directional quantities in equation~\eqref{eq:directional_evolutions_first}, we proceed by proving the admissibility of the quantities,
\begin{align}\label{eq:directional_evolutions_second}
\begin{split}
    &\mb{u}_{\Lambda_{i-1,j}}^{n,x+} = \mb{u}^n_{i-1,j} + \frac{1}{\Lambda_{i-1,j}}\mb{f}(\mb{u}^n_{i-1,j}), \qquad \mb{u}_{\Lambda_{i+1,j}}^{n,x-} = \mb{u}^n_{i+1,j} - \frac{1}{\Lambda_{i+1,j}}\mb{f}(\mb{u}^n_{i+1,j})
\end{split}
\end{align}
where $\Lambda_{i\pm1, j} = r\big(f'(\mb{u}_{i\pm 1,j}^n)\big)$, assuming the admissibility of $\mb{u}^n_{i\pm1,j}$ and following the idea  in~\cite{wu2017design}, where the author has proved the admissibility of similar quantities for an upper bound of the spectral radius.

For notational simplicity, we will again ignore the time and spatial indices in equation~\eqref{eq:directional_evolutions_second} and argue for $\mb{u}_{\Lambda}^{x\pm}$. Let us denote the admissibility constraints $D,q$~\eqref{eq:uad.defn} for $\mb{u}_\Lambda^{x\pm}$ by,
\begin{align}\label{eq:ad.constraint.app}
    D^{x\pm}_\Lambda = \mb{u}^{x\pm}_\Lambda[1],\qquad q^{x\pm}_\Lambda = \mb{u}_\Lambda^{x\pm}[4] - \sqrt{(\mb{u}_\Lambda^{x\pm}[1])^2 + (\mb{u}_\Lambda^{x\pm}[2])^2 + (\mb{u}_\Lambda^{x\pm}[3])^2}.
\end{align}
For the first constraint,
\[
D_\Lambda^{x\pm} = D \pm \frac{1}{\Lambda} D v_1  = D\left[1 \pm \frac{v_1}{\Lambda}\right]
            > 0,    \quad \text{since}\ \Lambda>|v_1|>0,\ D>0.
\]
Now,
\begin{align*}
    \mb{u}_\Lambda^{x\pm}[4] &= E \pm \frac{1}{\Lambda}m_1\\
        &\geq E - \frac{1}{\Lambda}|m_1|,\quad \text{since}\ \Lambda>0 \\
        &= \rho h\Gamma^2\left(1- \frac{1}{\Lambda}|v_1|\right) - p\\
        &> \rho h\Gamma^2 \left[1 - \left(\frac{|v_1|(1-s^2) + s(1-|\mb{v}|^2)}{1-|\mb{v}|^2 s^2} \right)^{-1} |v_1|\right] -p,\quad\ \text{using~\eqref{eq: Gamma}}\\
        &= \rho h\Gamma^2 \left[\frac{s (1-|\mb{v}|^2) (1 - |v_1| s)}{|v_1| (1-s^2) + s (1-|\mb{v}|^2)} \right] -p\\
        &\geq \rho h\Gamma^2 \left[\frac{s (1-|\mb{v}|^2) (1 - |v_1| s)}{|v_1| (1-s^2) + s (1-v_1^2)} \right] -p,\quad\ \text{since}\ |v_1|<|\mb{v}|<1,\ 0<s<1\  \text{and}\ \rho, h >0\\
        &=\rho h\Gamma^2 \left[\frac{s(1-|\mb{v}|^2)}{|v_1|+s} \right] -p\\
        &\geq \rho h \left(\frac{s}{1+s}\right) - p,\quad \ \text{since}\ |v_1|<1\ \text{and}\ \rho, h, s >0\\
        &= \left(\frac{\gamma p}{s^2}\right) \left(\frac{s}{1+s}\right) -p,\quad\ \text{using~\eqref{eq: sound_speed2}}\\
        &> \frac{\gamma p}{\sqrt{\gamma -1} + \gamma - 1} - p,\quad\ \text{since}\ s<\sqrt{\gamma -1},\quad \text{from~\eqref{eq: sound_speed1}}\\
        &\geq 0,\quad\ \text{since}\ \gamma\in(1,2]\ \text{and}\ p>0.
\end{align*}
Hence showing $(\mb{u}_\Lambda^{x\pm}[1])^2 + (\mb{u}_\Lambda^{x\pm}[2])^2 + (\mb{u}_\Lambda^{x\pm}[3])^2 - (\mb{u}_\Lambda^{x\pm}[4])^2 < 0$ is sufficient to show $q_\Lambda^{x\pm} > 0$~\eqref{eq:ad.constraint.app}. From equation~(6) of~\cite{wu2016physical} we have,
\[
    h \geq \sqrt{1 + \frac{p^2}{\rho^2}} + \frac{p}{\rho}
\]
and after some algebraic manipulations we get,
\begin{align}\label{eq:weak_taub2}
    (\rho h - p)^2 \geq (\rho^2 + p^2)
\end{align}
which will be used later. Now,
\begin{align*}
    & (\mb{u}_\Lambda^{x\pm}[1])^2 + (\mb{u}_\Lambda^{x\pm}[2])^2 + (\mb{u}_\Lambda^{x\pm}[3])^2 - (\mb{u}_\Lambda^{x\pm}[4])^2\\
    &=\left(D\pm \frac{1}{\Lambda}D v_1\right)^2 + \left(m_1 \pm \frac{1}{\Lambda}(m_1v_1 +p)\right)^2 +\left(m_2 \pm \frac{1}{\Lambda}m_2v_1\right)^2 - \left(E\pm \frac{1}{\Lambda}m_1\right)^2\\
    &= \left(1\pm \frac{v_1}{\Lambda}\right)^2 \Gamma^2 \left(\rho^2 + p^2 - (\rho h - p)^2\right) + p^2 \left(\frac{1}{\Lambda^2}-1\right)\\
    &\leq \left(1- \frac{|v_1|}{\Lambda}\right)^2 \Gamma^2 \left(\rho^2 + p^2 - (\rho h - p)^2\right) + p^2 \left(\frac{1}{\Lambda^2}-1\right),\quad \text{by}~\eqref{eq:weak_taub2}\\
    &= \left(\frac{1}{\Lambda^2}-1\right) \left(\frac{s^2}{1-s^2}\right) \left(\rho^2 + p^2 - (\rho h - p)^2\right) + p^2 \left(\frac{1}{\Lambda^2}-1\right), \quad \text{using~\eqref{eq: Lambda2}}\\
    &= \left(\frac{1}{\Lambda^2}-1\right) \left(\frac{1}{1-s^2}\right) \left[s^2\left(\rho^2 - (\rho h - p)^2\right) + p^2 \right]\\
    &= \left(\frac{1}{\Lambda^2}-1\right) \left(\frac{1}{1-s^2}\right) \left(\frac{p^2}{\gamma - 1}\right)\left[\gamma - 1 -s^2 \left(\frac{1}{\gamma -1}+\frac{2\rho}{p}\right)\right],\quad \text{using}\ \eqref{eq: eos}\\
    &\leq \left(\frac{1}{\Lambda^2}-1\right) \left(\frac{1}{1-s^2}\right) \left(\frac{p^2}{\gamma - 1}\right)\left[1 - s^2\left( \frac{1}{\gamma -1}+ \frac{2\rho}{p}\right)\right],\quad\ \text{since}\ \gamma\in(1,2],\ s<1,\ \Lambda < 1\ \text{and using}\ \eqref{eq: Lambda3}\\
    &= \left(\frac{1}{\Lambda^2}-1\right) \left(\frac{1}{1-s^2}\right) \left(\frac{p^2}{\gamma - 1}\right)\left[\frac{1-2\gamma}{h}\right]\quad\text{using}~\eqref{eq: sound_speed2},\ \eqref{eq: eos}\\
    &< 0,\quad\ \text{since}\ \Lambda < 1,\ s<1,\ h>0\ \text{and}\ \gamma\in(1,2].
\end{align*}
Hence we have $q_\Lambda^{x\pm}>0$ and $D_\Lambda^{x\pm}>0$~\eqref{eq:ad.constraint.app} and so $\mb{u}_\Lambda^{x\pm}\in \Uad'$, that is, we have the admissibility of the $x$-directional quantities in equation~\eqref{eq:directional_evolutions_second}. Now, we will prove a lemma.
\begin{lemma}\label{lemma: big_lam}
    If $\mb{u}_\Lambda^{x\pm} \in \Uad'$ for some $\Lambda> 0$ then $\mb{u}_{\Lambda'}^{x\pm} \in \Uad'$ for all $\Lambda'>\Lambda$.
\end{lemma}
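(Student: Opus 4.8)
The plan is to recognize $\mb{u}_{\Lambda'}^{x\pm}$ as a convex combination of the underlying state $\mb{u}$ and the admissible point $\mb{u}_\Lambda^{x\pm}$, after which the conclusion follows immediately from the convexity of $\Uad'$ established in Lemma~\ref{equivalent_admissibility_region}.

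First I would rewrite the defining relation $\mb{u}_\Lambda^{x\pm} = \mb{u} \pm \frac{1}{\Lambda}\mb{f}(\mb{u})$ as $\pm\mb{f}(\mb{u}) = \Lambda\big(\mb{u}_\Lambda^{x\pm} - \mb{u}\big)$, and substitute this into the analogous expression for $\Lambda'$. This yields
\[
\mb{u}_{\Lambda'}^{x\pm} = \mb{u} \pm \frac{1}{\Lambda'}\mb{f}(\mb{u}) = \mb{u} + \frac{\Lambda}{\Lambda'}\big(\mb{u}_\Lambda^{x\pm} - \mb{u}\big) = \Big(1-\frac{\Lambda}{\Lambda'}\Big)\mb{u} + \frac{\Lambda}{\Lambda'}\,\mb{u}_\Lambda^{x\pm}.
\]
Writing $t = \Lambda/\Lambda'$, the hypothesis $\Lambda' > \Lambda > 0$ forces $t \in (0,1)$, so the right-hand side is a genuine convex combination of the two states $\mb{u}$ and $\mb{u}_\Lambda^{x\pm}$.

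The remaining step is to observe that both endpoints of this combination lie in $\Uad'$: the base state $\mb{u}$ is admissible by the standing assumption on the solution values under which the directional quantities~\eqref{eq:directional_evolutions_second} are constructed, while $\mb{u}_\Lambda^{x\pm} \in \Uad'$ is precisely the hypothesis of the lemma. Since $\Uad'$ is convex, the convex combination $\mb{u}_{\Lambda'}^{x\pm}$ must itself belong to $\Uad'$, completing the argument.

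There is essentially no obstacle here; the entire substance of the proof is the observation that increasing $\Lambda$ shrinks the perturbation $\frac{1}{\Lambda}\mb{f}(\mb{u})$ and thereby moves the perturbed state linearly back towards the admissible base state $\mb{u}$. The only point meriting attention is ensuring that $\mb{u}$ itself is admissible, which holds throughout because these quantities are only ever formed from admissible cell values; this is the same blanket assumption already used in the admissibility argument preceding the lemma.
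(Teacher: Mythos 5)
Your proof is correct and takes essentially the same route as the paper: the identical rewriting $\pm\mb{f}(\mb{u}) = \Lambda\big(\mb{u}_\Lambda^{x\pm}-\mb{u}\big)$ yields the same convex combination $\mb{u}_{\Lambda'}^{x\pm} = \big(1-\frac{\Lambda}{\Lambda'}\big)\mb{u} + \frac{\Lambda}{\Lambda'}\mb{u}_\Lambda^{x\pm}$, after which admissibility follows from convexity of $\Uad'$ (the paper cites Lemma 3.3 of~\cite{wu2016physical} for this step, while you invoke the paper's own Lemma~\ref{equivalent_admissibility_region}; the content is the same). Your explicit attention to the admissibility of the base state $\mb{u}$, left implicit in the paper, is sound; note it also follows from the lemma's hypothesis alone, since $\mb{u} = \frac{1}{2}\big(\mb{u}_\Lambda^{x+}+\mb{u}_\Lambda^{x-}\big)$ is itself a convex combination of the two admissible states.
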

\begin{proof}
\begin{align*}
    \mb{u}_{\Lambda'}^{x\pm} &=\mb{u} \pm \frac{1}{\Lambda'} \mb{f}(\mb{u})\\
    &= \mb{u} \pm \frac{1}{\Lambda'} [\pm (\Lambda \mb{u}_\Lambda^{x\pm} - \Lambda \mb{u})]\\
    &= \mb{u} + \frac{1}{\Lambda'} (\Lambda \mb{u}_\Lambda^{x\pm} - \Lambda \mb{u})\\
    &= \left(1 - \frac{\Lambda}{\Lambda'}\right)\mb{u} + \frac{\Lambda}{\Lambda'}\mb{u}_\Lambda^{x\pm}
\end{align*}
Now using Lemma 3.3 of~\cite{wu2016physical}, we can see that $\mb{u}_{\Lambda'}^{x\pm} \in \Uad'$, since $0<\frac{\Lambda}{\Lambda'}<1$.
\end{proof}
Finally using Lemma~\ref{lemma: big_lam} and equation~\eqref{eq: Lambda_max},
\begin{align*}
    \mb{u}_{\Lambda_{i-\frac{1}{2},j}}^{n,x+}, \mb{u}_{\Lambda_{i+\frac{1}{2},j}}^{n,x-} \in \Uad'.
\end{align*}
That is, we have the admissibility of the $x$-directional quantities in equation~\eqref{eq:directional_evolutions_first}.

Similarly we can show for the admissibility of the $y$-directional quantities in equation~\eqref{eq:directional_evolutions_first}, that is
\[
\mb{u}_{\Lambda_{i,j-\frac{1}{2}}}^{n,y+}, \mb{u}_{\Lambda_{i,j+\frac{1}{2},j}}^{n,y-}\in \Uad'. 
\]
Now note that in equation~\eqref{eq: rusanov_coefficients}, $B_1$, $B_2$, $C_1$, and $C_2$ are always positive, since the quadrature weights and the spectral radii of the flux Jacobians are positive. Thus, the final theorem regarding the admissibility of the first-order scheme is as follows.
\begin{theorem}\label{theorem: admissility_rusanov_scheme}
    The numerical solution $\mb{u}_{ij}^{n+1}$ of the RHD equations~\eqref{RHD_equation}, at time $t_{n+1}$ and solution point $(\xi_i,\eta_j),\ 0<i,j<N$ using a first-order finite volume scheme with the Rusanov flux~\eqref{eq: rusanov_flux} belongs to the admissible region $\Uad'$ under the CFL type condition,
    \[
        \Delta t \left[\frac{\Lambda_{i-\frac{1}{2},j}+\Lambda_{i+\frac{1}{2},j}}{2 w_i \Delta x_p} + \frac{\Lambda_{i,j-\frac{1}{2}}+\Lambda_{i,j+\frac{1}{2}}}{2 w_j \Delta y_q}\right] < 1
    \]
    assuming $\mb{u}_{ij}^{n}, \mb{u}_{i\pm 1,j}^{n}, \mb{u}_{i,j\pm 1}^{n} \in \Uad'$. Here $w_i$'s are the corresponding quadrature weights.
\end{theorem}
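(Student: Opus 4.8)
The plan is to realize the updated value $\mb{u}_{ij}^{n+1}$ as a convex combination of states that are each already known to be admissible, and then to appeal to the convexity of $\Uad'$ from Lemma~\ref{equivalent_admissibility_region}. The rewriting~\eqref{eq: Rusanov scheme} of the Rusanov update in terms of the central state $\mb{u}_{ij}^n$ and the four directional states $\mb{u}_{\Lambda_{i-\frac{1}{2},j}}^{n,x+}$, $\mb{u}_{\Lambda_{i+\frac{1}{2},j}}^{n,x-}$, $\mb{u}_{\Lambda_{i,j-\frac{1}{2}}}^{n,y+}$, $\mb{u}_{\Lambda_{i,j+\frac{1}{2}}}^{n,y-}$ already supplies the candidate combination; what remains is to check that its coefficients~\eqref{eq: rusanov_coefficients} are nonnegative and sum to one, and that each of the five states is admissible.

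First I would verify that the coefficients sum to one, which is immediate from the expressions in~\eqref{eq: rusanov_coefficients}, and that $B_1,B_2,C_1,C_2>0$, which holds because the quadrature weights $w_i,w_j$ and the interface spectral radii are strictly positive. The only coefficient whose sign is not automatic is $A$, and its nonnegativity is equivalent to precisely the stated CFL-type bound on $\Delta t$; this hypothesis is therefore exactly what is needed to render the combination convex.

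Next I would invoke the admissibility of the building-block states. The central state $\mb{u}_{ij}^n$ is admissible by hypothesis. For the directional states I would use the estimates already established above: the single-cell states~\eqref{eq:directional_evolutions_second} satisfy $D_\Lambda^{x\pm}>0$ and $q_\Lambda^{x\pm}>0$, and Lemma~\ref{lemma: big_lam} upgrades this to the interface states in~\eqref{eq:directional_evolutions_first}, whose spectral radii~\eqref{eq: Lambda_max} are maxima and hence no smaller than the single-cell values. The analogous $y$-directional statements follow symmetrically. With all five states in $\Uad'$ and nonnegative coefficients summing to one, convexity of $\Uad'$ closes the argument.

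The genuine difficulty of the whole development lies in the preliminary estimate that $q_\Lambda^{x\pm}>0$, not in the final assembly. Establishing it requires the sharp lower bound~\eqref{eq: Gamma} on the spectral radius relative to $|v_1|$, the identity~\eqref{eq: Lambda2} tying $\Lambda$, the sound speed $s$, and $\Gamma$ together, the bound $\Lambda<1$ from~\eqref{eq: Lambda3}, and the Taub-type inequality~\eqref{eq:weak_taub2}, with the restriction $\gamma\in(1,2]$ entering decisively at the end to force the quadratic form $(\mb{u}_\Lambda^{x\pm}[1])^2+(\mb{u}_\Lambda^{x\pm}[2])^2+(\mb{u}_\Lambda^{x\pm}[3])^2-(\mb{u}_\Lambda^{x\pm}[4])^2$ to be negative. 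Once those relativistic algebraic inequalities are controlled cleanly, the theorem itself is a short convexity argument, and the CFL condition enters only through the single requirement $A\ge 0$.
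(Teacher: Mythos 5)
Your proposal is correct and follows essentially the same route as the paper: the convex-combination rewriting~\eqref{eq: Rusanov scheme} with coefficients~\eqref{eq: rusanov_coefficients}, positivity of $B_1,B_2,C_1,C_2$ plus the CFL condition giving $A\ge 0$, admissibility of the directional states via the estimates $D_\Lambda^{x\pm}>0$, $q_\Lambda^{x\pm}>0$ upgraded by Lemma~\ref{lemma: big_lam}, and finally convexity of $\Uad'$ from Lemma~\ref{equivalent_admissibility_region}. You also correctly locate the real difficulty in the preliminary inequality $q_\Lambda^{x\pm}>0$ (via~\eqref{eq: Gamma},~\eqref{eq: Lambda2},~\eqref{eq: Lambda3},~\eqref{eq:weak_taub2} and $\gamma\in(1,2]$) rather than in the final assembly, which is exactly how the paper's argument is structured.
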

\begin{remark}\label{remark: admissility_rusanov_scheme}
    In the low-order evolutions in equation~\eqref{eq: directional low-order evolutions}, the elements in the Rusanov flux are taken from the neighboring elements as well. Hence, considering the solutions at time $t_n$ are admissible in element $\Omega_{p,q}$ as well as in the neighboring elements, we can show in a similar way that the $x$-directional evolutions in equation~\eqref{eq: directional low-order evolutions} are admissible under a CFL type condition,
    \[
        \Delta t < \min\left\{\left(\frac{\Lambda_{p-\frac{1}{2},0}+\Lambda_{\frac{1}{2},0}^{pq}}{2 k_x w_0 \Delta x_p} \right)^{-1}, \left(\frac{\Lambda_{p-\frac{1}{2},0}+\Lambda_{N-\frac{1}{2},0}^{p-1,q}}{2 k_x w_N \Delta x_{p-1}} \right)^{-1} \right\}
    \]
    and the $y$-directional evolutions are admissible assuming,
    \[
        \Delta t < \min\left\{\left(\frac{\Lambda_{0,q-\frac{1}{2}}+\Lambda_{0,\frac{1}{2}}^{pq}}{2 k_y w_0 \Delta y_q} \right)^{-1}, \left(\frac{\Lambda_{0,q-\frac{1}{2}}+\Lambda_{0,N-\frac{1}{2}}^{p,q-1}}{2 k_y w_N \Delta y_{q-1}} \right)^{-1} \right\}
    \]
    where 
    \[
    \Lambda_{p-\frac{1}{2},0}= \max\left\{r\Big(\mb{f}'\big(\mb{u}_{N,0}^{p-1,q}\big)\Big), r\Big(\mb{f}'\big(\mb{u}_{0,0}^{p,q}\big)\Big)\right\}, \quad 
    \Lambda_{0,q-\frac{1}{2}}= \max\left\{r\Big(\mb{g}'\big(\mb{u}_{0,N}^{p,q-1}\big)\Big), r\Big(\mb{g}'\big(\mb{u}_{0,0}^{p,q}\big)\Big)\right\}
    \]
    and the superscripts $(\cdot)^{pq}$ denote the element indices.
\end{remark}
}

\bibliographystyle{abbrv}
\bibliography{reference}
\end{document}